\newcommand{\e}{\epsilon}
\newcommand{\RR}{{\mathbb R}}
\newcommand{\CC}{{\mathbb C}}
\newcommand{\CI}{{C^\infty}}
\newcommand{\TT}{{\mathbb T}}
\title{An introduction to microlocal complex deformations}
\author{Jeffrey Galkowski}
\email{j.galkowski@ucl.ac.uk}
\address{Department of Mathematics, University College London, WC1H 0AY, UK}
\author{Maciej Zworski} 
\email{zworski@math.berkeley.edu}
\address{Department of Mathematics, University of California, Berkeley, CA 94720}
\begin{document}


\begin{abstract}
In this expository article we relate the presentation of weighted estimates in \cite{M} to the Bergman kernel approach of \cite{Sj96}. It is meant as an introduction to the Helffer--Sj\"ostrand theory
\cite{HS} in the simplest setting and to its adaptations to compact 
manifolds \cite{Sj96}, \cite{GZ}.
\end{abstract}

\maketitle

\section{Introduction}
\label{s:intr}

Suppose that $ P $ is a semiclassical differential operator (or  a pseudodifferential operator, see 
\eqref{eq:pseudo1}), for instance, 
\begin{equation}
\label{eq:Schr} P = - h^2 \Delta + V . 
\end{equation}
Conjugation by exponential weights has a very long tradition going back to the
origins of Carleman and Agmon--Lithner estimates:
\begin{equation}
\label{eq:Pphi0}  P_\varphi := e^{ \varphi (x)/h} P e^{ - \varphi ( x)/h}, \ \ \varphi \in C^\infty ( \RR^n ; \RR ) , 
\end{equation}
which in the case of \eqref{eq:Schr} gives (with $ D_x = -i \partial_x $, 
and $ v^2 := v_1^2 + \cdots + v_n^2 $, $ v\in \CC^n $)
\[\begin{split}  P_\varphi  & = ( h D_x + i \nabla \varphi )^2 + V ( x )
\\
&  = -h^2 \Delta +  \nabla \varphi \cdot h \nabla 
 +  V - |\nabla \varphi |^2  + h \Delta \varphi  .
 \end{split}  \]
Roughly speaking, exploiting the sign of $ V - | \nabla \varphi |^2 $
leads to exponential decay (tunneling) estimates for solutions of
$ P u = 0 $ -- see for instance \cite[\S 7.1]{zw} and references given there.
For exponential lower bounds (quanitative unique continuation, from the mathematical point of view), one exploits positivity properties
of $ [ P_\varphi^* , P_\varphi ] $
of suitably chosen $ \varphi$ using the identity (with $ L^2 $ norms and
$ u \in C^\infty_{\rm{c}} $)
\begin{equation}
\label{eq:PphiS}    \| P_\varphi u \|^2 = \| P_\varphi^* u \|^2 + \langle [ P_\varphi^* , P_\varphi ] u, u \rangle 
 \geq \langle [ P_\varphi^* , P_\varphi ] u, u \rangle, 
\end{equation}
see for instance \cite[\S 7.2]{zw}. 

On the other hand conjugation \eqref{eq:Pphi0}
with $ \varphi ( x ) $ replaced by $ i \varphi ( x ) $ gives the simplest 
case of Egorov's Theorem (see for instance \cite[Theorem 11.1]{zw}):
\[ P_{ i \varphi } = ( h D_x - \nabla \varphi ( x) )^2 + V ( x ) , \]
which corresponds to the pull back of the symbol by the canonical transformation
$ ( x, \xi) \mapsto ( x, \xi - \nabla \varphi ( x ) ) $ associated
to the operator $ u ( x ) \mapsto e^{ - \frac i h \varphi( x) } u ( x) $.

{When $\varphi$ is real, we have implicitly used analyticity of $\xi\to\xi^2$ to obtain}  \eqref{eq:PphiS}. If we formally conjugate \eqref{eq:Schr} with $ V ( x ) = x^2 $ by 
$ e^{ \varphi ( h D ) / h } $ we obtain
\[   e^{ \varphi ( h D ) / h } P e^{ - \varphi ( h D ) / h }
= - h^2 \Delta + V ( x - i \nabla \varphi ( h D ) ) , \]
where we used the analyticity of $ V ( x ) = x^2 $. In general, we  encounter problems akin to flowing the heat equation backwards which again requires analyticy.

In many problems it is advantageous to use $ \varphi = G ( x, 
h D_x ) $ but, as the discussion above shows, {the use of such weights}  requires analyticity 
assumptions (unless we use weights of moderate growth in $ h $ and
$ \xi $ -- see \cite[\S 8.2]{zw} for a textbook discussion 
 and Faure--Sj\"ostrand \cite{FaS}, Dyatlov--Zworski \cite{DZ} for recent applications and references). 
 
 The use of strong microlocal weights ($ \varphi$ in some sense equal to 
 $ G ( x, h D ) $) has been raised to the level of high art by 
 Sj\"ostrand and his collaborators -- see for instance Hitrik--Sj\"ostrand \cite{HiS} and 
 references given there. Here we would like to concentrate on the approach 
 motivated by scattering resonances and introduced by Helffer--Sj\"ostrand \cite{HS}.
 
\renewcommand\thefootnote{\dag}%
 
The goal then is to justify the statement
\begin{equation}
\label{eq:GPG}  
\begin{split} 
 e^{ - G( x , h D ) / h } P ( x, h D ) e^{  G ( x, h D ) / h} 
& \sim P ( x + i \nabla_\xi G ( x, h D ) , h D_x - i \nabla_x G ( x, h D ) ) \\
& = P ( x , h D_x ) - i H_P G ( x , h D ) + \mathcal O ( \| G \|^2_{C^2 } ) ,\end{split} \end{equation}
and then to exploit the possible gain of ellipticity for the right hand side. In particular, the property $ H_P G ( x, \xi ) > 0 $ can be used
to great advantage. Here 
\[  H_P  := \sum_{j=1}^n \partial_{\xi_j} P ( x, \xi ) \partial_{x_j }
- \partial_{ x_j } P ( x, \xi ) \partial_{\xi_j } ,\]
is the Hamilton vector field of the symbol of $ P $ and 
$  \mathcal O ( \| G \|^2_{C^2 } )  $ means a norm bound between suitable
spaces, for instance $ L^2 \to L^2 $ if $ P $ is order $ 0 $ and $ G $ of
order $ 1 $.
The condition $ H_P G > 0 $ and its weaker forms
are called the {\em escape function} property or the {\em positive commutator} property.
 
One tool for justifying \eqref{eq:GPG} is the FBI transform\footnote{It is named after Fourier--Bros--Iagolnitzer and this name 
is used for its
generalizations in microlocal analysis. In specific cases, and in other
fields it is called Bargmann, Segal, Gabor, and wave packet transform.} -- 
see \eqref{eq:defbi} and \cite{HiS}, \cite{M}, \cite[Chapter 13]{zw} for three introductions. Roughly speaking it turns the action of the operator $ P$  to multiplication by its symbol (say, $ \xi^2 + V ( x ) $, 
in the case of \eqref{eq:Schr}). When weights are introduced, 
this action turns into multiplication by the ``deformed symbol".
That is, roughly speaking, the symbol of the operator on 
right hand side of \eqref{eq:GPG}.

\renewcommand\thefootnote{\ddag}%

Here we will present the simplest case of small (in $ C^2 $) compactly 
supported weights $ G $. A very clear presentation (without the smallness assumption) following
Nakamura \cite{N} is provided in Martinez \cite[\S 3.5]{M} but our goal is to make {\em simple things complicated} by explaining the theory in the 
way which adapts to the case of stronger (non-compactly supported) weights
used in \cite{HS} and to the case of compactly supported weights on 
compact manifolds of \cite{Sj96}. Our motivation comes from the study of 
viscosity limits for 0th order (analytic) pseudodifferential operators \cite{GZ}. It partly justifies claims made in the physics literature, see for instance 
\cite{Rieu}\footnote{Specifically, the claim that 
``The aim of this paper is to present what we believe to be the asymptotic limit of inertial modes in a spherical shell when viscosity tends to zero." These viscosity limits are essentially the {\em resonances} of zero order operators and hence it is natural to use the methods of
\cite{HS}. Except in simplest cases the methods based on spacial deformations in the spirit of complex scaling -- see \cite[\S 4.5, \S 4.7]{dizzy} and references given there -- are not sufficient.}.

A properly interpreted version of \eqref{eq:GPG} is given 
in Theorem \ref{t:2} which comes in this form from \cite{N}, \cite{M}.
The proof however follows the strategy of \cite{Sj96} and is based
on the study of orthogonal projections onto weighted spaces of (essentially) holomorphic functions. Theorem \ref{t:3} presents a more geometric version more directly in the spirit of \cite{Sj96}.

Our exposition of this material is structured as follows:

\begin{itemize}

\item In \S \ref{s:FBI} we review the properties of the FBI (Bargmann/Segal/Gabor/wave packet) transform and the structure of pseudodifferential operator on the FBI transform side. No (non-quadratic) weights enter here but the simple geometric structure discussed in \S \ref{s:geop}
provides a guide for more complicated constructions.

\item \S \ref{s:we} is devoted to the description of the projector onto the image of the FBI transform, orthogonal with respect to the norm on 
$ L^2 ( T^* \RR^n , e^{-\varphi(x,\xi)/h} dx d\xi ) $. That follows the approach of \cite{Sj96} which in turn is inspired by \cite{BS}, \cite{BG} and \cite{HS}. The description of the action of {\em analytic}
pseudodifferential operators on those spaces is then given in 
Theorem \ref{t:2} in \S \ref{s:psew}. 

\item \S \ref{s:anex} reviews some aspects of the  analytic machinery of Melin--Sj\"ostrand \cite{mess} which is needed for the more geometric approach to the justification of \eqref{eq:GPG} in 
\S\S  \ref{s:projdefo},\ref{s:pseudoL}. 

\item A more geometric version, following the spirit of \cite{HS} and
\cite{Sj96}, is presented in \S \ref{s:projdefo}. Instead of putting in  a weight, the phase space $ T^* \RR^n $ is deformed to 
$ \Lambda := \{ ( x + i G_x ( x, \xi ) , \xi - i G_\xi ( x, \xi ) ):
( x, \xi ) \in T^* \RR^n \} $ (note the analogy with the right hand side
\eqref{eq:GPG}; $ \Lambda $ is always Lagrangian with respect to 
$ \Im d\zeta \wedge dz $ on $ T^* \CC^n $ and symplectic for 
$ \Re d \zeta \wedge dz $ for $ G $ sufficiently small). That corresponds to continuing the FBI transform analytically and then restricting it to $ \Lambda $. The action of an
{\em analytic} pseudodifferential operator with symbol $ p$ on that space is (in some sense) close to multiplication by $ p|_\Lambda $ -- see
Theorem \ref{t:3}. That is again achieved by constructing an appropriate orthogonal projector.

\item Finally, in \S \ref{s:wvsd} we discuss the equivalence of the two approaches by showing that each deformation $ \Lambda $ corresponds
to putting in a weight without a deformation -- see Theorem \ref{th:G2ph}.

\end{itemize}

\medskip\noindent\textbf{Acknowledgements.}
We would like to thank Semyon Dyatlov for many enlightening discussions and 
Johannes Sj\"ostrand for helpful comments on the first version of this article. 
Partial support by the National Science Foundation grants
DMS-1900434 and DMS-1502661 (JG) and 
 DMS-1500852 (MZ) is also
gratefully acknowledged.

\section{The FBI transform}
\label{s:FBI}

We define the usual FBI transform:
\begin{equation}
\label{eq:defbi} 
\begin{gathered}
T u ( x , \xi ) := c h^{-\frac {3n}4} \int_{\RR^n} e^{ \frac i h  (
\langle x - y , \xi \rangle + \tfrac i 2  ( x - y )^2 ) } 
u ( y ) d y .
\end{gathered}
\end{equation}

Then $ T : L^2 ( \RR^n ) \to L^2 ( T^* \RR^n ) $ is an isometry 
as is easily checked using Plancherel's formula -- see for instance \cite[Step 2 of the proof Theorem 13.7]{zw}. 
We then notice that  
\begin{equation}
\label{eq:imag}
\begin{gathered}
T ( L^2 ( \RR^n ) ) \subset \mathscr H := \{ u \in L^2 ( T^* \RR^n ) : 
Z_j u = 0 , \ j = 1, \cdots n  \}, \\
\zeta_j ( x, \xi, x^* , \xi^*) := x^*_j - \xi_j - i \xi^*_j , \\
Z_j := \zeta_j ( x,\xi, hD_x, hD_\xi ) = e^{- \xi^2/2h}{2h} {D}_{{\bar{z}}_j} e^{ \xi^2/2h } , \\
z_j = x_j - i \xi_j,\qquad {D_{\bar{z}_j}=\tfrac{1}{2i}(\partial_{x_j}-i\partial_{\xi_j}).}
\end{gathered}
\end{equation}
In fact, the range of $ T $ is exactly given by $ \mathscr H $:

\begin{prop}
\label{p:TTs}
The orthogonal projector $  \Pi_0 : L^2 ( T^* \RR^n) \to \mathscr H $ is given by 
$ \Pi_0 = T T^* $ and 
\begin{equation}
\label{eq:Pi0} \begin{gathered}
 \Pi_0 u ( \alpha ) =  h^{-n} c_0 \int_{T^* \RR^n }  e^{ \frac i h \psi_0 ( \alpha, \beta ) }  u ( \beta ) d \beta , \ \
 \alpha = ( x, \xi) ,  \ \beta = ( x', \xi' ) , \\
 \psi_0 ( \alpha, \beta ) = \tfrac12 (x\xi - x'\xi') + \tfrac12 ( x 
\xi' - \xi x') + \tfrac i 4 (x-x')^2 + \tfrac i 4 ( \xi - \xi' )^2 . 
\end{gathered}
\end{equation}
In particular, $ T ( L^2 ( \RR^n ) ) = \mathscr H $. 
\end{prop}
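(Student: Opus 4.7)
The plan is to exploit $T$ being an isometry, which gives $T^*T = I_{L^2(\RR^n)}$ and hence makes $\Pi_0 := TT^*$ self-adjoint and idempotent---the orthogonal projection onto $\mathrm{Range}(T)$. Combined with \eqref{eq:imag}, this already places $\mathrm{Range}(\Pi_0)$ inside $\mathscr H$; what remains is to compute the Schwartz kernel of $TT^*$ and to verify that $\mathrm{Range}(T)$ fills out all of $\mathscr H$.

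For the kernel, I would compose the Schwartz kernels of $T$ and $T^*$ (whose kernel is the complex conjugate of that of $T$). This reduces to a Gaussian integral over the intermediate variable $y \in \RR^n$. Substituting $w = y - (x+x')/2$, the $y$-quadratic $(x-y)^2 + (x'-y)^2$ decomposes as $2w^2 + \tfrac12(x-x')^2$, while the linear-in-$y$ term $\langle x-y,\xi\rangle - \langle x'-y,\xi'\rangle$ splits off an $\langle (x+x')/2,\xi-\xi'\rangle$ piece. Completing the square in $w$ and evaluating the resulting Gaussian produces a factor $(\pi h)^{n/2} e^{-(\xi-\xi')^2/(4h)}$, and collecting the remaining $(x,x',\xi,\xi')$-terms into real and imaginary parts assembles precisely the phase $\psi_0(\alpha,\beta)$ of \eqref{eq:Pi0}; the constants combine to $c_0 = |c|^2\pi^{n/2}$.

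For the identification $\mathrm{Range}(T) = \mathscr H$, it suffices to show the reproducing property $\Pi_0 v = v$ for every $v \in \mathscr H$. Since $Z_j v = 0$ is equivalent to $e^{\xi^2/(2h)} v$ being holomorphic in $z = x-i\xi$, each such $v$ has the form $v(x,\xi) = e^{-\xi^2/(2h)} F(x-i\xi)$ with $F$ entire. Substituting into the integral for $\Pi_0 v$, I would view the $\beta$-integration as an integration in $z' = x' - i\xi'$ against the Gaussian coming from the $\tfrac{i}{4}((x-x')^2 + (\xi-\xi')^2) = \tfrac{i}{4}|z-z'|^2$ part of $\psi_0$, and use Cauchy's theorem together with holomorphy of $F$ to shift contours so that the integrand becomes a standard complex Gaussian concentrated at $z' = z$; its evaluation then reproduces $v(x,\xi)$ exactly.

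The main obstacle is making the contour deformation in the last step fully rigorous, since a priori one has only $L^2$-control on $F$. A natural route is to perform the reproducing-kernel calculation on a dense subspace---for example on $v = Tu$ with $u$ Schwartz, or on $v$ whose associated $F$ is polynomial---and then extend by boundedness of $\Pi_0$ and of the identity on $\mathscr H$. Alternatively one can sidestep contour shifts by taking the $x$-Fourier transform: a short calculation gives $\widehat{Tu}(\eta,\xi) = C(h)\,\hat u(\eta)\,e^{-(\xi-\eta)^2/(2h)}$, which is Gaussian in $\xi$, and inverting this relation (using holomorphy of $F$ to verify that $\hat v(\eta,\xi)$ itself has exactly this Gaussian dependence on $\xi$) produces for each $v \in \mathscr H$ an explicit preimage $u \in L^2(\RR^n)$ with $Tu = v$.
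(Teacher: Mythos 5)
Your proposal is correct and follows essentially the route one finds in the reference the paper cites for this statement (\cite[Theorem 13.7]{zw}, the other reference being an exercise): isometry gives $T^*T=I$, hence $TT^*$ is the orthogonal projection onto $\operatorname{Range}(T)$; the kernel is obtained by a direct Gaussian integration in the intermediate variable; and surjectivity onto $\mathscr H$ is checked either via the reproducing property or, more cleanly, via the partial Fourier transform in $x$, which turns the $Z_j$ equations into an ODE in $\xi$ whose solution is a Gaussian pinned to the FBI profile. The Gaussian bookkeeping (the substitution $w=y-(x+x')/2$, the split $(x-y)^2+(x'-y)^2=2w^2+\tfrac12(x-x')^2$, the shifted contour producing $(\pi h)^{n/2}e^{-(\xi-\xi')^2/4h}$, and the resulting $c_0=|c|^2\pi^{n/2}$) is accurate and reproduces $\psi_0$. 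You also correctly identify the only delicate point (making the reproducing-kernel contour shift rigorous for $L^2$ data) and supply two valid fixes, the Fourier one being complete as stated since $(hD_{x_j}-\xi_j-ihD_{\xi_j})v=0$ transforms to $(\eta_j-\xi_j-ihD_{\xi_j})\hat v=0$, forcing $\hat v(\eta,\xi)=g(\eta)e^{-(\xi-\eta)^2/2h}$.
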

For the proof see \cite[Theorem 13.7]{zw} or \cite[Exercise 3.6.2]{M}.

\noindent{{\bf{Remark:}} Note that, using the holomorphic notation $z=x-i\xi$, $w=x'-i\xi'$, 
\begin{equation}
\label{eq:holPhase}
\psi_0= i \Big[\Phi_0(z)+\tfrac{1}{2}(z-\bar{w})^2+\Phi_0(w)\Big],\qquad \Phi_0(z):= \tfrac 12 |\Im z|^2.
\end{equation}}

\subsection{Pseudodifferential operators on the FBI side}
\label{s:psefbi}
Suppose $ P  = p( x, h D ) $,
\begin{equation}
\label{eq:pseudo1} \begin{gathered} Pu = p ( x, h D ) u := \frac 1 { ( 2\pi h )^n } 
\int_{\RR^n} \! \int_{\RR^n} p \left( x , \xi \right)
e^{ \frac i h \langle x - y , \xi \rangle } u ( y) dy d \xi , \\
| \partial^\alpha_{x,\xi} p ( x, \xi ) | \leq C_\alpha . 
\end{gathered}
\end{equation}
 is a pseudodifferential operator (with the symbol, $p \in  S (1 ) $ in the 
 terminology of \cite[Chapter 4]{zw}). 
We want to consider 
\begin{equation}
\label{eq:mathP}  \mathscr P := T P T^* : \mathscr H \to \mathscr H . \end{equation}
There are many ways to think about this operator -- see \cite[\S 13.4]{zw} for Sj\"ostrand's pseudodifferential approach. Here we look at it in the spirit of \cite{Sj96}. 

\begin{lemm}
\label{l:TPT}
The operator \eqref{eq:mathP} is given by
\[ \mathscr P u = \int_{T^* \RR^n } K_P ( \alpha, \beta ) u ( \beta ) d \beta \]
with
\begin{equation}
\label{eq:KPexp}
\begin{gathered} K_P ( \alpha, \beta ) = c_0 h^{-n} e^{ \frac i h \psi_0 ( \alpha, \beta ) } 
a( \alpha, \beta ) + \mathcal O ( h^\infty \langle 
\alpha - \beta \rangle^{-\infty} ) , \\
a ( \alpha, \beta ) \sim \sum_{ j=0}^\infty h^j a_j ( \alpha, \beta ) , \ \  | \partial_\alpha^\gamma \partial_\beta^{\gamma'} a_j ( \alpha, \beta) | \leq C_{\gamma \gamma' j} , \ \ 
a_0 ( \alpha, \alpha) = p ( \alpha ) .
\end{gathered}
\end{equation}
\end{lemm}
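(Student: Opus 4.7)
The plan is to write $K_P$ as an explicit oscillatory integral and apply complex stationary phase. Combining the formulas in \eqref{eq:defbi} for $T$, $T^*$ with the quantization \eqref{eq:pseudo1} of $P$ gives
\[
K_P(\alpha,\beta) = |c|^2 h^{-\tfrac{3n}{2}}(2\pi h)^{-n}\iiint e^{i\Psi(y,y',\eta;\alpha,\beta)/h}\, p(y,\eta)\, dy\, dy'\, d\eta,
\]
with phase $\Psi = \phi(\alpha,y) + \langle y-y',\eta\rangle - \overline{\phi(\beta,y')}$ and $\phi(\alpha,y) = \langle x-y,\xi\rangle + \tfrac{i}{2}(x-y)^2$. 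The first step is to eliminate $y'$ by an exact Gaussian integration: with $u = x'-y'$, the $u$-dependence of $i\Psi/h$ is $\tfrac{i}{h}\langle u,\eta-\xi'\rangle - \tfrac{1}{2h}u^2$, independent of $p$, and a contour shift produces $(2\pi h)^{n/2}e^{-(\eta-\xi')^2/2h}$. This reduces the kernel to the $2n$-dimensional integral
\[
K_P(\alpha,\beta) = |c|^2 (2\pi)^{-n/2} h^{-2n}\iint e^{\Phi(y,\eta;\alpha,\beta)/h}\, p(y,\eta)\, dy\, d\eta,
\]
with $\Phi = i\langle x-y,\xi\rangle - \tfrac12(x-y)^2 + i\langle y-x',\eta\rangle - \tfrac12(\eta-\xi')^2$, so that $\Re\Phi \leq 0$.

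Next, $\partial_y\Phi = \partial_\eta\Phi = 0$ admits the unique complex solution
\[
y_c = \tfrac{x+x'}{2} - \tfrac{i(\xi-\xi')}{2},\qquad \eta_c = \tfrac{\xi+\xi'}{2} + \tfrac{i(x-x')}{2},
\]
and a direct substitution yields $\Phi(y_c,\eta_c) = i\psi_0(\alpha,\beta)$ with $\psi_0$ as in \eqref{eq:Pi0}. The Hessian $\Phi''$ is constant, with real part $-I$ and $\det(-\Phi'')=2^n$, so complex stationary phase (Melin--Sj\"ostrand, or \cite[\S 3.5]{M}), applied after replacing $p$ by an almost analytic extension $\tilde p$, gives
\[
K_P(\alpha,\beta) = c_0 h^{-n} e^{i\psi_0(\alpha,\beta)/h}\, a(\alpha,\beta) + R(\alpha,\beta),\qquad c_0 = |c|^2\pi^{n/2},
\]
with $a \sim \sum_{j\geq 0} h^j a_j$, $a_j(\alpha,\beta) = L_j\tilde p(y_c,\eta_c)$ for universal differential operators $L_0 = 1, L_1, \ldots$ built from $\Phi''$. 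Since $y_c = x$, $\eta_c = \xi$ on the diagonal $\alpha = \beta$, one obtains $a_0(\alpha,\alpha) = p(\alpha)$. The uniform bounds $|\partial_\alpha^\gamma \partial_\beta^{\gamma'} a_j| \leq C_{\gamma\gamma'j}$ follow from the affine dependence of $(y_c,\eta_c)$ on $(\alpha,\beta)$ combined with $p \in S(1)$.

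For the remainder $R$, the key observation is $\Im\psi_0(\alpha,\beta) = \tfrac14|\alpha-\beta|^2$, so $|e^{i\psi_0/h}| = e^{-|\alpha-\beta|^2/4h}$. For bounded $|\alpha-\beta|$ the stationary phase error is already $\mathcal O(h^\infty)$; for large $|\alpha-\beta|$ the phase $\Phi$ is non-stationary on the support of a suitable cutoff in $(y,\eta)$, and repeated integration by parts combined with the Gaussian factor yields the full $\mathcal O(h^\infty\langle\alpha-\beta\rangle^{-\infty})$ bound. The principal obstacle is executing complex stationary phase for the merely smooth symbol $p$: since $(y_c,\eta_c)$ is genuinely complex one cannot evaluate $p$ there directly, and the $\bar\partial$-error from the almost analytic extension must be absorbed into $R$ using the quadratic coercivity of $-\Re\Phi$ away from the real locus, which is precisely the Melin--Sj\"ostrand machinery to be reviewed in \S\ref{s:anex}.
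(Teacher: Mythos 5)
Your proof is correct and follows essentially the same route as the paper: integrate out $y'$ by completing the square, reduce to a $2n$-dimensional integral over $(y,\eta)$ with complex phase, locate the critical point $(y_c,\eta_c)$, verify the critical value equals $\psi_0$, and apply complex stationary phase with an almost analytic extension of $p$, disposing of the off-diagonal remainder by non-stationary phase together with the Gaussian decay of $e^{i\psi_0/h}$. You spell out a few details that the paper elides (the explicit normalization $c_0$, the Hessian determinant, and the on-diagonal evaluation $(y_c,\eta_c)|_{\alpha=\beta}=(x,\xi)$), but the underlying argument is identical.
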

\begin{proof}
We calculate the integral kernel using, again, the completion of squares
and integration in $ y'$:
\[ \begin{split} K_P & = \frac{e^{ \frac i h (  x \xi -  x' \xi')  } }{ ( 2 \pi h )^n } 
\int_{ \RR^{3n} } e^{ \frac i h ( y ( \eta - \xi ) - y' ( \eta - \xi') + 
\frac i 2 ( x - y)^2 + \frac i 2 ( x' - y' )^2 ) } 
p \left ( { y }  , \eta \right) dy' dy d\eta \\
& = \frac{e^{ \frac i h   x \xi   } }{ ( 2 \pi h )^{\frac n 2 } }
\int_{ \RR^{3n} } e^{ \frac i h ( y ( \eta - \xi ) - x' \eta  + 
\frac i 2 ( x - y)^2 + \frac i 2 ( \xi' - \eta )^2   ) } 
p \left ( { y }  , \eta \right) dy  d\eta .
\end{split} \]
We note that the integral is now absolutely convergent and, 
denoting the phase by $ \Phi$, $ \Im \Phi \geq 0 $.
The stationary points of  $ \Phi $ are given by solving (or completing squares)
\[  \partial_\eta \Phi =  y - x' + i ( \eta - \xi' ) = 0 , \ \ 
\partial_y \Phi =  \eta - \xi  + i ( y - x ) = 0  , \]
\[  \Phi'' = \begin{bmatrix}   i I_{\RR^n} & I_{\RR^n} \\
 I_{\RR^n}  &  i I_{\RR^n}   \end{bmatrix}  \ \text{ is non-degenerate.} \]
The solutions are 
\[  y = y_c := \tfrac 12 ( x + x' ) + \tfrac i2 ( \xi'-\xi ) , \ \ \eta =
\eta_c :=  \tfrac 12 (  \xi + \xi' ) + \tfrac i2 ( x - x' ) .\]
We note that 
\[ {|\partial_\eta\Phi|^2+|\partial_y\Phi|^2} \geq \tfrac12 | x - x' |^2 + 
\tfrac 12 |\xi - \xi'|^2  . \]
Hence non-stationary phase estimate shows that if we restrict the
integration to $ | y - y_c |^2 + | x- x_c |^2 < 1 $ (using a smooth 
cut-off function), the remaining term is estimated by 
$ \mathcal O ( h^\infty \langle \alpha - \beta \rangle )$, 
$ \alpha = ( x,\xi) $, $ \beta = ( x',\xi') $. 

For the integral over the set
close to the critical points we apply 
the complex stationary phase method \cite[Theorem 2.3, p.148]{mess} to obtain \eqref{eq:KPexp} 
\begin{equation}
\label{eq:apw} 
\begin{split} a & = a_0 + ha_1 + \cdots \\
& = \widetilde p \left( \tfrac 12 ( x + x' ) + \tfrac i 2 ( \xi' - \xi ) , 
\tfrac 12 (  \xi + \xi' ) + \tfrac i 2 ( x - x' ) \right) + \mathcal O ( h ) ,
\end{split}  \end{equation}
where $ \widetilde p $ is an almost analytic extension of $ p $. 
We note that $  a_0 ( \alpha, \alpha ) = p ( \alpha )$.
\end{proof}

Also, just as for the kernel of $ \Pi_0 $, 
$ K ( \alpha , \beta ) $ has to satisfy the equations
\begin{equation}
\label{eq:Kab0}  \zeta_j ( \alpha, h D_\alpha ) K ( \alpha , \beta ) = 0 , \ \ 
\widetilde \zeta_j ( \beta, h D_\beta ) K ( \alpha, \beta ) = 0  , \ \
\widetilde \zeta_j ( \beta, \beta^* ) := \overline{{\zeta_j} ( \beta, - \beta^* )}  .\end{equation}
The last condition follows from the fact that 
\[ \begin{split} 0 & = ( \zeta_j T P^* T^* )^* u ( \alpha ) = ( \zeta_j \mathscr P^*  )^* u ( \alpha )
= \mathscr P \zeta_j^* u ( \alpha) 
  =  \int K( \alpha, \beta ) \bar \zeta_j  ( \beta, D_\beta) u ( \beta ) d\beta \\
& = \int  \left[ \bar \zeta_j ( \beta, - D_\beta) K ( \alpha, \beta ) \right] u ( \beta ) d\beta . \end{split} \]
That means that
\begin{gather}
\label{eq:eiktr0}  
\begin{gathered} \zeta_j ( \alpha, d_\alpha \psi_0 ( \alpha, \beta) ) = 0 , \ \
\widetilde \zeta_j ( \beta, d_\beta \psi_0 ( \alpha, \beta ) ) = 0 , \ \ 
\bar \partial_{z_j} a_k , \ \partial_{w_j}  a_k = \mathcal ( | \alpha - \beta|^{\infty}  ), \\ 
z = x - i \xi, \ \ w = x' - i \xi' , \ \ 
\alpha = ( x, \xi ) , \ \ \beta = ( x', \xi' ) . \end{gathered}
\end{gather} 
Of course this is satisfied in our explicit construction. Note that 
\eqref{eq:eiktr0} determines $ a_k ( \alpha, \beta ) $ uniquely from 
$ a_k ( \alpha, \alpha ) $ modulo $ \mathcal O ( |\alpha - \beta |^\infty ) $.
We can think about constructing $ a_k$'s as follows: define
\begin{equation}
\label{eq:Deltab}  \overline 
\Delta := \{ ( z , \bar z ) : z \in \CC^n \}
\subset \CC^n \times \CC^n \simeq T^* \RR^n \times T^* \RR^n , \ \ 
\CC^n \ni z = x - i \xi \in \CC^n , \end{equation}
which is a totally real subspace (see for instance \cite[13.2]{zw}).
With the variables of \eqref{eq:eiktr0},  write $ a_k ( \alpha , \beta ) = 
b_k ( z , \bar w ) $. Then $ b_k ( z , w ) $ is the almost analytic extension of $ b_k ( z, \bar z ) = b_k|_{\overline \Delta} $. 

Concerning $ \psi_0 $, it is uniquely determined from \eqref{eq:eiktr0} when we put
\begin{equation}
\label{eq:psi0d}  
\begin{gathered} ( d_\alpha \psi_0  ) ( \alpha, \alpha ) = \xi dx ,
 \ \ ( d_\beta  \psi_0  ) ( \alpha, \alpha ) = - \xi dx , \ \
\psi_0 ( \alpha, \alpha ) = 0 , \ \ \alpha = ( x, \xi ) .
\end{gathered}  \end{equation}
Note that we could just demand that $ \psi_0 ( \alpha, \alpha ) = 0 $ 
as then the derivative conditions follow from the equations. Conversely,
the derivative conditions determine $ \psi_0 $ up to an additive constant.

We can now compare $ \mathscr P $ to the Toeplitz operator 
\[ \mathscr T_p  := \Pi_0 M_p \Pi_0 , \ \ M_p u ( \alpha ) = p ( \alpha ) u ( \alpha ) .\] 
(We sometimes abuse notation and write $ p $ for $ M_p $ so that
$\mathscr T_p  := \Pi_0 p \Pi_0 $.)

Using the stationary phase method we obtain
\begin{gather*}  \mathscr T_p u ( \alpha ) = \int 
\widetilde K  ( \alpha, \beta ) u ( \beta ) 
d\beta, \ \  K ( \alpha, \beta ) = e^{\frac i h \psi_0 ( \alpha, \beta ) }
\tilde a ( \alpha, \beta ) ,  \\ \tilde a = \tilde a_0 + h \tilde a_1 + \cdots , \ 
\ \ \tilde a_0 ( \alpha , \alpha  ) = p ( \alpha )  . \end{gather*}
But the uniqueness statement means that
\[  a_0 ( \alpha , \beta ) = \tilde a_0 ( \alpha, \beta ) + \mathcal O ( 
| \alpha - \beta|^{\infty} ) . \]
This immediately gives
\begin{theo}
\label{t:1}
Suppose that $ P $ is a pseudodifferential operator \eqref{eq:pseudo1} 
and that $ \mathscr P := T P T^* $,  $\mathscr T_p := \Pi_0 p \Pi_0 $.
Then 
\[  \mathscr P = \mathscr T_p + \mathcal O ( h )_{ \mathscr H \to \mathscr H }, \]
or 
\begin{equation}
\label{eq:cofe0} \langle T P u , T v \rangle_{ L^2 ( T^* \RR^n ) } = 
\langle p T u , T v \rangle_{ L^2 ( T^* \RR^n ) } + \mathcal O ( h ) \| u \|_{L^2 ( \RR^n ) }\| v \|_{L^2 ( \RR^n )  } .\end{equation}
Also, 
\begin{equation}
\label{eq:sja}
T P  = p T  + \mathcal O ( h^{\frac12})_{L^2 ( \RR^n ) \to L^2 ( T^* \RR^n ) }  . 
\end{equation}
\end{theo}
That \eqref{eq:cofe0} holds
was first observed by Cordoba--Fefferman \cite{cof}
while \eqref{eq:sja} is an earlier result of Sj\"ostrand \cite{sja}. (Both  were formulated differently in the original versions and these are the versions from \cite{M} and \cite{Sj96}.) 

In Theorem \ref{t:2} we will see a stronger formulation which (when the
weight is $ 0 $) applies here as well. We note however that when there is no weight we can use \cite[Theorem 13.10]{zw} to obtain an explicit 
$ q $ such that $ \mathscr P = \Pi q \Pi + \mathcal O ( h^\infty )_{ L^2 \to L^2 } $.

\subsection{Geometry of $ \Pi_0 $}
\label{s:geop}
We now revisit \eqref{eq:eiktr0} and \eqref{eq:psi0d} in geometric terms.
The (quadratic) phase $ \psi_0 $ generates a complex (linear) Lagrangian relation
\begin{equation}
\label{eq:Cpsi} \mathscr C := \{ ( \alpha, d_\alpha \psi_0 ( \alpha, \beta ) ; \beta, 
- d_\beta \psi_0 ( \alpha, \beta ) ) : \alpha, \beta \in \CC^{2n} \}
\subset T^* \CC^{2n} \times T^* \CC^{2n} ,  \end{equation}
that is, a linear subspace of (complex) dimension {$4n$} on which the (holomorphic) symplectic form 
\[  \sigma_2 := \pi_L^* \sigma - \pi_R^* \sigma , \ \ 
\sigma :=  d ( \alpha^* d \alpha)  , \ \ 
\pi_L ( \rho , \rho' ) = \rho, \ \ \pi_R ( \rho, \rho' ) = \rho' , \]
vanishes. 
We note that $ \mathscr C \subset S_1 \times S_2 $ where
\begin{equation}
\label{eq:S1S2}  S_1 = \{ \rho \in T^* \CC^{2n} : \zeta_j ( \rho ) = 0 \}, \ \ 
S_2 = \{ \rho \in T^* \CC^{2n} : \bar \zeta_j ( \rho ) = 0 \}, \ \ 
\bar \zeta_j (\rho )  := \overline{ \zeta_j ( \bar \rho ) } .
\end{equation}
That is a geometric version of \eqref{eq:eiktr0}. Since $ \{ \zeta_j, \zeta_k \} 
= 0 \}$, $ S_j $ are involutive of (complex) dimension $ 3n $.
(Note that we have $ \bar\zeta_j $ rather than $ \widetilde \zeta_j $ as we have 
the usual sign switch in the definition of $ \mathscr C $.)
We also identify the symplectic subspace 
\begin{equation}
\label{eq:S1cS2}   \{  ( \rho, \rho ) : \rho \in S_1 \cap S_2 \} 
\subset \mathscr C . \end{equation}
Since 
\[ S_1 \cap S_2 = \{ ( x, \xi, \xi , 0 ) : ( x, \xi ) \in T^* \CC^n \} , \]
\eqref{eq:S1cS2} is a geometric version of \eqref{eq:psi0d}. 

We can present this more abstractly without an explicit mention of 
$ \zeta_j $'s. Thus we consider 
\begin{equation}
\label{eq:compsymp} V := T^* \CC^{m} ,  \ \ \sigma := \sum_{ j=1}^{m} d z_j^* \wedge 
d z_j , \ \ ( z , z^*) \in T^* \CC^{m} . \end{equation}
For a linear subspace of $ W \subset V $ we define the symplectic annihilator of $ W $ by 
\[ W^\sigma := \{ \rho \in V : \sigma ( \rho, V ) = 0 \}. \]
We then consider {\em involutive} subspaces of $ V $:
\begin{equation}
\label{eq:Sinv}  S \subset V , \ \ S^\sigma \subset S , \ \ \dim_{\CC } S =  2m - k . \end{equation}
The Hamiltonian foliation of $ S $ is defined by the projection
\begin{equation}
\label{eq:firstp} p : S \longrightarrow S/ S^\sigma . \end{equation}

Assume now $ S_1 $ and $ S_2 $ are two such subspaces and that 
\begin{equation}
\label{eq:S1S2tr}   \dim_{\CC} ( S_1 \cap S_2 ) = 2m - 2k , \ \  ( S_1 \cap 
S_1 )^\sigma \cap S_1 \cap S_2 = \{ 0 \} . \end{equation}
This means that $ S_1 $ and $ S_2 $ intersect transversally at a symplectic subspace and that the
 (affine) leaves of the Hamiltonian foliations through points of $ S_1 \cap S_2 $ 
also intersect transversally and we have identifications 
\[  S_1 \cap S_2 \ni \rho \longmapsto \rho + S_j^\sigma \in S_j / S_j^\sigma . \]
Composing the inverse of this map with \eqref{eq:firstp} we obtain complex linear maps
\begin{equation}
\label{eq:secondp}
\begin{gathered}
 p_j : S_j \to S_1 \cap S_2 , \\
  p_1^{-1} ( \rho ) \cap p_2^{ -1} ( \rho ) =\{ \rho \} , \ \ \rho \in 
S_1 \cap S_2 , \ \ \dim p_j^{-1} (\rho) = k.
\end{gathered}
\end{equation}

The abstract (linear) version of \eqref{eq:Cpsi} is given in 
\begin{lemm}
\label{l:relC}
Suppose that two involutive complex subspaces $ S_1$ and $ S_2 $ satisfy
\eqref{eq:S1S2tr} and that $ \mathscr C \subset S_1 \times S_2 $ is
a complex Lagrangian subspace of $ V \times V $. Then the following 
conditions are equivalent:
\begin{enumerate}
\item $ \mathscr C \circ \mathscr C = \mathscr C $,  $ \mathscr C \cap 
\left( ( S_1 \cap S_2 ) \times ( S_1 \cap S_2 ) \right) = 
\Delta ( S_1 \cap S_2 ) $; 
\item $ \mathscr C \cap 
\left( ( S_1 \cap S_2 ) \times ( S_1 \cap S_2 ) \right) = 
\Delta ( S_1 \cap S_2 ) $; 
\item $ \mathscr C := \{ ( \rho_1, \rho_2) \in S_1 \times S_2 : 
p_1 ( \rho_1 ) = p_2 ( \rho_2 ) \}   $,
\end{enumerate}
where $ p_j $ are defined in \eqref{eq:secondp} and, for $ W \subset V $, 
$ \Delta ( W ) := \{ ( \rho, \rho) : \rho \in W \} \subset V\times V$.
\end{lemm}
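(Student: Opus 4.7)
The plan is to run the cycle $(2) \Rightarrow (3) \Rightarrow (1) \Rightarrow (2)$; the last implication is nothing but one half of condition (1), so it is trivial. For $(3) \Rightarrow (1)$, I would decompose any point of the set in (3) as $\rho_j = p_j(\rho_j) + w_j$ with $w_j \in S_j^\sigma$. Because $\tau := p_j(\rho_j) \in S_j$ and $w_j \in S_j^\sigma$, the cross terms in $\sigma$ drop out, giving $\sigma(\rho_1,\rho_1') = \sigma(\tau,\tau') = \sigma(\rho_2,\rho_2')$ for any two such pairs, and hence the isotropy needed to match the Lagrangian hypothesis. The intersection clause is immediate, since $p_j$ is the identity on $S_1 \cap S_2$. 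For $\mathscr{C} \circ \mathscr{C} = \mathscr{C}$, note that any inner composition variable $\rho_2$ must lie in $S_1 \cap S_2$, so both $p_j$ become identities on it, and both inclusions follow: the reverse direction by taking the inner variable to be $p_1(\rho_1) = p_2(\rho_3)$.

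The main work is $(2) \Rightarrow (3)$. I would start with the observation that $S_1 \times S_2$ is involutive in $V \times V$ with symplectic annihilator $S_1^\sigma \times S_2^\sigma$, so any Lagrangian contained in $S_1 \times S_2$ must contain this annihilator (the standard max-isotropic argument: adding the annihilator to an isotropic subspace of an involutive space preserves isotropy). Consequently, pushing $\mathscr{C}$ forward through $\Phi := (p_1, p_2)$ yields a subspace $\mathcal{C} \subset (S_1 \cap S_2) \times (S_1 \cap S_2)$ of dimension exactly $\dim \mathscr{C} - \dim \ker \Phi|_{\mathscr{C}} = 2m - 2k$. Using the same decomposition as above, the Lagrangian property of $\mathscr{C}$ transfers to $\sigma(p_1(\rho_1), p_1(\rho_1')) = \sigma(p_2(\rho_2), p_2(\rho_2'))$, making $\mathcal{C}$ isotropic in $(S_1 \cap S_2)^2$ with the natural difference symplectic form; since $(S_1 \cap S_2)^2$ has dimension $4m - 4k$, this isotropic subspace of dimension $2m - 2k$ is Lagrangian. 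Condition (2) places $\Delta(S_1 \cap S_2)$ inside $\mathscr{C}$, and the diagonal maps onto itself under $\Phi$, so $\Delta(S_1 \cap S_2) \subset \mathcal{C}$. Being a Lagrangian of the same dimension, it must coincide with $\mathcal{C}$. Translated back, this says $p_1(\rho_1) = p_2(\rho_2)$ for every $(\rho_1, \rho_2) \in \mathscr{C}$, so $\mathscr{C}$ sits inside the set from (3); equality then follows because both have dimension $2m$.

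The main obstacle is the bookkeeping: one must simultaneously keep track of the three relevant Lagrangian dimensions ($2m$ in $V \times V$, $2m - 2k$ in $(S_1 \cap S_2)^2$, and the diagonal) and correctly identify $(S_1 \times S_2)^{\sigma_2}$ with $S_1^\sigma \times S_2^\sigma$ with the sign convention built into $\sigma_2 = \pi_L^*\sigma - \pi_R^*\sigma$. Once those pieces are in place, the argument is essentially the observation that a Lagrangian containing a Lagrangian of the same dimension equals it, applied twice, first downstairs on the symplectic base $S_1 \cap S_2$ and then upstairs on $\mathscr{C}$ itself.
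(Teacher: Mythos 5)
Your proof is correct, but it runs a different cycle and rests on a different mechanism than the paper's. The paper's argument proves $(1)\Rightarrow(2)\Rightarrow(3)\Rightarrow(1)$, and its key step is dynamical: choosing commuting defining functions $\zeta_j^i$ of $S_i$, one sees the Hamiltonian fields $H_{\zeta_j^1}\oplus 0$ and $0\oplus H_{\zeta_j^2}$ are tangent to the Lagrangian $\mathscr C$, so $\mathscr C$ is invariant under the joint flows $\Phi_t^1\oplus\Phi_s^2$, giving $(\rho_1,\rho_2)\in\mathscr C \Leftrightarrow (p_1\rho_1,p_2\rho_2)\in\mathscr C$ directly; from there all three implications are quick. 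Your argument is instead purely symplectic--linear-algebraic: you identify $(S_1\times S_2)^{\sigma_2}=S_1^\sigma\times S_2^\sigma$, invoke that a Lagrangian contained in an involutive subspace contains its annihilator, push $\mathscr C$ forward through $\Phi=(p_1,p_2)$ to a Lagrangian $\mathcal C$ in the symplectic reduction $(S_1\cap S_2)^2$, and compare against the diagonal. Both arguments are correct and the dimension counts check out (including the key computation $\dim\{p_1=p_2\}=(2m-2k)+k+k=2m=\dim\mathscr C$). The trade-off: your linear-algebra route is self-contained and arguably slicker in the exactly linear setting, but the paper's flow-invariance argument is the one that survives passage to the nonlinear/almost-analytic version needed in \S\ref{s:Laeik}, where $\mathscr C_H$ is built precisely by flowing out a symplectic base along the commuting Hamiltonian fields $H_{\zeta^H_j}$, $H_{\bar\zeta^H_j}$; your reduction argument would require more care to transplant there. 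One small stylistic note: in your $(3)\Rightarrow(1)$ step the verification of isotropy of the set in (3) is extraneous, since $\mathscr C$ is assumed Lagrangian throughout; only the idempotence and intersection clauses need proof, which you handle correctly.
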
 
\begin{proof} 
We can find defining functions of $ S_i$'s, $ \zeta_j^i $, $ j = 1, \cdots, k $, 
$ \{ \zeta_j^i, \zeta_\ell^i \} = 0 $, (chosen them globally here as we are in the linear case). Then $ H_{\zeta_j^i} $ are tangent to 
$ S_j $. 
$ H_{\zeta_j^1 } \oplus 0 $ and $ 0 \oplus H_{ \zeta_j^2 } $ are tangent to $ \mathscr C $. Defining 
$ \Phi^t_i := \exp ( t_1 H_{ \zeta_1^i} ) \cdots \exp ( t_k H_{ \zeta_k^i } ) $ 
we see that $ \mathscr C $ is invariant under the action of $ \Phi_t^1 \oplus \Phi_s^2 $, $ t, s \in \CC^k $. It then follows that
\begin{equation}
\label{eq:p1p2C} ( \rho_1 , \rho_2 ) \in \mathscr C \ \Longleftrightarrow \ 
( p_1 ( \rho_1 ) , p_2 ( \rho_2 ) ) \in \mathscr C . \end{equation}
With this in place the lemma is immediate:
(1) $ \Rightarrow $ (2) is obvious from the second condition in 
(1);
(2) $ \Rightarrow $ (3) follows from \eqref{eq:p1p2C} and 
dimension counting;
(3) $ \Rightarrow $ (1) is clear: $ ( \rho_1 , \rho ) \in \mathscr C $ and
$ ( \rho, \rho_2 ) \in \mathscr C $ implies that $ \rho  \in S_1 \cap 
S_2 $ 
and hence $  p_1 ( \rho_1 ) = p_2 ( \rho ) = \rho = p_1 ( \rho ) =  p_2 ( \rho_2 ) $. 
\end{proof}

\section{Projector with weights}
\label{s:we}

We now want to prove an analogue of \eqref{eq:cofe0} in the case of 
weighted spaces. For that we assume that $ P = p^w ( x, h D ) $ where
$ p \in S ( 1 )$  has a bounded analytic continuation to a fixed neighbourhood of $ T^* \RR^n \subset \CC^{2n} $. In that case, following Martinez \cite{M} and earlier works, we will show that for $ \varphi \in \CIc ( T^* \RR^n ) $ with $ 
\| \nabla \varphi \|_{L^\infty } $ sufficiently small (depending on the neighbourhood in which $ p $ is analytic) we have
\begin{equation}
\label{eq:cofe1}
\begin{split} \langle  T P u ,  T v \rangle_{L^2_\varphi  } & =
\langle p_\varphi  T u ,  Tv  \rangle_{L^2_\varphi } 
+ 
\mathcal O ( h ) \|  Tu \|_{L^2_\varphi }  
\|  Tv \|_{L^2_\varphi } , \end{split}
\end{equation}
where
\[ p_\varphi ( x, \xi ) := p ( x +  {2}\partial_z \varphi, \xi -  {2}i \partial_z \varphi ), \ \ z = x - i \xi , \ \ 
L^2_\varphi := L^2 ( T^* \RR^n ; e^{-{2}\varphi/h } d \alpha ), 
 \]
see Theorem \ref{t:2} at the end of \S \ref{s:psew}.
To do this 
we follow the same strategy as in \S \ref{s:FBI} and construct a self-adjoint projection 
\begin{gather}
\label{eq:Piph} 
\begin{gathered}  \Pi_\varphi : L^2 ( T^* \RR^n ) \to \mathscr H , \ \  
\Pi_\varphi^2 = \Pi_\varphi , \ \  \Pi_\varphi |_{\mathscr H} = I_{\mathscr H}, \\
\langle \Pi_\varphi u , v \rangle_{ L^2_\varphi }  
= \langle u , \Pi_\varphi v \rangle_{ L^2_\varphi } . 
\end{gathered}
\end{gather}
We write the last statement as $ \Pi_\varphi^{*,\varphi} = \Pi_\varphi $. In what
follows, for the sake of clarity we drop $ \varphi $ and, unless specifically stated, consider the adjoint in 
$ L^2 ( e^{- 2\varphi/h } d \alpha )$ only. 

{\bf To describe $ \Pi_\varphi $ we make the assumption that $ \| \varphi \|_{ C^2 } $ is 
sufficiently small.}

The strategy for describing $ \Pi_\varphi $ as $ h \to 0 $ goes back to the works of Boutet de Monvel--Sj\"ostrand \cite{BS}, Boutet de Monvel--Guillemin \cite{BG}, Helffer--Sj\"ostrand and was outlined for compact manifolds and compactly supported weights in \cite{Sj96}. The argument proceeds in the following
steps:

\begin{itemize}

\item construction of a uniformly bounded operator (as $ h \to 0 $) $ B : L^2_\varphi \to L^2_\varphi $ such that $ Z_j B = \mathcal O ( h^\infty)_{L^2_\varphi \to L^2 _\varphi}  $,
$ B^{*} = B $ and
$ B^2 = B + \mathcal O ( h^\infty )_{ L^2_\varphi \to L^2_\varphi }$;

\item characterization of the {\em unique} properties of the Schwartz kernel of 
$ B $: uniqueness of the phase and the determination of the amplitude from its restriction to the diagonal;

\item finding a projector $ P = \mathcal O ( 1)_{ L^2_\varphi \to 
L^2_\varphi } $ onto the image of $ T $. 

\item choosing $ f \in S ( 1 )  $, $ f \geq 1/C $ so that 
$ A := P M_f P^* $ (in the notation of \S \ref{s:FBI}), satisfies 
$ A = B + \mathcal O ( h^\infty )_{ L^2_\varphi \to L^2_\varphi }$; this relies on the uniqueness properties in the construction of $ B $;

\item expressing $ \Pi $ as a suitable contour integral of the resolvent of $ A $ 
and using it to show that $ \Pi = B + \mathcal O ( h^\infty )_{ L^2_\varphi \to L^2_\varphi }$.
(this, elementary and elegant part, can be copied verbatim from \cite{Sj96}).
\end{itemize}

To construct $ B $ we postulate an ansatz
\[  B u ( \alpha ) = h^{-n} \int e^{ i  \psi ( \alpha, \beta )/h -   {2}\varphi ( \beta )/h } a ( \alpha, \beta ) u ( \beta ) d \beta , \]
and as in \eqref{eq:Kab0} 
\begin{equation}
\label{eq:Kab010} \begin{gathered} e^{ - i \psi ( \alpha, \beta) /h }  Z_j ( \alpha, h D_\alpha ) \left( e^{ i\psi ( \alpha, \beta )/h }
a ( \alpha, \beta ) \right) = \mathcal O ( |\alpha - \beta |^{\infty}  )  , \\
e^{ - i \psi ( \alpha, \beta) /h } \widetilde Z_j ( \beta, h D_\beta ) \left( e^{ i \psi ( \alpha, \beta )/h }
a ( \alpha, \beta ) \right)  = \mathcal O ( |\alpha - \beta |^{\infty}  ) , \\
Z_j := hD_{x_j} - \xi_j - i hD_{\xi_j} , \ \ 
\widetilde Z_j :=   - hD_{x_j} - \xi_j - i h D_{\xi_j} . 
\end{gathered} \end{equation}
We note that for $ \bar Z_k ( \alpha, h D_\alpha ) := \widetilde 
Z_k ( \alpha, - hD_\alpha ) $ we have 
$  ( i/h) [ Z_j , \bar Z_k ] =  -{2  i \delta_{jk} } $.

{Self adjointness of $B$ implies that} we should also have 
\[  \psi ( \alpha, \beta ) = - \overline{ \psi ( \beta, \alpha ) } , \ \ 
a ( \alpha, \beta ) = \overline{ a ( \beta, \alpha ) } , \]
which is consistent with \eqref{eq:Kab0}. 

The fact that the weights to do {not} appear in $ \widetilde Z_j $ may seem surprising but is easily verified: put $ K_B := e^{ i \psi/h} a/h^n $
and note that 
\[ (Z_j ) ^* = e^{  {2} \varphi /h } \bar Z_j 
e^{ -   {2}\varphi/h} , \ \ 
\bar Z_j  v := \overline { Z_j ^t \bar v } .\]
 Then
 \begin{equation}
\label{eq:Kab1} \begin{split}   0 & \equiv ( Z_j  B )^* u ( \alpha ) = B^* Z_j  ^* u ( \alpha ) = B Z_j^* u ( \alpha ) \\ & 
 = \int K_B ( \alpha, \beta ) e^{ -  {2} \varphi ( \beta)/h }(Z_j  )^* u ( \beta )
d \beta \\
& =  \int K_B ( \alpha, \beta ) \bar Z_j ( \beta, h D_\beta ) \left( 
e^{ -   {2}\varphi( \beta)/ h } u ( \beta ) \right) d \beta  \\
& = 
\int \widetilde Z_j ( \beta, h D_\beta ) K_B ( \alpha, \beta ) e^{ - {2}\varphi( \beta ) /h } d \beta, \ \ \ 
\widetilde Z_j v := \overline { Z_j  \, \bar v } .
\end{split}
\end{equation}

Going back to \eqref{eq:Kab010} we obtain simple eikonal and transport equations
for $ \psi $ and $ a $:
\begin{equation}
\label{eq:psieik} 
\begin{gathered} \psi_{x_j} - \xi_j - i \psi_{\xi_j} = \mathcal O ( | \alpha - \beta |^\infty), \  \ 
- \psi_{y_j} - \eta_j - i \psi_{\eta_j} = \mathcal O ( | \alpha - \beta |^\infty),  \\ \alpha = ( x, \xi) , \  \ \beta = ( y, \eta ) , \end{gathered}
\end{equation}
and
\begin{equation}
\label{eq:transa} 
\begin{gathered}  a = a_0 + h a_1 + \cdots, \\ 
\bar \partial_z a_k = \mathcal O ( | \alpha - \beta|^\infty ) , \ \ \partial_w a_k  = \mathcal O ( | \alpha - \beta|^\infty ) , \\  z = x - i \xi, \ w = x' - i \xi' .
\end{gathered}
\end{equation}
To guarantee the boundedness on $ L^2_\varphi $ and decay away from the diagonal we also 
demand that
 \begin{equation} 
\label{eq:psipos} - \Im \psi ( \alpha, \beta )  -  \varphi ( \alpha ) - \varphi ( \beta )  = c_0 | \alpha - \beta |^2 +
\mathcal O ( | \alpha - \beta |^3 ) ,  
\ \  c_0 > 0 . 
\end{equation}

\subsection{Phase construction}
\label{s:phs}

We now need to discuss the ``initial conditions" for $ \psi $: in the
free case they were given in \eqref{eq:psi0d} and geometrically in 
Lemma \ref{l:relC}. We start in an ``ad hoc" way and then move to the geometric version. Thus we require that
\begin{equation}
\label{eq:psidig}
  \psi ( \alpha , {\alpha} ) = -  {2}i\varphi ( \alpha ) . 
\end{equation}
In the notation of \S \ref{s:FBI} (specifically with $\psi_0$ as in~\eqref{eq:Pi0}) we put
\[  \psi ( \alpha, \beta ) = \psi_0 ( \alpha, \beta ) +  \widetilde 
\psi ( z, \bar w ) ,  \]
so that the equations become
\[  \partial_{\bar z } \widetilde \psi ( z, \bar w )  , \ \ \partial_{ w } \widetilde \psi ( z, \bar w )  = \mathcal O (| z - w |^\infty), \ \ 
\widetilde \psi ( z, \bar z ) =  -  {2}i \varphi ( z ) .\]
(Recall that $ \psi_0 ( \alpha, \alpha ) = 0 $ and that
$  {2}\partial_{\bar z }  \psi_0 = \xi_j $, $  {2} \partial_{ w}  \psi_0 =  {\xi'_j} $, $ z = x - i \xi $, $ w = x' - i \xi' $, $ \alpha = ( x,\xi) $, 
$ \beta = ( x', \xi' ) $.)

We note that the analogue
of \eqref{eq:psi0d} is 
\begin{equation}
\label{eq:dalpsi} ( \partial_z \widetilde \psi) ( z, \bar z ) = 
- {2}i \partial_z \varphi ( z ) , \ \ 
( \partial_{w} \widetilde \psi ) ( z, \bar z ) = - {2} i \partial_{\bar z } \varphi ( z ) . 
\end{equation}
This is solved by taking an almost analytic extension of 
$ \widetilde \varphi  = \widetilde \psi|_{ \bar{\Delta } } $ from the totally 
real submanifold $ \bar \Delta $ -- see \eqref{eq:Deltab}. We note
here that $ d ( ( z, w ) , \bar{\Delta } ) = | z - \bar w | $. 

\noindent
{\bf Remark.} 
Note for any smooth function $f(z)$, if $g(z,w)$ is almost analytic near $\bar{\Delta}$ with $g(z,\bar{z})=f(z)$, then since $\partial_{\bar{z}}g(z,w),\,\partial_{\bar{w}}g(z,w)=O(|z-\bar{w}|^\infty)$ we have $\partial_zf(z)=\partial_z g|_{\bar{\Delta}}+\partial_{\bar{w}}g|_{\bar{\Delta}}$. Hence, $\partial_zg|_{\bar{\Delta}}=\partial_z f$. Similarly, $\partial_{\bar{z}}f=\partial_{w}g|_{\bar{\Delta}}$. 
\qed

We then get
\[  \psi ( \alpha, \beta ) = \psi_0 ( \alpha, \beta ) + \widetilde \psi ( z, \bar w ) . \]
Near the diagonal we have
\[ \begin{split} \Im \widetilde \psi ( z, \bar w ) & = 
- {2} \varphi ( z ) + 
\Im \left( ( \partial_w \widetilde \psi)( z , \bar z ) ( \bar w - \bar z ) \right) + \mathcal O ( \| \varphi\|_{ C^2 } | w - z|^2 )\\
&  = 
-  {2}\varphi ( z ) + \Im ( -  {2}i \partial_{\bar z } \varphi ( z ) 
( \bar w - \bar z ) ) + \mathcal O ( {\| \varphi\|_{C^2} }  | w - z|^2 ) . 
\end{split} \]
Similarly, 
\[ \begin{split} \Im \widetilde \psi ( z, \bar w ) & = 
-  {2}\varphi ( w ) + 
\Im \left( ( \partial_z \widetilde \psi)( w , \bar w ) ( z - w ) \right) + \mathcal O ( {\| \varphi\|_{C^2} }  | w - z|^2 )
\\
& = -  {2}\varphi ( w ) + 
\Im \left( ( \partial_z \widetilde \psi)( z , \bar z ) ( z - w ) \right) + \mathcal O ( {\| \varphi\|_{ C^2 } }   | w - z|^2 ) \\
&  = 
-  {2}\varphi ( z ) + \Im ( -  {2}i \partial_{z } \varphi ( z ) 
( z - w ) ) + \mathcal O ( {\| \varphi\|_{C^2} }  | w - z|^2 ) . 
\end{split} \]
Adding up the two equalities we obtain
\[ \Im \widetilde \psi ( z, \bar w )  
= - \varphi ( z ) -  \varphi ( w ) + 
\mathcal O ( {\| \varphi\|_{C^2} }  | w - z |^2 ) .
 \]
Hence, 
\[\begin{split} - \Im \psi ( \alpha, \beta )  -  \varphi ( \alpha ) - \varphi ( \beta )  & = - \Im \psi_0 ( \alpha, \beta ) + 
\mathcal O ( {\| \varphi\|_{C^2} } | \alpha - \beta|^2 )  \\
& =
- \tfrac14 | \alpha - \beta |^2 + \mathcal O ( { \| \varphi\|_{C^2} }  | \alpha - \beta|^2 ) .  
\end{split} \]
Hence, if $ \| \varphi \|_{ C^2} $ is small enough, we obtain 
\eqref{eq:psipos}.

\noindent
{\bf Remark.} A more careful analysis of the quadratic terms would show that 
we only need $ \sum_{ i,j} \partial_{ z_j \bar z_k} \varphi ( z) 
\zeta_j \bar \zeta_k > - \frac 14 | \zeta|^2 $ which is a subharmonicity condition. We will not pursue this direction here.

We now discuss the property
 $  B = B^2 + \mathcal O ( h^\infty )_{ L^2_\varphi \to L^2_\varphi } $
 and that will lead naturally to the construction of $ a $ in 
 \eqref{eq:transa}.
Denoting the kernel of $ B^2 $ by $ K_{B^2} $ (analogue of $ K_B $ in 
\eqref{eq:Kab1}) we have
 \begin{align} K_{B^2}  ( \alpha, \beta ) & = 
\int K_B ( \alpha, \gamma ) K_B ( \gamma, \beta ) e^{ - {2}\varphi( \gamma){/h} } 
d \gamma \notag \\
& = 
h^{-2n} \int e^{ \frac i h ( \psi ( \alpha, \gamma ) + \psi ( \gamma, \beta ) +  {2}i \varphi ( \gamma ) ) 
) } a ( \alpha, \gamma ) a ( \gamma, \beta ) d \gamma . \label{eq:KB2}
\end{align} 
In view of \eqref{eq:psipos} we can assume that $ a $ is supported near
the diagonal and that justifies an application of (complex) stationary
phase. Let
\[ \psi_1 ( \alpha , \beta ) = {\rm{c.v.}}_\gamma \left( \psi ( \alpha, \gamma ) + \psi ( \gamma , \beta ) +  {2}i \varphi ( \gamma ) \right) .
\] 
Since $ B^2 $ is self-adjoint on $ L^2_\varphi $, $ \psi_1 ( \alpha, \beta ) $ satisfies the eikonal equations \eqref{eq:psieik}. 
If we show that \eqref{eq:psidig} holds for $ \psi_1 $ 
then the uniqueness in the construction of $ \psi $ will show that
$ \psi_1 \equiv \psi$ to infinite order on the diagonal.

\noindent
{\bf Remark.} In our special case we can see that $ \psi_1 = \psi $ 
quite immediately. Let us change to holomorphic coordinates {and recall~\eqref{eq:holPhase}. Then, }with 
\begin{equation}
\label{eq:squirrel}
\Phi ( z ) := \tfrac 12 | \Im z |^2 +  \varphi ( z ),\qquad\Psi ( z, w ) = -\tfrac14 ( z - w )^2 + i \widetilde \psi ( z, w ),
\end{equation} 
we have 
{$$
\psi(\alpha,\beta)={i}\left[\Phi_0(z)+\Phi_0(w)\right]-i\Psi(z,\bar{w}), \ \  \Phi_0 ( z ) := | \Im z |^2 .
$$}
{Therefore, with $\gamma\mapsto (v,\bar{v})$, the 
\begin{align*}
i[\psi ( \alpha, \gamma ) + \psi ( \gamma , \beta ) +  {2}i \varphi ( \gamma )]
&=\Psi(z,\bar{v})+\Psi(v,\bar{w})-\Psi(v,\bar{v})- \Phi_0(z) - \Phi_0(w).
\end{align*}}
Then immediately
\begin{equation}
\label{eq:cvPsi}  \Psi ( z, \bar w ) = {\rm{c.v.}}_{v, \bar v } ( \Psi ( z, \bar v ) + 
\Psi ( v , \bar w ) - \Psi ( v, \bar v ) ) .\end{equation}
In fact, treating $ v  $ and $ \bar v $ as independent variables (stationary phase is ``real")
\[ \begin{split} 0 & = \partial_v ( \Psi ( z, \bar v ) + 
\Psi ( v , \bar w ) - \Psi ( v, \bar v ) ) = 
\partial_v \Psi ( v, \bar w ) - \partial_v \Psi ( v, \bar v ) 
\\
&  = 
\partial_{w \bar w} \Phi ( w ) ( \bar w - \bar v ) + 
\mathcal O ( | \bar w - \bar v |^2 )  , \\
0 & = \partial_{\bar v } ( \Psi ( z, \bar v ) + 
\Psi ( v , \bar w ) - \Psi ( v, \bar v ) )
 = 
\partial_{\bar v } \Psi ( z, \bar v ) - \partial_{\bar v}
 \Psi ( v, \bar v ) \\
& = 
\partial_{z  \bar z } \Phi ( z) ( z  -  v ) + 
\mathcal O ( | z - v |^2 ) . 
\end{split} \]
Since $ \partial_{ z \bar z } \Phi $ is non-degenerate we obtain 
$ v = z $ and $ \bar v = \bar w $. Inserting these critical values in 
on the right hand side of \eqref{eq:cvPsi}  yields the desired equality. {In particular,~\eqref{eq:cvPsi} implies that 
$
 \psi(\alpha,\beta)={\rm{c.v.}}_\gamma \left( \psi ( \alpha, \gamma ) + \psi ( \gamma , \beta ) +{2} i \varphi ( \gamma ) \right) .
$}

\subsection{Geometry of the phase}
We now proceed as in \S \ref{s:geop} but with complications due to the
fact that the smooth weight $ \varphi $ makes the problem non-linear and
non-holomorphic.

We start with a formal discussion assuming that 
$ \varphi $ has a holomorphic extension to a neighbourhood of $ 
\CC^{2n}  $, $ U $ (if $ \varphi $ or even $ \nabla \varphi $ are bounded,
everything we have said so far remains valid).  

Let 
\[ \mathscr C := \{ ( \alpha, d_\alpha \psi ( \alpha, \beta ) 
+ {i} d_\alpha \varphi ( \alpha)  ; \beta, - d_\beta \psi ( \alpha, 
\beta ) - {i} d_\beta \varphi ( \beta ) ) : ( \alpha, \beta ) \in 
U \times U \}. \]
We now define 
\[  \zeta_j^\varphi ( \alpha, \alpha^*)  := \zeta_j ( \alpha , \alpha^* + i \partial_\alpha \varphi ( \alpha )  ) , \ \ 
S_1 := \{ \rho \in U : \zeta_j^\varphi ( \rho ) = 0 \}.
\]
We note that (since in our case so far $ \zeta_j ( \alpha, \alpha^* ) $ are linear)
\[  Z_j^\varphi ( \alpha , h D_\alpha ) = 
e^{   \varphi ( \alpha ) / h } Z_j ( \alpha, h D_\alpha ) e^{  {-}\varphi ( \alpha )/ h  } . \]
Using $ \widetilde \zeta_j $ we similarly define $ \widetilde \zeta_j^\varphi $ and $ S_2 $. 

Formally, we are in the situation described in Lemma \ref{l:relC} but 
for $ \varphi \in \CIc ( T^* \RR^n) $ we need an almost analytic version.
In the setting here we already constructed the phase. However, the geometric point of view will be important in \S \ref{s:Laeik} where
we consider a different approach.

\subsection{Amplitude construction}
\label{s:ams}

To find the amplitude $a(\alpha, \beta)$ we once again use the fact that {\em it is enough to determine $a$ on the diagonal}. Application of complex stationary phase to~\eqref{eq:KB2} yields
\begin{equation}
\label{eq:KB21} 
K_{B^2}=h^{-n} e^{\frac{i}{h}\psi(\alpha,\beta)}b(\alpha,\beta)
, \ \ 
b(\alpha,\alpha) \sim \sum_{j}h^jL_{2j}a(\alpha,\gamma)a(\gamma,\alpha)|_{\gamma=\alpha} , 
\end{equation}
where $L_{2j}$ are differential operators of order $2j$ in $\gamma$ and $L_0|_{\Delta}=f(\alpha)$, $ | f ( \alpha ) | > 0 $. Since $\psi(\alpha,\beta)=-\overline{\psi(\beta,\alpha)}$, $f(\alpha)\in\RR$.
We note that if $ a ( \alpha, \beta ) = \overline {a ( \beta  ,\alpha ) }$, then $  b ( \alpha, \beta ) = \overline {b ( \beta  ,\alpha ) }$
as the operator $ B^2 $ is also self-adjoint. In particular, 
$ b ( \alpha, \alpha ) \in \RR $. 

Writing 
$
a\sim \sum_{j}h^ja_j, 
$
we have 
$$
b(\alpha,\beta )\sim \sum_j h^j b_j (\alpha, \beta )  , \ \ 
b_j ( \alpha, \alpha ) = \sum_{k+\ell+m=j}L_{2k}
 a_\ell(\alpha,\gamma)a_{m}(\gamma,\alpha)  |_{\gamma=\alpha} .
$$
We note that if $ a_\ell( \alpha, \beta ) = 
\overline{ a_\ell ( \beta, \alpha ) }$ for $ \ell \leq M $ then 
$ b_\ell |_{\Delta} \in \RR $ for $ \ell \leq M $. Since
\[ b_M ( \alpha, \alpha ) = 2 f ( \alpha ) a_0 ( \alpha, \alpha)
a_M ( \alpha, \alpha ) + \sum_{\substack{ k+\ell+m =M\\\ell , m < M} } 
L_{2k }  a_\ell ( \alpha, \gamma ) a_m ( \gamma, \alpha) |_{ \gamma = \alpha}   , \]
it follows that 
\begin{equation}
\label{eq:aell} a_\ell ( \alpha , \beta )  =  \overline{ a_\ell ( \beta , \alpha ) }
 , \ \ell < M  \ \Longrightarrow \  
 \sum_{\substack{ k+\ell+m =M\\\ell , m < M} } 
L_{2k}  a_\ell ( \alpha, \gamma ) a_m ( \gamma, \alpha) |_{ \gamma = \alpha}  \in \RR .  
 \end{equation}

We iteratively solve the following sequence of equations
\begin{equation}
\label{eq:iterate}
\sum_{k+\ell+m=j}L_{2k}a_\ell(\alpha,\gamma)a_{m}(\gamma,\alpha)|_{\gamma=\alpha}=a_j(\alpha,\alpha)
\end{equation}
with $a_j|_{\Delta}$ real. 
Since $a$ is defined by its values on the diagonal taking almost analytic extensions from $\alpha=\beta$ will complete the proof. First, let
$$
a_0(\alpha,\alpha)=\frac{1}{f(\alpha)}\in C^\infty(T^*\RR^n;\RR)
$$
so that $f(\alpha)a_0(\alpha,\alpha)^2=a_0(\alpha,\alpha)$ (i.e.~\eqref{eq:iterate} is solved for $j=0$). Next, take an almost analytic extension of $a_0|_{\bar{\Delta}}$ to define $a_0$ in a 
small neighbourhood of $\bar{\Delta}$ with $a_0(\alpha,\beta)=\overline{a_0(\beta,\alpha)}$. 

Assume now that~\eqref{eq:iterate} is solved for $j\leq M-1$.  Then,~\eqref{eq:iterate} with $j=M$ reads
\begin{align*}
a_M(\alpha,\alpha)&=\sum_{k+\ell+m=M}L_{2k}a_\ell(\alpha,\gamma)a_{m}(\gamma,\alpha)|_{\gamma=\alpha}\\
&=2a_M(\alpha,\alpha)+\sum_{\substack{k+\ell+m=M\\\ell,m<M}}L_{2k}a_\ell(\alpha,\gamma)a_{m}(\gamma,\alpha)|_{\gamma=\alpha}
\end{align*}
Putting
$
a_M(\alpha,\alpha)=-\sum_{\substack{k+\ell+m=M\\\ell,m<M}}L_{2k}a_\ell(\alpha,\gamma)a_{m}(\gamma,\alpha)|_{\gamma=\alpha}
$ we solve~\eqref{eq:iterate} for $j=M$. From \eqref{eq:aell} we see that
$ a_M ( \alpha, \alpha ) $ is real.
Taking an almost analytic continuation with $a_M(\alpha,\beta)=\overline{a_M(\beta,\alpha)}$ then completes the construction of $a_M$ and hence by induction and the Borel summation lemma we have
\begin{equation}
\label{eq:a2b}
b =a+O(h^\infty)+O(|\alpha-\beta|^\infty).
\end{equation}
with $a(\alpha,\beta)=\overline{a(\beta,\alpha)}$. 

Finally, it remains to check that an operator $R$ with kernel 
$$
K_R(\alpha,\beta)=\chi(|\alpha-\beta|/C)r(\alpha,\beta)e^{\frac{i}{h}\psi(\alpha,\beta)-\frac{ {2}\varphi(\beta)}{h}},\qquad r=O(h^\infty +|\alpha-\beta|^\infty),\, \chi\in C_c^\infty(\RR)
$$
has $R=O(h^\infty)_{L^2_\varphi\to L^2_\varphi}.$ For that, consider the kernel of $e^{-\varphi/2h}Re^{\varphi/2h}$ given by
$$
K_{R,\varphi}(\alpha,\beta)=r(\alpha,\beta)e^{\frac{i}{h}( \psi(\alpha,\beta) + i \varphi(\beta) + i \varphi(\alpha)}.
$$
Now, by~\eqref{eq:psipos}, 
$$
\Big|e^{\frac{i}{h}( \psi(\alpha,\beta) + i \varphi(\beta)+ i\varphi(\alpha)) }\Big|\leq e^{-c|\alpha-\beta|^2/h} 
$$
and hence $K_{R,\varphi}=O(h^\infty)_{C_c^\infty}$ and is supported in $|\alpha-\beta|\leq C$. Schur's lemma then implies $R=O(h^\infty)_{L^2_\varphi\to L^2_\varphi}.$

\subsection{Construction of the projector}
\label{s:cotp} 

We first construct $ P$ with the following properties:
\begin{equation}
\label{eq:ProjP}
P T v = T v , \ \ v \in L^2 ( \RR^n ) , \ \  \| P \|_{L^2_\varphi 
\to L^2_\varphi } \leq C , \end{equation}
with $ C $ independent of $ h $.

The holomorphic structure will be used in the construction of $ P$ 
and we again write $ z = x - i \xi $, $ \Phi_0 ( z ) := \frac 12 |\Im z |^2 $, {and $\Phi$ as in~\eqref{eq:squirrel}}. 
We then recall that 
\[  w = T v , \ v \in L^2 (\RR^n ) \ \Leftrightarrow \ 
u := e^{ | \Im z|^2/2h } w \in L^2_{\Phi_0} ( \CC^n ) , 
\] 
see for instance \cite[\S 13.3]{zw}. We construct
\begin{equation}
\label{eq:ProjPhi}  P_\Phi = \mathcal O ( 1 ) : L^2_\Phi \to H_\Phi , \ \ \  
P_\Phi u  = u , \ \ u \in H_\Phi . \end{equation}
We note that, {since $|\Phi-\Phi_0|\leq C$}, as spaces $ L^2_{\Phi_0} = L^2_{\Phi } $ and the issue is 
the uniform boundedness as $ h \to 0$.
The following $ P_\Phi $ will satisfy \eqref{eq:ProjPhi}:
\begin{equation}
\label{eq:Projphi1}  P_\Phi u ( z ) :=  
\frac{ C^n }{ (\pi h )^n } \int_{\CC^n} e^{ - C| z- w|^2 /h +  
2 \langle z - w , \partial_z \Phi ( z ) \rangle/h } u ( w ) dm ( w ) ,
\end{equation}
provided that $ C $ is suffiently large.
To check uniform boundedness on $ L^2_\Phi $ 
we note that 
\begin{equation}
\label{eq:laPh}  2\Re \langle z - w , \partial_z \Phi ( z ) \rangle = 
 \Phi ( z ) -  \Phi ( w ) + \mathcal O ( \| \Phi'' \|_{L^\infty } |z-w|^2 ) 
.
\end{equation}
Since $ \Phi'' $ is uniformly bounded (in fact constant outside of a compact set) we see that for $ C $ sufficiently large, 
\[ -  \Phi ( z ) +2 \Re \left( - C   | z- w|^2 + 
\langle z - w , \partial_z \Phi ( z ) \rangle \right) + \Phi ( w ) 
\leq - | w - z|^2 , \]
which (using Schur's criterion) shows uniform boundedness of $ P_\Phi $ 
on $ L^2_\Phi $. For $ u \in H_\Phi $ we have $ P_\Phi u = u $
-- see for instance \cite[(13.3.16)]{zw} (the fact that $ \Phi ( z ) $ is not quadratic plays no role in the argument). 
Returning to \eqref{eq:ProjP} we put
\[ P :=  e^{ - |\Im z |^2/2 h } P_\Phi e^{ |\Im z |^2/2h } .\]

We now construct the projector $ \Pi =  \Pi_\varphi $ and relate it
to the parametrix $ B $ constructed above. That is done by repeating the argument presented in \cite{Sj96}. 

We take $ f \in S ( T^* \RR^n ) $, $ f \geq c > 0$, and consider
\[  A := A_f = P M_f P^{*,\varphi}  , \ \  P^{*, \varphi } 
= e^{  {2}\varphi/h } P^* e^{ -  {2}\varphi/h } . \] 
We write the action of $ A $ as follows:
\[ A u ( \alpha ) = \int K_A ( \alpha, \beta ) u ( \beta ) e^{ -  {2}\varphi ( \beta ) / h } d \beta , \ \ 
K_A ( \alpha, \beta ) = h^{-n} e^{ \frac i h \psi_A ( \alpha, \beta ) } a_f ( \alpha, \beta ) , \]
where the phase and amplitude are obtained from the method of stationary phase in the composition defining $ A $.
We claim that 
\[ \psi_A ( \alpha , \beta ) = \psi ( \alpha, \beta ) + \mathcal O ( |\alpha - \beta |^\infty ). \]
To see that we note that \eqref{eq:Kab01}
and hence \eqref{eq:psieik},\eqref{eq:transa} hold with $ \psi $ and $ a $ 
replaced by $ \psi_A $ and $ a_f $. Hence it is sufficient to check that 
\eqref{eq:psidig} holds for $ \psi_A $. We calculate the critical value {on the diagonal}
in notation used in \eqref{eq:Projphi1},\eqref{eq:laPh}: 
\begin{align*} {i\psi_A(\alpha,\alpha)+\Phi_0(z)}&={\rm{c.v.}}_w \left( - 2 C | z - w|^2 + 
2 \Re \langle z - w , \partial \Phi ( z ) \rangle + \Phi ( w ) \right) \\
&=\Phi ( z ) + {\rm{c.v.}}_w \left( - C | z - w|^2 + \mathcal O ( | z - w|^2 ) \right) \\
& =
\Phi ( z ) = \Phi_0 ( z ) + {2} \varphi ( z ) . \end{align*}
Since the left hand side is equal to $ i \psi_A ( \alpha, \alpha ) + 
\Phi_0 ( z ) $, \eqref{eq:psidig}, and hence $ \psi \equiv \psi_A $ follow.

 {Since equations \eqref{eq:transa} are} satisified by $ a_f $, 
$ a_f  $ is determined up to $ O ( h^\infty + |\alpha-\beta|^\infty ) $ by $ a_f|_{\Delta } $. We want to choose $ f \sim \sum_j h^j f_j $ so that
\[  a_f ( \alpha, \alpha ) \sim \sum a_{f,j} ( \alpha , \alpha ) h^j , \
\text{ and } \  a_{f,j} ( \alpha , \alpha ) = a_j ( \alpha, \alpha ) , 
\]
where $ a \sim \sum h^j a_j $ is the amplitude in the construction of $ B $. As in \S \ref{s:ams} (but with different $ L_{2k} $'s, 
$ g := L_0 |_\Delta \neq 0 $) we have, 
\[ a_{f,j} ( \alpha, \alpha ) =
\sum_{ k + \ell = j } L_{2k } f_\ell ( \alpha ) = g ( \alpha ) f_j ( \alpha) + \sum_{\substack{k+ \ell = j\\\ell < j} } L_{2k } f_\ell ( \alpha ). \]
(In our special case, the amplitude in $ P $ is constant which is not the case in generalizations -- but the argument works easily just the same.) Using this, solving $ a_{f,j} ( \alpha ) = a_j ( \alpha ) $ 
for $ f $ is immediate. As in the construction of the amplitude of 
$ B $ in \S \ref{s:ams} we see that $ f $ is real valued and $ f_0 $ is bounded from below.

To summarize, we constructed 
\[  B u ( \alpha ) = \int e^{ \frac i h \psi ( \alpha, \beta ) } 
a ( \alpha, \beta ) e^{ -  {2}\varphi ( \beta ) / h } d \beta \]
and found $ f $ such that
\begin{equation}
\label{eq:propB} 
\begin{gathered}  B = \mathcal O ( 1 ) : L^2_\varphi \to L^2_\varphi, \ \
B = B^{*,\varphi} , \ \ B = B^2 + \mathcal O ( h^\infty)_{ L^2_\varphi 
\to L^2_\varphi } , 
 \\ B = A_f +  \mathcal O ( h^\infty)_{ L^2_\varphi 
\to L^2_\varphi },  \ \ \ A_f := P M_f P^{*, \varphi} , \\
 f ( \alpha ) \sim \sum_j h^j f_j ( \alpha ) \in S ( 1) , \ \ f ( \alpha ) > 1/C .
 \end{gathered} 
 \end{equation}
We can now quote \cite{Sj96} verbatim to see that
\begin{equation}
\label{eq:B2Pi}  \Pi_\varphi = B + \mathcal O ( h^\infty)_{ L^2_\varphi 
\to L^2_\varphi } . \end{equation}
For the sake completeness we recall the argument. To start we observe that for $ u \in \mathscr H := T ( L^2 ( \RR^n ) ) $, 
$ \| u \|_{ L^2_\varphi } > 0 $, 
\[ \begin{split}  \langle A_f u, u \rangle_{ L^2_\varphi}  & = 
\langle P f P^* u, u \rangle_{ L^2_ \varphi } = \langle  f P^* u, P^* u \rangle_{ L^2 \varphi } \geq \min_{\alpha \in T^* \RR^n }  f 
( \alpha ) \| P^* u \|^2_{L^2_\varphi } \\
& \geq 
 \frac{| \langle P^* u , u \rangle |^2}{ C \| u \|^2_{L^2_\varphi} }  =  \| u \|_{L^2_\varphi}^2  /C .  \end{split} 
 \]
Hence, 
\begin{equation}
\label{eq:specAf}
\begin{gathered} \| u \|_{L^2_\varphi } /C  \leq \| A_f u \|_{L^2_\varphi } \leq {C}\| u \|_{L^2_\varphi} , \ \ u \in \mathscr H , \\
A_f u = 0, \ \ u \in \mathscr H^\perp , \ \ A_f^* = A_f, 
\end{gathered}
\end{equation}
and
\begin{equation}
\label{eq:contPi}
\Pi_\varphi = \frac{1}{2 \pi} \int_\gamma ( \lambda - A_f )^{-1} d\lambda,  
\end{equation}
where $ \gamma $ is a positively oriented boundary of
an open set in $ \CC $ containing $ [ 1/C ,  {C} ] $ and excluding $ 0 $.
From \eqref{eq:propB} we know that 
\begin{equation}
\label{eq:AfAf2}  A_f = A_f^2 + \mathcal O ( h^\infty)_{L^2_\varphi \to L^2_\varphi } 
\end{equation}
 and we want to use this property to show that $ \Pi_\varphi $ is close to $ A_f $.
For that we note that if $ A = A^2 $ then, at first for $ |\lambda  | \gg 1 $, 
\[ ( \lambda - A )^{-1} = \sum_{j=0}^\infty  
  \lambda^{-j-1} A^j = \lambda^{-1} + \lambda^{-1} 
  \sum_{ j=0}^\infty \lambda^{-j} A = \lambda^{-1} + A \lambda^{-1} 
  ( \lambda - 1 )^{-1}  .\]
Hence, it is natural to take the right hand side as the approximate inverse in the case when $ A^2 - A $ is small: 
\[ \begin{split} ( \lambda - A_f) ( \lambda^{-1}  +  A_f \lambda^{-1}  ( \lambda - 1)^{-1} )   & = 
I - ( A_f^2 - A_f ) \lambda^{-2}  ( \lambda - 1)^{-1}  
 . \end{split} \]
In view of \eqref{eq:AfAf2} and for $ h$ small enough, the right
hand side is invertible for $ \lambda \in \gamma $ with the inverse equal to $ I + R $, $ R = \mathcal O ( h^\infty)_{L^2_\varphi \to L^2_\varphi } $. Hence for $ \lambda \in \gamma $, 
\[ (\lambda - A_f )^{-1} =  \lambda^{-1} + \lambda^{-1} ( \lambda -1)^{-1} A_f + \mathcal O ( h^\infty)_{L^2_\varphi \to L^2_\varphi } . \]
Inserting this identity into \eqref{eq:contPi} and using Cauchy's formula gives 
\[ \begin{split} \Pi_\varphi & = A_f +  \mathcal O ( h^\infty)_{L^2_\varphi \to L^2_\varphi } = 
B +  \mathcal O ( h^\infty)_{L^2_\varphi \to L^2_\varphi },
\end{split}
\]
which is \eqref{eq:B2Pi}.

\section{Pseudodifferential operators on weighted spaces}
\label{s:psew}

We now want to present the action of pseudodifferential operators 
$ P = p^{\rm{w}} ( x , h D ) $, $ p \in S ( 1 ) $ on the 
FBI transform side.

We will use the notation of \cite[\S 13.4]{zw} and note that {by~\cite[Theorem 13.9]{zw}}
\begin{equation}
\label{eq:TpT} 
\begin{gathered}  T P T^* = e^{ -\Phi_0 ( z) / h } q_{\Phi_0}^w (z, hD_z )  e^{ \Phi_0 ( z ) / h } , \ \ q ( x - i \xi , \xi )  
:= p ( x , \xi )  ,  
\\
q_{\Phi_0}^{\rm{w}} ( z, h D_z ) u :=
\frac{1}{ ( 2 \pi h ) }\int\!\!\!\int_{\Gamma_{\Phi_0} ( z ) } 
q \left(  \frac{ z + w } 2 , \zeta \right) e^{ \frac i h \langle z - w , 
\zeta \rangle } u ( w )  d\zeta \wedge d w , \\ 
 \Phi_0 ( z )  :=  \tfrac 12 | \Im z |^2 , \ \ 
\Gamma_{\Phi_0 } ( z ) : w \mapsto 
\zeta = \frac 2 i \partial_z \Phi_0 \left( \frac{ z + w } 2 \right) , \ \ 
u \in H_{\Phi_0 } .  \end{gathered}
\end{equation}
(See the remark after Lemma \ref{l:ponLph} concerning convergence of the integral.) 
We note here that the correspondence between $ q $ and $ p $ is formally
valid for $ ( x, \xi) \in \CC^{2n} $ and that $ \kappa: ( x, \xi ) 
\mapsto ( x - i \xi, \xi ) $ defines a complex linear canonical transformation. The contour $ \Gamma_{\Phi_0 } $ corresponds to 
integrating $ q|_{ \Lambda_{\Phi_0} } $,  
\[ \Lambda_{\Phi_0} := \kappa ( T^* \RR^n ) = 
\{ ( z , \zeta ) : \zeta = -2 i \partial_z \Phi_0 ( z ) \} .\]

We have the following lemma (see \cite[\S 12.5]{SjR} for a more general version and for applications to scattering resonances):
\begin{lemm}
\label{l:ponLph}
Suppose that $ p $ is holomorphic and bounded on 
$ \RR^{2n } + B_{\CC^{2n } } ( 0 , \rho_0 ) \subset \CC^{2n} $ and that $ \Phi ( z )= \Phi_0 ( z ) + 
 {2}\varphi ( z ) $ with $ \varphi \in \CIc ( \CC^n ) $, $ \| \varphi \|_{ C^2 } $ sufficiently small.
Then on $ H_\Phi = H_{\Phi_0} $, 
\[  q_{\Phi_0 }^{\rm{w}} ( z, h D_z ) = q_{\Phi}^{\rm{w}} ( z , h D_z ) 
= \mathcal O ( 1 ) : H_{\Phi } \to H_{\Phi } , 
\]
where for $ u \in H_{\Phi } $, 
\begin{equation}
\label{eq:qPhi} \begin{gathered}  q_{\Phi}^{\rm{w}} ( z , h D_z ) u  =
\frac{1}{ ( 2 \pi h )^n }\int\!\!\!\int_{\Gamma_{\Phi, c} ( z ) } 
q \left(  \frac{ z + w } 2 , \zeta \right) e^{ \frac i h \langle z - w , 
\zeta \rangle } u ( w )  d\zeta \wedge d w , \\ 
\Gamma_{\Phi , c} ( z ) : w \mapsto 
\zeta = \tfrac 2 i \partial_z \Phi \left( \frac{ z + w } 2 \right) 
+ c i \frac{ \overline {  z - w  } }{ \langle z - w \rangle } ,  
 \end{gathered}\end{equation}
where $ c > 0 $ is sufficiently small.
\end{lemm}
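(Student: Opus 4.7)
The argument naturally splits into (i) showing $q_{\Phi_0}^{\rm w}u = q_\Phi^{\rm w}u$ on $H_\Phi$ via contour deformation, and (ii) verifying the bound $q_\Phi^{\rm w}:L^2_\Phi\to L^2_\Phi = O(1)$ by Schur's lemma. Since $\Phi - \Phi_0 = 2\varphi$ is bounded, $H_\Phi = H_{\Phi_0}$ as sets, so both operators make sense a priori on the same space.

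For (i), I linearly interpolate between the two contours. Writing $\zeta_0(w):=\tfrac{2}{i}\partial_z\Phi_0\bigl(\tfrac{z+w}{2}\bigr)$ and $\zeta_1(w)$ for the corresponding point on $\Gamma_{\Phi,c}(z)$, the difference
\[
\zeta_1(w) - \zeta_0(w) = -4i\,\partial_z\varphi\bigl(\tfrac{z+w}{2}\bigr) + ci\,\tfrac{\overline{z-w}}{\langle z-w\rangle}
\]
has modulus bounded by $C(\|\varphi\|_{C^1}+c)$, which is $<\rho_0$ once $\|\varphi\|_{C^2}$ and $c$ are small enough. The interpolating contours therefore remain in the strip of holomorphy of $q$; the integrand is holomorphic in $\zeta$, so Stokes' theorem applied to the chain swept out by the interpolation yields the identity, provided the boundary contributions at $|w|=\infty$ vanish. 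That vanishing follows from the decay estimates in step (ii), first on a dense subset of $H_\Phi$ (say those $u$ with $e^{-\Phi/2h}u\in C^\infty_c$) and then by continuity.

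For (ii), a direct computation of $\Im\langle z-w,\zeta\rangle$ on $\Gamma_{\Phi,c}$ gives
\[
\bigl|e^{\frac{i}{h}\langle z-w,\zeta\rangle}\bigr| = \exp\!\Bigl(\tfrac{1}{h}\bigl[\,2\,\Re\langle z-w,\partial_z\Phi(\tfrac{z+w}{2})\rangle - c\tfrac{|z-w|^2}{\langle z-w\rangle}\,\bigr]\Bigr).
\]
Exactness of the midpoint identity for the quadratic $\Phi_0$ reduces $2\Re\langle z-w,\partial_z\Phi(\tfrac{z+w}{2})\rangle$ to $\Phi(z)-\Phi(w)-R(z,w)$, where $R$ is a Taylor remainder arising solely from the $2\varphi$ piece. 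Combined with $|q|\leq C$ and the bounded Jacobian of $\Gamma_{\Phi,c}$, the conjugated kernel $\tilde K(z,w):=e^{(\Phi(w)-\Phi(z))/h}K(z,w)$ satisfies
\[
|\tilde K(z,w)|\;\leq\; Ch^{-n}\exp\!\Bigl(\tfrac{1}{h}\bigl[R(z,w) - c\tfrac{|z-w|^2}{\langle z-w\rangle}\bigr]\Bigr).
\]
If the exponent is bounded by $-c'|z-w|^2/(\langle z-w\rangle h)$, then $\int|\tilde K|\,dm(w)\leq C$ uniformly in $z$, and symmetrically in the other variable, so Schur's lemma delivers the $O(1)$ bound.

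\textbf{Main obstacle.} The principal subtlety is controlling $R$ for large $|z-w|$. The naive Taylor bound $|R|\lesssim\|\varphi\|_{C^2}|z-w|^2$ is not dominated by the contour-correction decay $c|z-w|^2/\langle z-w\rangle \sim c|z-w|$. Compact support of $\varphi$ saves the day: for $|z-w|$ large, $\varphi(z)$ and $\varphi(w)$ are each bounded by $\|\varphi\|_{C^0}$, while the cross-term $4\,\Re\langle w-z,\partial_z\varphi((z+w)/2)\rangle$ is bounded by $4|z-w|\|\varphi\|_{C^1}$, so $R$ in fact grows at most linearly in $|z-w|$. This linear growth is then beaten by $c|z-w|^2/\langle z-w\rangle$ whenever $c$ exceeds a fixed multiple of $\|\varphi\|_{C^1}$ — a range compatible with $c$ itself being small because the hypothesis allows $\|\varphi\|_{C^2}$ (and hence $\|\varphi\|_{C^1}$) to be taken as small as required.
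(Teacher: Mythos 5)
Your strategy coincides with the paper's (contour deformation justified by holomorphy and smallness, then Schur's test), but the paper's proof is essentially two sentences long, citing only \eqref{eq:laPh} for the Schur bound, and you have filled in the real content. The crucial points you supply are genuinely necessary and not literally contained in \eqref{eq:laPh}: that formula is stated at $\partial_z\Phi(z)$ with a remainder $\mathcal O(\|\Phi''\|_{L^\infty}|z-w|^2)$, and since $\Phi''$ contains the non-small piece $\Phi_0''$, a crude use of it would produce a quadratic remainder with an $O(1)$ constant that cannot be beaten by the small contour correction $c|z-w|^2/\langle z-w\rangle$. Your observation that the midpoint identity is \emph{exact} for the quadratic $\Phi_0$, so that the remainder $R$ involves only $2\varphi$, is the step that makes the smallness hypothesis on $\varphi$ usable; and your use of compact support to upgrade the crude quadratic bound on $R$ to a linear one for $|z-w|$ large is what lets the $c|z-w|^2/\langle z-w\rangle\sim c|z-w|$ decay close the Schur estimate. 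Both regimes (small and large $|z-w|$) ultimately require $\|\varphi\|_{C^1}+\|\varphi\|_{C^2}\lesssim c$ after $c$ has been fixed small in terms of $\rho_0$, and your last sentence correctly records that this is consistent with the hypotheses.

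One technical slip: the ``dense subset of $H_\Phi$ with $e^{-\Phi/2h}u\in C_c^\infty$'' is empty except for $u\equiv 0$, since $u$ is entire and $e^{\Phi/2h}$ nowhere vanishes; a compactly supported holomorphic function is zero. For the Stokes argument (vanishing of the boundary contribution at $|w|=\infty$), instead use, e.g., $u=Tv$ with $v\in\mathscr S(\RR^n)$, or more generally $u\in H_\Phi$ with $e^{-\Phi/h}|u|^2$ of superexponential decay --- such $u$ are dense and the undeformed $c=0$ contour integral converges absolutely for them, after which the interpolation argument and a limiting step give the identity on all of $H_\Phi$.
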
 

\noindent
{\bf Remark.} When $ q|_{\Lambda_\Phi} \in \mathscr S ( \Lambda_\Phi ) $ then we can take $ c = 0 $ and have a convergent integral in 
\eqref{eq:qPhi}. Since we assume analyticity the deformed contour provides a quick definition for $ q $ bounded near $ \Lambda_\Phi + 
B_{\CC^n} ( 0 , \rho_0 ) $ which in the case of $ q \in S ( \Lambda_{\Phi} ) $ requires the usual integration by parts and density (of $ \mathscr S \subset S $ in the $ \langle z \rangle^\epsilon S $ topology) arguments -- see the proof of 
\cite[Theorem  13.8]{zw}.

\begin{proof}
We can deform the integral in \eqref{eq:TpT} to the contour given by 
$ \Gamma_{\Phi, c} ( z ) $ (see the remark above concerning convergence): since
$ \varphi $ is small and we take $ c > 0 $ small the deformation is allowed as $ q $ is holomorphic and bounded in $ \Lambda_{\Phi_0 } + 
B_{\CC^n } ( 0 , \rho_0 ) $. 
To see the boundedness on $ H_\Phi $ we use \eqref{eq:laPh}.
\end{proof} 

We now discuss 
\[  \Pi_\varphi T P T^* \Pi_\varphi = \mathcal O ( 1 ) : H_\Phi \to H_\Phi , \]
where the uniform boundedness follows from Lemma \ref{l:ponLph}. 
We can apply the method of stationary phase and for that it is useful to use the notation of \eqref{eq:cvPsi}. The phase then becomes
{\begin{gather*}
\psi(\alpha,\beta)+2i\varphi(\beta)-i[\Psi(z,\bar{w})+\Psi(z,\bar{v})-\Phi(v)+i\langle v-v',\zeta\rangle+\Psi(v',\bar{w})]\\
\zeta = \tfrac 2 i \partial_z \Phi \left( \frac{ v + v' } 2 \right) 
+ c i \frac{ \overline{ v - v' } }{ \langle v - v' \rangle }.
\end{gather*}
We now let
\begin{gather*} \widetilde \Psi  = \Psi ( z, \bar v ) - \Phi ( v ) + i \langle v - v' , \zeta \rangle + 
\Psi ( v' , \bar w ) , \ \ 
\zeta = \tfrac 2 i \partial_z \Phi \left( \frac{ v + v' } 2 \right) 
+ c i \frac{ \overline{ v - v' } }{ \langle v - v' \rangle },
\end{gather*} 
and show that 
\begin{equation}
\label{eq:cvtilPs}  {\rm{c.v}}_{{ v , v', \bar v, \bar v' }}
\widetilde \Psi = 
 \Psi ( z, \bar w ) .  
 \end{equation}  
In fact, for simplicity we take $ c = 0 $ and first compute
\[ \begin{split} 
\partial_{v} \widetilde \Psi & = - \partial_v \Psi ( v, \bar v ) + 
\partial_v \Psi \left( \frac{ v + v' } 2 , \frac{ \bar v + \bar v' } 2  \right) + {\tfrac{1}{2}}\partial_{vv}^2 \Psi \left( \frac{ v + v' } 2 , \frac{ \bar v + \bar v' } 2  \right) ( v - v' ) , \\
\partial_{\bar v'} \widetilde \Psi & ={\tfrac{1}{2}} \partial_{\bar v v }^2 \Psi \left( \frac{ v + v' } 2 , \frac{ \bar v + \bar v' } 2  \right) ( v - v' ).
\end{split} \]
Since $ \partial_{\bar v v } \Psi $ is non-degenerate the second equation 
shows that 
$ v = v' $.  But then the first equation becomes 
$ - \partial_v \Psi ( v , \bar v ) + \partial_v \Psi ( v, 
( \bar v + \bar v')/2 ) = 0 $ so that non-degeneracy of $ \partial_{\bar v v }^2 \Psi $ implies $ \bar v = \bar v' $. 

Computing the remaining two derivatives, 
\[ \begin{split} 
\partial_{\bar v} \widetilde \Psi|_{v = v'}  & =  \partial_{\bar v} \Psi ( z, \bar v ) - \partial_{\bar v } \Psi ( v , \bar v ) , \\
\partial_{ v'} \widetilde \Psi|_{v=v'}   & = 
- \partial_{v } \Psi ( v, \bar v ) + 
 \partial_{ v} \Psi ( v , \bar w ) , \end{split} \]
we use the non-degeneracy of $ \partial_{\bar v v }^2 \Psi $ to see that
$ v = z$ and $ \bar v = \bar w $. But then the critical value of
$ \widetilde \Psi $ is given by $ \Psi ( z , \bar w )$.

We conclude that we have an analogue of \eqref{eq:apw}:
\begin{equation}
\label{eq:Piph1}
\begin{gathered}  \Pi_\varphi T P T^* \Pi_\varphi u ( \alpha ) = 
c_\varphi h^{-n} \int K_{P, \varphi} ( \alpha, \beta ) u ( \beta ) e^{ - {2} \varphi ( \beta)/h} d \beta, \\
K_{P, \varphi} ( \alpha, \beta ) = e^{ \frac i h \psi ( \alpha, \beta ) }
a ( \alpha, \beta ) , \ \ 
a = a_0 + h a_1 + \cdots \\
a_0 ( \alpha, \beta ) = q\left( \frac{ z + w}2, \frac 2 i \partial_z \Phi 
\left( \frac{ z + w} 2 \right) \right) , 
\ \  q ( x  - i \xi , \xi ) = p ( x, \xi ) ,  \\ 
\Phi ( z ) = \tfrac 12 | \Im z |^2 +  \varphi ( z ) , \ \ z = x - i \xi, \ w = y - i \eta, \ \ \alpha = ( x, \xi ) , \ \beta = ( y, \eta ) .
\end{gathered}
\end{equation}
Since 
\[ Z_j ( \alpha , h D_\alpha ) K_P ( \alpha, \beta ) =0 , \ \ 
\widetilde Z_j ( \beta, h D_\beta ) K_P ( \alpha, \beta ) = 0 , \]
construction of $ B $  shows that $ a ( \alpha, \beta ) $ is
determined (modulo $ \mathcal O ( h^\infty )_{L^2_\varphi \to L^2_\varphi } )
$ by $ a|_\Delta $. 
Hence, 
\begin{equation}
\label{eq:t20} \Pi_\varphi T P T^* \Pi_\varphi  = 
\Pi_\varphi M_{p_\varphi } \Pi_\varphi + \mathcal O ( h)_{L^2_\varphi 
\to L^2_\varphi } , \end{equation}
where
\[ \begin{gathered} p_\varphi ( x, \xi ) = q ( z , -i \partial_z \Phi ( z ) ), 
\ z  = x - i \xi , \ \ ( x , \xi ) \in \RR^{2n} \\ 
q ( z ,  \zeta ) = p ( z + i \zeta, \zeta ) , \ \ \Phi ( z ) = \tfrac 12 |\Im z |^2 +  \varphi ( z ) . \end{gathered} \]
But this means that 
\[ \begin{split} p_\varphi ( x, \xi ) & = p ( z + i ( - 2i\partial_z \Phi ( z ) ) , - 2i 
\partial_z \Phi ( z ) ) = p ( x - i \xi + i ( \xi - 2 i \partial_z \varphi ), 
\xi - 2 i \partial_z \varphi ) \\
& = 
p ( x +  {2}\partial_z \varphi , \xi -  {2}i \partial_z \varphi ) , \end{split} \]
which agrees with \eqref{eq:cofe1}. We also obtain the analogue of
\eqref{eq:sja}:
\begin{equation}
\label{eq:sjap} 
T P  = p_\varphi T  + \mathcal O ( h^{\frac12})_{L^2_\varphi ( \RR^n ) \to L^2_\varphi ( T^* \RR^n ) }  .
\end{equation}

We summarize this in the following version of \cite[Corollary 3.5.3]{M}: 
\begin{theo}
\label{t:2}
Suppose that $ P$ is given by \eqref{eq:pseudo1} where the symbol $ p $
enjoys a holomorphic extension satisfying
\begin{equation*} 
| p ( z, \zeta ) | \leq M , \ \ | \Im z |\leq a,  \ \ | \Im \zeta | \leq b .
\end{equation*}
Then for $ \varphi \in C^\infty_{\rm{c}} ( T^* \RR^n ) $ with 
$ \| \varphi \|_{ C^2 } $ sufficiently small and $ L^2_\varphi :=
L^2 ( T^* \RR^n , e^{ - {2}\varphi/h } dx d\xi ) $,
\begin{equation}
\label{eq:t2a}
\langle T P u , T v \rangle_{ L^2_\varphi } 
= \langle M_{P_\varphi} T u , T v \rangle_{ L^2_\varphi } + 
\langle R_\varphi T u , T v \rangle_{ L^2_\varphi } , \end{equation}
where $R_\varphi = \mathcal O ( h^\infty )_{ L^2_\varphi \to L^2_\varphi } $ and 
\[ \begin{gathered} 
 P_\varphi ( x, \xi , h ) = p_\varphi ( x, \xi) + h p_\varphi^1 ( x, \xi ) + \cdots, \\ 
p_\varphi ( x , \xi ) =  p ( x + {2} \partial_z \varphi ( x, \xi)  , \xi - {2} i \partial_z \varphi ( x, \xi) ) , 
 \ \ z = x - i \xi .\end{gathered} \]
\end{theo}
\begin{proof}
The leading term in \eqref{eq:t2a} was already obtained in \eqref{eq:t20}.
Assume that we have obtained $ p_\varphi^j $, $ j = 1, \cdots , J-1$ so
that 
\begin{equation}
\label{eq:PiTPTPi} \Pi_\varphi T P T^* \Pi_\varphi = \Pi_\varphi 
\left(\sum_{j=0}^{J-1} h^j p_\varphi^j \right) \Pi_\varphi + R^J_\varphi, \end{equation}
where 
\[ R^J_\varphi u ( \alpha ) = h^{J -n } c_\varphi \int_{ T^* \RR^n }
e^{ \frac i h \psi ( \alpha, \beta )}  a^J ( \alpha, \beta ) e^{ -  {2}\varphi ( \beta ) /h } {u(\beta)} d \beta , \ \ \  a^J \sim a_0^J + h a_1^J + \cdots,  \]
with $ a^J_k $ satisfying the transport equations \eqref{eq:transa}.
If we apply the method of stationary phase to the first term of the kernel of the first term on 
right hand side of \eqref{eq:PiTPTPi} we obtain a kernel with the expansion
\[   e^{ \frac i h \psi ( \alpha, \beta ) } ( a_0 + h a_1 + \cdots 
+ h^{J-1} a_J + h^J r_0^J + h^{J+1} r_1^J + \cdots ) , \]
where $ a_j$'s are the same as in \eqref{eq:Piph1}. Again all the terms satisfy \eqref{eq:transa} and hence are uniquely determined from their values on the diagonal. Hence, if we put 
\[  p^J_\varphi ( \alpha ) := r_0^J ( \alpha, \alpha ) + a_0^J ( \alpha, 
\alpha ) , \]
we obtain \eqref{eq:PiTPTPi} with $ J $ replaced by $ J+1 $.
\end{proof}

\noindent
{\bf Remark.} The equality \eqref{eq:cofe1} holds for more general weights, $ \varphi \in C^{1,1} $, by more direct arguments -- see
\cite[Theorem 1.2]{SjDuke}. Here we were interested in developing the approach of \cite{HS},\cite{Sj96} based on Bergman-like projectors.

\section{Review of some almost analytic constructions}
\label{s:anex}

In \S \ref{s:projdefo} we will follow \cite{Sj96} and describe the  
orthogonal projector $ L^2_\Lambda \to  T_\Lambda ( L^2 ( \RR^n )) $
(in the notation of Theorem \ref{th:G2ph}). That will involve some 
more involved almost analytic machinery and hence we will first consider
some simpler examples. They seem to be related to some (simpler)
aspects of \cite{Sj74}.

\subsection{General comments about almost analyticity}
\label{s:gaa}

We will be concerned with a neighbourhood of $ \RR^m $ in $ \CC^m $
and for $ U \subset \CC^m $ we define
\[  f \in C^{\rm{aa}} ( U ) \ \Longleftrightarrow \ 
\partial_{\bar z } f ( z ) = \mathcal O_K ( | \Im z |^\infty ) , \ \ 
z \in K \Subset U . \]
This definition is non-trivial only for $ U \cap \RR^m \neq \emptyset $.
We write $ f \sim 0 $ in $ U $ if $ f ( z )  = \mathcal O_K ( | \Im z |^\infty ) $, $ z \in K \Subset U \subset \CC^m $.
We note that (see \cite[Lemma X.2.2]{tre}) that for $ f \in \CI $ that
implies $ \partial^\alpha f \sim 0 $ in $ U $.

Suppose $ \Lambda $ is an almost analytic manifold and 
$ \Lambda \cap \RR^m = \Lambda_\RR $. One way to define $ \Lambda $ 
is through almost analytic defining functions: near any point
$ z_0 \in \Lambda_{{\RR}} $ there exist a neighbourhood $ U $ of $ z_0 $ in 
$ \CC^n $ and $ f_1, \cdots, f_k \in \CI ( \CC^m)$ such that
\begin{gather*} \Lambda \cap U =\{ z : f_j ( z ) = 0 , 1 \leq j \leq n \}, \ \ \partial_z f_j (z_0) \ \text{ are linearly independent,} \\
|\partial_{\bar z} f_j ( z ) | = \mathcal 
O ( | \Im z |^\infty + | \sup_{1 \leq \ell\leq k} f_\ell ( z ) |^\infty ).
\end{gather*}

We now consider {\em almost analytic vector fields}:
\[ V = \sum_{ j=1}^m a_j ( z ) \partial_{z_j} , \ \ a_j \in C^{\rm{aa}} ( \CC^n ) , \]
which we identify with {\em real vector fields} $ \widehat V $
such that for $ u $ holomorphic $ \widehat V f = V $:
\[ \begin{split} 
\widehat V & := V + \bar V = \Re V \\
& =  \sum_{ j=1}^m  \Re a_j ( z ) ( \partial_{z_j} +
\partial_{\bar z_j } ) + i \Im a_j ( z ) ( \partial_{z_j} -
\partial_{\bar z_j } )  \\
& = \sum_{ j=1}^m  \Re a_j ( z ) \partial_{\Re z_j} +  \Im a_j ( z )  \partial_{\Im z_j} . \end{split} \]

\noindent
{\bf Example.} Suppose $ M \subset \CC^m $, $ \dim_{\RR} M = 2k $ is almost analytic. Then 
vector fields tangent to $ M $ are spanned by almost analytic vector fields, $ V_j = a_j ( z) \cdot \partial_z $, $ \partial_{\bar z }
a_j ( z )  = \mathcal O ( |\Im z |^\infty )$, $ z \in M $, $ j = 1, 
\cdots k $. In fact, using \cite[Theorem 1. {4}, 3$^{\circ}$]{mess} 
we can write  {$M$ locally near any $z\in M\cap \RR^m$} as $ \{ ( z' , h ( z' ) ) : z' \in 
\CC^k \} $, $ h = (h_{k+1}, \cdots , h_m ) : \CC^k \to \CC^{m-k} $, $ \partial_{\bar z } h =
\mathcal O ( |\Im z'|^\infty + |\Im h ( z' ) |^\infty ) $. We then 
put
\begin{equation}
\label{eq:Vj}  V_j = \partial_{z_j} + \sum_{ \ell = k+1}^m \partial_{z_j} h_\ell (z') \partial_{z_\ell} .
\end{equation}
The real vector fields $ \widehat V_j $ then span vector fields tangent to $M $. \qed

\medskip

Following \cite{mess} and \cite{Sj74} we define the (small complex time)
flow of $ V $ as follows for $ s \in \CC $, 
$ |s | \leq \delta $ 
\begin{equation} 
\label{eq:defesV}  \Phi_s  (z ) := \exp { \widehat {sV } } ( z ) .
\end{equation}
The right hand side is the flow out at time $ 1 $ of the real 
vector field $ \widehat{sV} $. Unless the coefficients in $ V $ are
holomorphic $ [ \widehat V, \widehat { i V } ] \neq 0 $ which means
that $ \exp ( s + t ) V \neq \exp sV \exp t V $ for $ s, t \in \CC $.
However, we have $ [ \widehat { i V } , \widehat V ] \sim 0 $.

\begin{lemm}
\label{l:flow1}
Suppose that $ \Gamma \in \CC^m $ is an embedded almost 
analytic submanifold and $ V $ is an almost analytic vector field. 
Assume that, 
\begin{equation}
\label{eq:ViVin} \text{ $ \widehat V $, $ \widehat {iV } $ are linearly independent and their span is transversal to $ \Gamma $,} \end{equation}
and that, 
in the notation of \eqref{eq:defesV},
\begin{equation}
\label{eq:assimP}
| \Im \Phi_t ( z ) | \geq |t|/C_K , \ \ z \in K \Subset \Gamma . 
\end{equation}
Then 
for  any $ U \Subset \CC^m $, there exists $ \delta $ such that 
\[ \Lambda  := \left\{ \exp \widehat {t V} ( \rho ) 
: \rho \in \Gamma \cap U  , \ \ |t| < \delta  , \ t \in \CC \right\}  \]
is an almost analytic manifold, $ \Lambda_\RR = \Gamma_\RR $ and  
$ \dim_{\Re} \Lambda = 2k +2$. \end{lemm}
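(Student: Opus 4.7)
The obvious candidate for $\Lambda$ is the image of the map
$$
F : (\Gamma \cap U) \times D_\delta \to \CC^m, \qquad F(\rho,t) := \exp\widehat{tV}(\rho), \quad D_\delta := \{t \in \CC : |t| < \delta\}.
$$
I would first check that $F$ is an embedding for small $\delta$: writing $t = t_1 + it_2$, one has $F(\rho,0) = \rho$, $\partial_{t_1} F|_0 = \widehat V(\rho)$ and $\partial_{t_2} F|_0 = \widehat{iV}(\rho)$, so \eqref{eq:ViVin} gives injectivity of $dF_{(\rho,0)}$ at every $\rho \in \Gamma \cap \bar U$. The inverse function theorem together with a compactness argument on $\bar U \cap \Gamma$ extends this to $|t| < \delta$ for $\delta$ small. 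Hence $\Lambda$ is a smooth embedded submanifold of real dimension $2k+2$.

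The identification $\Lambda_\RR = \Gamma_\RR$ is then immediate from the hypotheses: $F(\rho,0) = \rho$ gives $\Gamma_\RR \subset \Lambda_\RR$, and conversely if $F(\rho,t) \in \RR^m$ then \eqref{eq:assimP} forces $|t|/C_K \leq |\Im F(\rho,t)| = 0$, so $t = 0$ and the point equals $\rho \in \Gamma \cap \RR^m = \Gamma_\RR$.

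The substantive step, and the one I expect to be the main obstacle, is showing that $\Lambda$ is almost analytic. Locally near a point of $\Lambda_\RR$, the example preceding the lemma allows me to write $\Gamma = \{(z', h(z'))\}$ with $h$ almost analytic, and composing yields the parametrisation
$$
G : \CC^k_{z'} \times D_\delta \to \CC^m, \qquad G(z',t) := \exp\widehat{tV}(z', h(z')).
$$
What needs to be verified is that $G$ is an almost analytic immersion, i.e.\ $\bar\partial_{z'} G,\,\bar\partial_t G = \mathcal O(|\Im(z',t)|^\infty)$. Almost analyticity in $z'$ at $t=0$ is inherited from $h$; almost analyticity in $t$ stems from the near-commutation $[\widehat V, \widehat{iV}] \sim 0$ noted in the text, which is precisely what makes $\exp\widehat{tV}$ an almost holomorphic flow in $t$; and propagation of almost analyticity from $\{t=0\}$ to all $|t| < \delta$ follows from the general fact that the flow of an almost analytic vector field preserves almost analyticity of tangent data. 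Once $G$ is established as an almost analytic immersion, its image $\Lambda$ is an almost analytic submanifold of complex dimension $k+1$, consistent with $\dim_\Re \Lambda = 2k+2$. The delicate part is the bookkeeping of the successive $\mathcal O(|\Im\cdot|^\infty)$ errors when one commutes $\bar\partial$ through $\exp\widehat{tV}$; this is where the almost analytic machinery of Melin--Sj\"ostrand reviewed in \S\ref{s:anex} is essential.
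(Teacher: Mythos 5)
Your preliminary steps — the embedding argument via the inverse function theorem, and the identification $\Lambda_\RR = \Gamma_\RR$ via \eqref{eq:assimP} — match the paper's approach, and your treatment of $\Lambda_\RR = \Gamma_\RR$ is actually more explicit than the text. You have also correctly identified the mechanism that makes the almost analyticity work: the near-commutation $[\widehat V, \widehat{iV}] \sim 0$.

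The gap is in the last, substantive step, which you defer to ``the general fact that the flow of an almost analytic vector field preserves almost analyticity of tangent data.'' There is no such citable general fact — that assertion \emph{is} the content of the lemma, and it fails without the quantitative non-degeneracy hypothesis \eqref{eq:assimP}, which your almost-analyticity argument never invokes. The commutation $[\widehat V, \widehat{iV}]\sim 0$ together with Lemma \ref{l:geof} only yields raw errors of size $\mathcal O(|t|^\infty + |\Im z|^\infty)$ (this is the paper's computation \eqref{eq:partf}--\eqref{eq:bartphi}); converting these into the required $\mathcal O(|\Im \Phi_t(z)|^\infty)$, i.e.\ into errors vanishing to infinite order \emph{at the real locus of the image}, is precisely where \eqref{eq:assimP} combined with $\Im\Phi_t(z)=\Im z + \mathcal O(t)$ enters. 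Relatedly, your stated criterion $\bar\partial_{z'}G,\ \bar\partial_t G = \mathcal O(|\Im(z',t)|^\infty)$ is keyed to the wrong quantity: the Melin--Sj\"ostrand conditions must be measured against $|\Im G(z',t)|$ (distance to $\Lambda_\RR$), not the imaginary part of the parameter, and as written your bound is strictly stronger than what the hypotheses give.

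The second omission is the treatment of the tangent directions coming from $\Gamma$ itself. You address $\partial_t G$ and $\bar\partial_t G$ via the flow, but the vectors $d\Phi_t(z)X$, $X\in T_z\Gamma$, also need to be shown to span an almost complex subspace modulo $\mathcal O(|\Im\Phi_t(z)|^\infty)$. The paper's proof handles this by choosing the $V_j$ as in \eqref{eq:Vj}, pushing forward constant-coefficient combinations $W_c$, and proving the ``almost complex linearity'' statement \eqref{eq:almostca} using \eqref{eq:pushbyexp}, $[\widehat V, \widehat{iV}]\sim 0$, and again \eqref{eq:assimP}. This second half of the argument is where most of the work lies and your proposal does not address it; acknowledging the bookkeeping as ``delicate'' does not substitute for carrying it out.
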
 

We will use the following geometric lemma:
\begin{lemm}
\label{l:geof}
Suppose $ Z_j \in \CI ( \RR^m ; T^* \RR^m ) $, $ j = 1, \cdots , J $, are smooth vector fields
and, for $ s \in \RR^J $, 
\[ \langle s, Z \rangle := \sum_{j=1}^J s_j Z_j \in \CI ( \RR^m ; T^* \RR^m ) .\]
Then for $ f \in \CI ( \RR^m ) $
\begin{equation}
\label{eq:pullbyexp}
f ( e^{ \langle s , Z \rangle} ( \rho ) ) =
\sum_{ p=1}^{P} \frac{1}{p!} ( \langle s, Z \rangle)^k f ( \rho ) + 
\mathcal O_K ( |s|^{P+1} ), \ \ \rho \in K \Subset \RR^m . 
\end{equation}
while for $ Y \in \CI ( \RR^m ; T^* \RR^m ) $, 
\begin{equation}
\label{eq:pushbyexp}
e^{ \langle s , Z \rangle }_* Y ( \rho ) = 
\sum_{p=1}
^{P} \frac{1}{p!} \ad_{ \langle s, Z \rangle}^k Y ( \rho ) + 
\mathcal O_K ( |s|^{P+1} ), \ \ \rho \in K \Subset \RR^m .
\end{equation}
\end{lemm}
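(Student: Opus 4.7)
\medskip
\noindent
\textbf{Proof proposal.} The strategy is to reduce both identities to Taylor's theorem in a single real parameter $t$ applied to the time-$t$ flow of the vector field $X := \langle s, Z \rangle$, since $e^{\langle s, Z \rangle}(\rho) = \phi_1^X(\rho)$ where $\phi_t^X$ denotes that flow. The remainder estimates will then acquire the factor $|s|^{P+1}$ automatically because every appearance of $X$ carries a factor of $s$.

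For \eqref{eq:pullbyexp}, I would define $g(t) := f(\phi_t^X(\rho))$ for $t$ in a neighbourhood of $[0,1]$. Since $X\in C^\infty$ and $K$ is compact, standard ODE theory gives a neighbourhood $U$ of $K$ and a $\delta>0$ so that $\phi_t^X(\rho)$ exists and stays in a fixed compact set for $|s|\leq\delta$, $t\in[0,1]$, $\rho\in K$. A routine induction using the defining relation $\frac{d}{dt}\phi_t^X = X\circ\phi_t^X$ yields $g^{(k)}(t) = (X^k f)(\phi_t^X(\rho))$. Applying Taylor's theorem with integral remainder between $0$ and $1$ gives
\[
 f(e^{\langle s,Z\rangle}(\rho)) = \sum_{p=0}^{P}\frac{1}{p!}(\langle s,Z\rangle^p f)(\rho) + \frac{1}{P!}\int_0^1 (1-t)^{P}(\langle s,Z\rangle^{P+1}f)(\phi_t^X(\rho))\,dt,
\]
and since $\langle s,Z\rangle^{P+1}f$ is a polynomial of degree $P+1$ in the $s_j$ with coefficients that are smooth functions of $\rho$, the remainder is $\mathcal O_K(|s|^{P+1})$ uniformly in $\rho\in K$.

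For \eqref{eq:pushbyexp}, I would similarly set $H(t) := (\phi_t^X)_* Y$ and use the standard identity
\[
 \frac{d}{dt}(\phi_t^X)_* Y = (\phi_t^X)_*[X,Y],
\]
which follows from differentiating the definition of pushforward along the flow. An induction then gives $\frac{d^k}{dt^k}H(t)\big|_{t=0} = \ad_X^k Y$ (or the sign convention used by the authors; the argument is identical up to signs). Taylor's theorem at $t=1$ around $t=0$ yields
\[
 (e^{\langle s,Z\rangle})_* Y(\rho) = \sum_{p=0}^{P}\frac{1}{p!}\ad_{\langle s,Z\rangle}^{p}Y(\rho) + \frac{1}{P!}\int_0^1(1-t)^{P}(\phi_t^X)_*\bigl(\ad_X^{P+1}Y\bigr)(\rho)\,dt.
\]
The integrand is smooth in $(t,s,\rho)$ and each iterated commutator $\ad_{\langle s,Z\rangle}^{P+1}Y$ is a polynomial of degree $P+1$ in $s$ with smooth vector-field coefficients, giving the bound $\mathcal O_K(|s|^{P+1})$.

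The only non-trivial ingredient is verifying uniformity of the remainder on compact sets; this reduces to the standard fact that the flow map $(t,s,\rho)\mapsto \phi_t^{\langle s,Z\rangle}(\rho)$ is $C^\infty$ jointly in all variables where it is defined, which is guaranteed by smooth dependence of ODE solutions on parameters. The signs and the precise starting index of the sum (the statement reads $p=1$ but the formula should include the $p=0$ term, which is $f(\rho)$ respectively $Y(\rho)$) will be fixed to conform with the authors' convention.
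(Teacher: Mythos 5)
Your argument is correct and is exactly the standard Taylor-expansion-along-the-flow proof that the cited reference (Jean, Appendix A) carries out; the paper itself supplies no proof, only the citation, so there is nothing to contrast against. You also rightly flag the typos in the statement (the sums should start at $p=0$ and the exponent should be $p$, not $k$). One genuine slip: the identity you wrote, $\tfrac{d}{dt}(\phi^X_t)_* Y = (\phi^X_t)_*[X,Y]$, has the wrong sign. Since $(\phi^X_t)_* = (\phi^X_{-t})^*$ and $\tfrac{d}{dt}(\phi^X_t)^* Y = (\phi^X_t)^*\mathcal L_X Y$, one gets $\tfrac{d}{dt}(\phi^X_t)_* Y = -(\phi^X_t)_*[X,Y]$, so the correct expansion reads $(\phi^X_1)_*Y(\rho)=\sum_{p=0}^P \tfrac{(-1)^p}{p!}\ad_X^p Y(\rho)+\mathcal O_K(|s|^{P+1})$ (or with $\ad_X Y:=[Y,X]$ to absorb the sign). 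You anticipated this by deferring to the authors' sign convention, and it does not affect the remainder bound, but the flow identity as written would not survive a direct check.
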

For a proof see for instance \cite[Appendix A]{fred}. We recall that
$ F_* Y ( F ( \rho ) ) := dF ( \rho ) Y ( \rho ) $.


\begin{proof}[Proof of Lemma \ref{l:flow1}] 
Let $ \iota : \Gamma \hookrightarrow \CC^m $ the inclusion
map. Then 
\[  \partial  \exp ( t_1  \widehat { V } + t_2  \, \widehat { i V } )  \circ \iota (\rho ) 
: T_{(0,\rho)} (\RR^2_t \times \Gamma ) \to T_{\rho} \CC^{m} \]
is given by 
$ ( T, X ) \mapsto T_1 \widehat{  V } + T_2 \widehat { i V } + \iota_* X , $
which, thanks  {to} our assumptions, is surjective onto a $ 2k +2 $ (real) dimensional subspace of $ T^* \CC^m $. Hence, by the implicit function theorem $ \Lambda $ is a $ 2k+2 $ dimensional embedded submanifold of $ \CC^m $. 

To fix ideas we start with the simplest case of $
\Gamma = \{ 0 \} \subset \CC^n $. In that case
$ \{ \Lambda = \{ \Phi_t ( 0 ) : t \in \CC , |t| < \delta \}$, and 
from our assumption $ |\Im \Phi_t ( 0 )| \sim |t_1 \widehat V + 
t_2 \widehat { i V } | \sim | t | $.
The tangent space is given by 
\[ T_{ \Phi_t ( 0 ) } \Lambda = \{ \partial_t \Phi_t ( 0 ) 
T + \partial_{\bar t } \Phi_t ( 0 ) \bar T : T \in \CC \}
\subset \CC^2  .\]
If we show that  
\begin{equation}
\label{eq:parttP} \partial_{\bar t }\Phi_t ( 0 ) = \mathcal O ( |t|^\infty) 
\end{equation} then  
$ d (  T_{ \Phi_t ( 0 ) } \Lambda , i  T_{ \Phi_t ( 0 ) } \Lambda ) = 
\mathcal O ( t^\infty ) $ and almost analyticity of $ \Lambda $ 
follows from \cite[Theorem 1.4, 1$^\circ$]{mess}. 
The estimate \eqref{eq:parttP} will follow from showing that for
any  
holomorphic function $ f $, $  {\partial_{t_1}^{\alpha_1}\partial_{t_2}^{\alpha_2}}\partial_{\bar t} f ( \Phi_t ( 0 ) ) |_{t= 0 } = 0 $. But this follows from 
\eqref{eq:pullbyexp} and the fact that $ [ \widehat V , \widehat { i V } ] \sim 0 $ at $ 0$. Indeed, 
\begin{equation}
\label{eq:partf}  \begin{split} 
 {\partial_{t_1}^{\alpha_1}\partial_{t_2}^{\alpha_2}} \partial_{\bar t} f (\Phi_t ( 0 ) ) |_{t=0} & = 
 {\partial_{t_1}^{\alpha_1}\partial_{t_2}^{\alpha_2}}  \partial_{\bar t } \left( \sum_{k=0}^\infty \frac{1}{k!}
\left( t_1 \widehat V + t_2 \widehat { iV } \right)^k f (0 ) \right)|_{t=0} \\
& =    {\partial_{t_1}^{\alpha_1}\partial_{t_2}^{\alpha_2}}
\left( \sum_{k=0}^\infty \frac{1}{k!}
\left( t_1 \widehat V + t_2 \widehat { iV } \right)^k 
( \widehat V + i  \widehat { i V } )f (0 ) \right)|_{t = 0 } 
\\
& = \widehat V^{\alpha_1 } \widehat { i V}^{\alpha_2} ( 
\widehat V + i \, \widehat { i V } ) f ( 0 ) = 
\widehat V^{\alpha_1 } \widehat { i V}^{\alpha_2}  ( V - V ) f ( 0 ) = 0.
\end{split}
\end{equation} 
The fact that $ \widehat V $ and $ \widehat {i V }$ commute to infinite 
order at $ 0 $ was crucial in this calculation. Holomorphy of $ f $
was used to have $ \widehat W f = W f $. 

We now move the general case. For $ z \in \Gamma $, 
$ T_{\Phi_t ( z ) } \Lambda $ is spanned by 
\begin{equation}
\label{eq:TanPht}   \partial_t \Phi_t ( z ) T + \partial_{\bar t} \Phi_t ( z ) \bar T , \ T \in \CC, \ \ 
d \Phi_t ( z) X , \ \  X \in T_z \Gamma . \end{equation}
We can repeat the calculation \eqref{eq:partf} with $ 0 $ replaced by 
$ z $ to see that, using the assumption \eqref{eq:assimP} and
the fact that $ \Im \Phi_t ( z ) = \Im z + \mathcal O ( t )  $, 
\begin{equation} 
\label{eq:bartphi} \partial_{\bar t } \Phi ( z ) = \mathcal O ( |t|^\infty + 
|\Im z |^\infty ) = \mathcal O ( |\Im \Phi_t ( z ) |^\infty ) . 
\end{equation}
To consider $ d \Phi_t ( z ) X = (\Phi_t)_* Y ( \Phi_t ( z ) )  $ we choose a vector field 
tangent to $ \Gamma $, $ Y $, $ Y_c ( z ) = X $. We choose 
\begin{equation}
\label{eq:YcWc} 
 Y_c  = \widehat 
W_c , \ \ \  W_c = \sum_{ j=1}^k c_j V_j , \ \  c \in \CC^k ,
\end{equation}  
a constant coefficient linear combination of vector fields \eqref{eq:Vj}. 
Then $ d \Phi_t ( z ) X = (\Phi_t)_* Y_c ( \Phi_t ( z ) ) $ and we 
want to show that
\begin{equation}
\label{eq:almostca}   c \mapsto (\Phi_t)_* Y_c ( \Phi_t ( z ) )  \ 
\text{ is complex linear modulo errors  
$\mathcal O ( |\Im \Phi_t ( z ) |^\infty ) $.} \end{equation}
 In view of \eqref{eq:TanPht}
that shows that $ d ( T_{ \Phi_t ( z ) } \Lambda , i T_{\Phi_t ( z ) }
\Lambda ) = 
\mathcal O ( |\Im \Phi_t ( z ) |^\infty ) $ and from 
\cite[Theorem 1.4, 1$^\circ$]{mess} we conclude that $ \Lambda $ is
almost analytic.

To establish \eqref{eq:almostca} we use \eqref{eq:pushbyexp}
with $ \langle s , X \rangle = s_1 \widehat V + s_2 \widehat{i V } $, $ s_1 = \Re t$, $ s_2 = \Im t $. Since $ [ \widehat V , 
\widehat {i V} ] \sim 0 $  and $ \widehat V \sim \widehat { i V }/i $
at $ \Im w = 0 $, we see that
\begin{equation}
\label{eq:Phit}  (\Phi_t )_* Y_c ( w ) = \sum_{ p=0}^\infty \frac {t^p} {p!} 
{\ad_{\widehat V}^p W_c } ( w ) + \mathcal O ( |t|^{K+1} +
|\Im w |^\infty ) . \end{equation}
Because of the form of $ W_c $ (see \eqref{eq:Vj} and \eqref{eq:YcWc}) 
\[ \ad_{\widehat V}^p W_c ( w) = \widehat {\ad_{V}^p W_c } ( w ) 
+ \mathcal O ( |\Im w' |^\infty + |\Im h(w')|^\infty ) , \]
and 
\[ c \mapsto  {\ad_{V}^p W_c } ( w )  \ \text{ is complex linear}. \] 
Since $ w = \Phi_t ( z ) $, $ z \in \Gamma $, 
\[ \begin{split}  | \Im w' | + | \Im h ( w') | & = \mathcal O ( |\Im z' | + | \Im h ( z' ) | + |t| ) \\
& =  
\mathcal O  ( | \Im z | + |t | ) = 
\mathcal O  ( | \Im w | + |t | ) = \mathcal O ( | \Im w| ), 
\end{split} \]
since $ |\Im  w| = |\Im \Phi_t ( z ) | \geq |t|/C $. Combining this estimates with \eqref{eq:Phit} gives \eqref{eq:almostca}.
\end{proof}

\subsection{Quasimodes and a positivity condition}
We make the same assumptions on $ p \in 
\mathscr S $ as above but assume in addition that at $ ( x_0, \xi_0 )$, $ p ( x_0, \xi_0 ) = 0 $ and 
$ \{ \Re p , \Im p \} ( x_0 , \xi_0 ) < 0 $. 
We want to show that there exists $ u ( h ) \in \CIc ( \RR^n ) $ such that for $ P = P ( x, \xi , h ) = p ( x, \xi ) + \mathcal O ( h)_{  \mathscr S} $, 
\begin{equation}
\label{eq:uofh} 
P ( x, h  D , h ) u = \mathcal O ( h^\infty )_{L^2 }, \ \ 
\WFh ( u ) = ( x_0 , \xi_0 ) , \ \ \| u \|_{L^2} = 1 . \end{equation}
 (See \cite[12.5]{zw} for a different argument based on a semiclassical adaptation of the construction of
 Duistermaat--Sj\"ostrand.) The assumption that  $ p \in \mathscr S ( \RR^{2n} ) $ is made for convenience only: the construction is (micro)local in phase space.

\subsubsection{Eikonal equation}
\label{ss:eik}
 {Fix $\tilde{p}$ an almost analytic extension of $p$.} We proceed as follows. Assume that $ (x_0, \xi_0 ) = 
( 0 , 0 ) $ and write $ p ( x, \xi ) = a ( x, \xi ) + i b ( x, \xi ) 
+ \mathcal O ( |x|^2+ |\xi|^2 ) $, $ a, b $ real valued and linear. 
Since $ \{ a , b \} = - c^2 < 0 $, the linear version of 
Darboux's theorem \cite[Theorem 21.1.3]{H3} shows that
there exists a {\em linear} symplectic change of variables $ \kappa ( y , \eta) 
= ( x, \xi ) $  {(preserving $T^*\mathbb{R}^n$)} such that 
$$
 \kappa^* a = c\eta_1  +\mathcal O(|\eta|^2+|y|^2),\qquad \kappa^* b = - c y_1 +\mathcal O(|\eta|^2+|y|^2) .$$ 
 We now switch to coordinates $ ( y , \eta ) $
and we denote them again by $ ( x, \xi ) $. Writing $ p $ for 
$ \kappa^* p $  {and $\tilde{p}$ for $\kappa^*\tilde{p}$}, we obtain, 
\begin{equation}
\label{eq:linnof}  p ( 0, 0 ) = 0 ,  \ \  p ( x, \xi ) = c ( \xi_1 - i x_1 ) + 
\mathcal O ( |x|^2+ |\xi|^2 ) . \end{equation}
For $ s \in \CC^{n-1} $, small there exists $ \zeta_1 ( s ) $ such
that 
\[  {\tilde{p}} ( (0 , s ), ( \zeta_1 ( s ), i  s ) ) =0 , \ \ 
\zeta_1 ( 0 ) = 0 , \ \ \partial_s \zeta_1 ( 0 ) = 0 , \ \ 
\ \ \partial_{\bar s } 
\zeta_1 ( s )  = \mathcal O ( |\Im s |^\infty ) , \ \ \alpha > 0 . \ \ 
 \]
We put $ \Lambda_0 := \{ ( ( 0 ,  s ) , ( \zeta_1 ( s ) , i s ) ) \} $
and then, in the notation of \eqref{eq:defesV} we define
\[  \Lambda = \{ \exp \widehat{ t H_{ {\tilde{p}}} }  ( \rho )   : \rho \in \Lambda_0 , \ 
t \in \CC,  \ |t| < \epsilon \} \subset T^* \CC^n .
  \]
To check that $ \Lambda $ is an almost analytic Lagrangian submanifold of
$ T^* \CC^n $ we use Lemma \ref{l:flow1}. The transversality 
condition \eqref{eq:ViVin} follows immediately form \eqref{eq:linnof}
and it remains to 
check \eqref{eq:assimP}. For that we note that with $ t = t_1 + it_2 $
(and recalling that $  {\zeta}_1 ( s ) =  \mathcal O ( |s|^2 ) $, 
\[  \begin{split} \Im \Phi_t ( ( 0  , s ) , ( \zeta_1 ( s )  , i s ) )) & = 
\left(  t_2 c  , 
\alpha \Im  s  , 
 c t_1  , -\Re s  \right) 
+ 
\mathcal O ( |t|^2 + |s|^2 ) . \end{split} \]
Hence, we obtain \eqref{eq:assimP}:  
\[ \begin{split} | \Im \Phi_t ( (  0 ,  s ) , ( \zeta_1 ( s )  , i s ) ))|
& \geq  c( |t_1| + |t_2| ) +   |s| - 
\mathcal O ( |t|^2 + |s|^2 ) \\
&  \geq |t|/C+ {|s|/C} , \ \ \ |s| \ll 1. \end{split} \]

We now claim that
$ \Lambda $ is positive in the sense that for 
\begin{equation}
\label{eq:pos1} \tfrac 1i \sigma ( X , \bar X ) 
\geq c |X|^2  , \ \  X \in T_{(0,0)}^* \Lambda 
\subset T_{(0,0)}^* \CC^n . \end{equation} 
(Here $ \sigma $ is the symplectic form \eqref{eq:compsymp}.) In fact, 
vectors in $ T_{(0,0)}^* \Lambda  $ are given by 
\begin{equation} 
\label{eq:tan2La} X = ( ( T , S) , 
( i T , i S ))  , \ S \in \CC^{n-1}, \ 
T \in \CC,  \end{equation}
from which \eqref{eq:pos1} follows. 

We now note that the (real) linear transformation $ \kappa $
extends to a complex linear transformation on $ \CC^n \times \CC^n $
and we can go back to the original coordinates $ ( x, \xi ) $ 
by taking the almost analytic Lagrangian manifold 
$ \kappa ( \Lambda ) $. We also note that the positivity condition 
\eqref{eq:pos1} is invariant under linear symplectic transformations
which are real when restricted to $ \RR^n \times \RR^n $ (as then 
$ \kappa ( \bar X ) = \overline \kappa ( X ) $). Hence $ \kappa ( \Lambda ) $ is an almost analytic positive Lagrangian and 
we now denote it by $ \Lambda $.

From \eqref{eq:tan2La} we see that $ \pi_* : T_{(0,0)} \Lambda \to 
T_0 \CC^n $ is onto and hence we have an almost analytic generating function, that is $ \Psi ( z) $, 
\[ \partial_{\bar z } \Psi = \mathcal O ( 
|\Im z |^\infty + | \Im \Psi ( z) |^\infty )  \]
 such that, as almost analytic manifolds, 
\begin{equation}
\label{eq:paraLa} \Lambda \sim \{  ( z , \Psi_z ( z ) ) : |z| < \epsilon \}  , \ \ 
\Psi_z ( 0 ) = 0 . \end{equation}
\begin{proof}[Proof of \eqref{eq:paraLa}]
Since $ \Lambda $ is a.a. Lagrangian, we have 
$ \sigma|_\Lambda \sim 0 $ (vanishes to infinite order at $ \Lambda_\RR $) while the projection property shows that,
near $ z = 0 $, $ \Lambda = \{ ( z, \zeta ( z ) ) : z \in \CC^n \}$, 
$ \zeta ( 0 ) = 0 $.
Hence $ d ( \zeta ( z ) d z ) \sim 0 $ and (see \cite[Theorem 1.4, 3$^\circ$]{mess})
\[ \partial_{\bar z } \zeta ( z ) = \mathcal O ( |\Im z |^\infty + 
|\Im \zeta ( z ) |^\infty ) .
\]
We note that for $ z =x \in \RR^n $, the strict positivity at
$ \Lambda_\RR = \{ ( 0 , 0 )\} $ shows that 
\begin{equation}
\label{eq:imzeta}
| x|/C \leq | \Im \zeta ( x ) | \leq C |x|, \ \ x \in \RR^n, \ |x| < 
\epsilon . \end{equation}

We now see that 
\[  0 \sim \sigma|_\Lambda = \sum_{j=1}^n \partial_z \zeta_j ( z ) \wedge 
d z_j + \mathcal O ( | \Im z|^\infty + |\Im \zeta ( z ) |^\infty )_{
C^\infty ( \CC^n ;\wedge^{2n} \CC^n )} , \]
and in view of \eqref{eq:imzeta} 
\[  \partial_{z_k} \zeta_j ( x ) - \partial_{z_j} \zeta_k ( x ) 
= \mathcal O ( | x |^\infty ) , \ \ x \in \RR^n, \ |x| < 
\epsilon . \]
For $ x \in \RR^n $  define $ \Psi $ by the simplest version of the Poincar\'e lemma:
\[  \Psi ( x ) = \int_0^1 \zeta ( t x ) \cdot x dt .\]
Then
 \begin{equation}
\label{eq:aaPoin} \begin{split} \partial_{x_j} \Psi  ( x ) & =  \int_0^1 \left(\sum_{k=1}^n t z_k \partial_{x_j} \zeta_k ( t x )  + \zeta_j ( t x ) \right) dt \\
& =  \int_0^1\left( \sum_{k=1}^n t z_k \partial_{x_k} \zeta_j ( t x )  
+  \zeta_j ( t x ) \right) dt + \mathcal O ( |x|^\infty  ) \\
& = \int_0^1 \partial_t ( t \zeta_j ( t x ) ) dt + 
 \mathcal O ( |x|^\infty )  = \zeta_j ( x ) + \mathcal O ( | \Im \zeta ( x ) | ^\infty) ,
\end{split} 
\end{equation}
in the last argument we used \eqref{eq:imzeta} again. 
We now define $ \Psi ( z ) $ as an almost analytic extension of 
$ \Psi $. From \cite[Proposition 1.7(ii)]{mess} we obtain \eqref{eq:paraLa}. 
\end{proof}

The strict positivity of $ \Lambda $ implies that $ \Im \Psi_{xx} (  0 ) $
is positive definite: 
\begin{gather*}  T_{ (0,0) } \{ z , \Psi_z ( z ) \} = 
\{ ( Z, \Psi_{xx} ( 0 ) Z ) : Z \in \CC^n \}, \\
 \Im \langle \Psi_{xx}( 0 )  Z, \bar Z \rangle  = \tfrac 1 i \sigma ( ( Z, \Psi_{xx} ( 0 ) Z ), ( \bar Z , \overline \Psi_{xx}, Z )) 
 \geq c | Z|^2 . 
 \end{gather*}
 The eikonal equation is satisfied in the following sense:
for $ z \in \CC^n $, $ |z| < \epsilon $, 
\begin{equation}
\label{eq:aaeik}
\widetilde p ( z, \Psi_z ) = \mathcal O ( |\Im z |^\infty + 
|\Im \Psi_z|^\infty ) = \mathcal O ( | \Im z |^\infty  + 
|\Im \Psi |^\infty) , 
\end{equation}
(We can replace $ \Im \Psi_z $ with $ \Im \Psi $ as 
for $ \Im z = 0 $, $ \Im \Psi \geq 0 $ and hence 
$ |\Im \Psi_x | \leq C | \Im \Psi |^{\frac12} $.)

\begin{proof}[Proof of \eqref{eq:aaeik}]
We have for 
 $ s \in \CC $, 
\begin{align*}
 \widehat{s H_{ {\tilde{p}} }} \widetilde p & = 
\overline{s \partial_\zeta \widetilde p} \cdot \partial_{\bar z}\widetilde p - \overline{s \partial_z \widetilde p} \cdot \partial_{\bar \zeta}\widetilde p = \mathcal O 
( | \Im z |^\infty +|  {\Im \zeta}|^\infty ) .
\end{align*}
Since $ \widetilde p |_{  {\Lambda_0}} = 0 $, we see that
$ \widetilde p  {(z,\Psi_z)} \sim 0 $ at $ \Lambda_\RR $.
\end{proof}

Hence to find $ u $ satisfying \eqref{eq:uofh} we take
\begin{equation}
\label{eq:ans4u}  u( x ) := e^{ i \Psi(x)/h } a ( x, h ) . 
\end{equation}
Almost analytic extension of $ a $ will make a natural appearance in the
transport equation. 

\subsubsection{Transport equations} 
\label{ss:te}

We write the amplitude as $ a = a_0 + h a_1 + \cdots $, $ a_j \in S $
and 
to find the transport equations we apply the method of com  plex 
stationary phase \cite[Theorem 2.3, p.148]{mess}  to 
\[ \begin{split} P u ( x ) & = \frac{1}{ ( 2 \pi h )^n }  \int_{ \RR^n } \int_{\RR^n} 
P ( x, \xi , h ) e^{ \frac i h ( \langle x - y , \xi \rangle + 
 \Psi ( y )  ) } a ( y, h ) dy d \xi 
\\
 & =   e^{\frac ih\Psi(x)}\left[\widetilde p ( x, \Psi_x )  a ( x, h )
+ \frac h i \left( \sum_{j=1}^k 
\partial_{\zeta_k} \widetilde p( x, \Psi_x  ) 
\partial _{x_k} + \tfrac 12 \sum_{ k = 1}^n \partial^2_{x_k \xi_k} \widetilde p( x, \Psi_x )  \right) a ( x, h ) \right. \\
& \ \ + \left. h \left(  \widetilde p_1 ( x,  
\Psi_x  )  - i \sum_{k,\ell = 1}^n 
\Psi_{x_k x_\ell} ( x ) \partial_{\xi_k \xi_\ell}^2 \widetilde p ( x, \Psi_x  )  \right) a ( x, h ) + 
\mathcal O ( h^2 )_{\mathscr S }\right].\end{split} 
\]   
The first term is estimated using \eqref{eq:aaeik} and the transport 
equation become
\begin{equation}
\label{eq:traspa}  
\begin{gathered}  V_p \widetilde a_k (z) + \tfrac12 {\rm{div}} V_p \, 
\widetilde a_k ( z) + 
i c_\Psi \widetilde a_k ( z ) = F_{k-1} ( \widetilde a_0, \cdots, \widetilde a_{k-1} ) , \ \ F_{-1} \equiv 0, \\
V_p : = (\pi_\Lambda )_*  H_{  {\tilde{p}}} = \partial_{\zeta_k} \widetilde p( x, \Psi_x  ) \partial _{z_k} , \ \ \pi_\Lambda : \Lambda 
= \{ ( z, \Psi_z (  z) )\} \to \CC^n , \\
c_\Psi ( z  ) := 
\widetilde p_1( z, \Psi_z ) - i \sum_{k,\ell = 1}^n 
\Psi_{z_k z_\ell} ( z ) \partial_{\zeta_k \zeta_\ell}^2 \widetilde p ( z, \Psi_z  ) .
\end{gathered}
\end{equation}
We now solve these equations using the ``almost analytic" flow of 
$ V_p $:
\begin{equation}
\label{eq:propztw} 
\begin{gathered}  z ( t, w') := \exp ( \widehat {t V_p } ) ( 0 , w' ) , \ \ |\Im  { [z ( t , w' )-w']}| \sim |t | , \\
w' \in B_{\CC^{n-1} } ( 0 , \epsilon) ,  
 \ \ t \in \CC, \ \ |t| < \epsilon . 
\end{gathered}
\end{equation}
So, for instance,
\[  \widetilde a_0 ( z  ) := \exp g_0 ( z  ), \ \ 
g_0 ( z ( t, w')) := 
 - \int_0^1 t b_0 ( z ( ts, w' ) )ds, \ \  b_0 := \tfrac12 {\rm{div}} V_p + i c_\Psi . \]
We now calculate the action of $ V_p $ on the $ g_0 ( z ( t,w'))$
using almost analyticity of $ b_0 $ and the properties of 
$ z ( t, w' ) $ in \eqref{eq:propztw}: 
{\begin{align*}
V_pg_0(z(t,w'))&=-\int_0^1 \sum_{k=0}^\infty \frac{s^k}{k!}V_p\widehat{tV_p}^ktb_0(z(0,w'))+\mathcal O(|t|^\infty)ds\\
&=-\int_0^1 \sum_{k=0}^\infty \frac{s^k}{k!}\widehat{tV_p}^{k+1}b_0(z(0,w'))+\mathcal O(|t|^\infty)+\mathcal O(|\Im z(0,w')|^\infty)ds\\
&= -\sum_{k=0}^\infty \frac{1}{(k+1)!}\widehat{tV_p}^{k+1}b_0(z(0,w'))+\mathcal O(|t|^\infty+O(|\Im z(0,w')|^\infty)\\
&= -b_0(z(t,w'))+\mathcal O(|t|^\infty+|\Im z(0,w')|^\infty)\\
&=-b_0(z(t,w'))+\mathcal O(|\Im \Psi|^\infty+|\Im z|^\infty).
\end{align*}}
 This gives \eqref{eq:traspa} with $ k = 0 $.
Similarly we obtain solutions to the remaining transport equations.
We obtain $ a $ by taking an asymptotic sum and multiplying it by 
$ \chi ( x ) $ where $ \chi \in \CIc ( B_{\RR^n } ( 0, \epsilon ) $,
$ \chi \equiv 1 $ near $ 0 $. 
Then returning to \eqref{eq:ans4u} we see that
\begin{equation}
\label{eq:comqm} 
\begin{gathered}
  P ( x, h D_x , h ) ( e^{ i \Psi ( x ) /h } a ( x, h ) ) = 
\mathcal O ( h^\infty +  {e^{-|\Im \Psi(x)|/Ch}}|\Im \Psi ( x ) |^\infty )_{ \CIc } 
= \mathcal O ( h^\infty )_{ \CIc } , \\
\WFh \left( e^{ i \Psi ( x ) /h } a ( x, h ) \right) = \{ (0 , 0 ) \}.
\end{gathered}
\end{equation}

\section{Projector in the case of deformations}
\label{s:projdefo}

We now present a version of \cite[\S 2]{Sj96} in the case of 
the usual FBI transform on $ \RR^n $. It is based on 
deformation of $ T^* \RR^n$ to a I-Lagrangian,  $ \RR$-symplectic submanifold of $ T^* \CC^n $.  In \S \ref{s:wvsd} we will show that this approach, described in \S\S \ref{s:we},\ref{s:psew},
is equivalent to the approach using 
weights.

The FBI transform and weights used in \cite[\S 2]{Sj96} are different 
from the ones used in \cite{M} and \S \ref{s:we}. The procedure of \cite{Sj96}, and earlier of \cite{HS}, involves deformation of $ T u ( x, \xi ) $ to an
I-Lagrangian, $ \RR$-symplectic submanifold of $ \CC^{2n} $:
\begin{equation}
\label{eq:LaG} \Lambda = \Lambda_G := 
\{ ( x + i \partial_\xi G ( x, \xi) , \xi - i \partial_x G ( x, \xi ):  ( x, \xi ) \in 
\RR^{2n} \},\end{equation}
 where $ G \in \CIc ( \RR^{2n} ) $ is assumed to be small in 
$ C^2 $. This means that for the symplectic form \eqref{eq:compsymp} 
on $ \CC^{2n} = T^* \CC^n $ we have
\[ \Im \sigma |_\Lambda = 0 , \ \  \sigma_\Lambda := \Re \sigma |_{\Lambda} \ \text{is non-degenerate}. \]
Smallness of $ G $ is needed for the second property. We also note that
$ \Lambda $ is a (maximally) totally real submanifold of $ \CC^{2n} \simeq \RR^{4n} $,
$ T_\rho \Lambda \cap i T_\rho \Lambda = \{ 0 \} $.

We parametrize $ \Lambda $ by $ ( x, \xi ) $ using \eqref{eq:LaG} and 
define
\begin{equation}
\label{eq:TLa}  T_\Lambda u ( x, \xi ) = T u ( x + i G_\xi ( x, \xi ) , 
\xi - i G_x ( x, \xi ) ) . \end{equation}

A natural weight associated to $ G $ is given by $ H ( x, \xi )$ satisfying
\begin{equation}
\label{eq:defH}   d_{x,\xi}  H = - \Im  \zeta \cdot dz |_{ \Lambda } . 
\end{equation}
Since 
\[  -\Im \zeta \cdot dz |_{\Lambda } = - \Im ( \xi - i G_x ) d ( x + i G_\xi)
= (G_x - (\xi \cdot G_{\xi} )_x  )\cdot dx - ( G_{\xi\xi}\xi) \cdot d\xi , \]
$ H $ is given by (we choose $ H = 0$ for $ G = 0 $)
\begin{equation}
\label{eq:formH} H ( x, \xi ) = G ( x, \xi ) - \xi \cdot G_\xi ( x, \xi) . \end{equation}
\begin{lemm}
\label{l:TSLa}
For $ u \in \mathscr S ( \RR^{2n} )  $ define  
\begin{equation}
\begin{gathered}
\label{eq:SLa}  S_\Lambda u ( y ) := \bar c h^{ -\frac{3 n} 4 } \int_{ \RR^{2n}}
e^{ \frac i h ( \langle y - x - i G_\xi , \xi - i G_x  \rangle + 
\frac i 2 ( x + i G_\xi - y )^2 ) } b ( x, \xi ) u ( x, \xi ) d x d \xi,
\\
b ( x, \xi ) d x \wedge d \xi = d ( \xi - i G_x ) \wedge d ( x + i 
G_\xi ) .
\end{gathered}
\end{equation}
Then 
\begin{equation}
\label{eq:SLaT}  S_\Lambda T_\Lambda  v = v , \ \ v \in L^2 ( \RR^n ) , \end{equation}
and 
\begin{equation}
\label{eq:TLaS} T_\Lambda S_\Lambda = \mathcal O ( 1 ) : L^2_\Lambda \to L^2_\Lambda, 
\ \ \ L^2_\Lambda :=  L^2  ( \RR^{2n}, 
e^{ - 2 H /h } dx d \xi )  . \end{equation}
\end{lemm}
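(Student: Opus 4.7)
The plan is to treat the two assertions separately. The common preliminary is that, by construction, $b(x,\xi)\,dx\wedge d\xi$ is exactly the pullback of $d\zeta\wedge dz$ under the parametrization $(x,\xi)\mapsto (x+iG_\xi,\xi-iG_x)$ of $\Lambda$; hence
\[
S_\Lambda u(y)=\int_{\Lambda}\bar c\,h^{-3n/4}\,e^{\frac{i}{h}[\langle y-z,\zeta\rangle+\frac{i}{2}(z-y)^2]}\,u\,d\zeta\wedge dz,
\]
with $u$ pushed to $\Lambda$ via the parametrization. This identification is what allows us to view $S_\Lambda$ as a true contour integral of a holomorphic form.

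For \eqref{eq:SLaT}, I will deform contours. Take $v\in\mathscr S(\RR^n)$, so that $Tv$ extends to an entire function on $\CC^{2n}_{(z,\zeta)}$ with Gaussian decay. Then the integrand above times $Tv(z,\zeta)$ becomes a holomorphic $(2n,0)$-form on $\CC^{2n}$, which is automatically $d$-closed ($\bar\partial=0$ by holomorphy, $\partial=0$ since $\CC^{2n}$ has no nonzero $(2n+1,0)$-form). Because $G\in C^\infty_c$, the submanifolds $\Lambda$ and $T^*\RR^n$ coincide outside a compact set, and the family $\{\Lambda_{tG}\}_{t\in[0,1]}$ interpolates between them via a $(2n+1)$-chain with compactly supported ``side''. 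Stokes's theorem then gives $\int_\Lambda=\int_{T^*\RR^n}$, i.e., $S_\Lambda T_\Lambda v(y)=S(Tv)(y)=T^*Tv(y)=v(y)$ by Proposition~\ref{p:TTs}. The identity extends to $v\in L^2(\RR^n)$ by density, once the boundedness of $T_\Lambda$ is known (from the next step).

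For \eqref{eq:TLaS}, compose the two operators and perform the exact Gaussian $y$-integration to obtain the Schwartz kernel of $T_\Lambda S_\Lambda$ with respect to $d\beta$:
\[
K(\alpha,\beta)=c_0 h^{-n}\,e^{\frac{i}{h}\psi_0(\alpha_\Lambda,\beta_\Lambda)}\,b(\beta),
\]
where $\psi_0$ is the polynomial of Proposition~\ref{p:TTs} analytically continued and $\alpha_\Lambda,\beta_\Lambda$ denote the images of $\alpha,\beta$ in $\Lambda$. Under the unitary $L^2(d\alpha)\ni f\mapsto e^{H/h}f\in L^2_\Lambda$, boundedness reduces to Schur's test on $L^2(d\alpha)$ for the conjugated kernel $e^{-H(\alpha)/h}K(\alpha,\beta)e^{H(\beta)/h}$, equivalently, to the bound
\[
-\Im\psi_0(\alpha_\Lambda,\beta_\Lambda)+H(\beta)-H(\alpha)\leq -c\,|\alpha-\beta|^2.
\]
At $\alpha=\beta$ both sides vanish, since $\psi_0(\alpha_\Lambda,\alpha_\Lambda)=0$. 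The first-order part cancels by the defining relation \eqref{eq:defH}: using $(\partial_{z'}\psi_0,\partial_{\zeta'}\psi_0)|_{z'=z,\zeta'=\zeta}=(-\zeta,0)$, the chain rule gives $\partial_\beta[-\Im\psi_0]|_{\beta=\alpha}=\Im(\zeta\cdot\partial_\beta z_\Lambda)|_{\alpha}=-dH(\alpha)$, which cancels $\partial_\beta H|_{\beta=\alpha}=dH(\alpha)$. The Hessian in $\beta$ is then a $\|G\|_{C^2}$-perturbation of its value at $G=0$ (namely $-\tfrac12 I$), hence negative definite for $\|G\|_{C^2}$ small. Globally, $\Im\psi_0(\alpha_\Lambda,\beta_\Lambda)\geq\tfrac{1}{4}|\alpha-\beta|^2-O(\|G\|_{C^2})|\alpha-\beta|^2$ together with the boundedness of $H$ (compact support of $G$) yields the estimate at infinity, and Schur's test closes the argument.

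The main obstacle is the first-order cancellation in the Schur estimate: it is precisely the geometric compatibility \eqref{eq:defH} between the deformation $\Lambda$ and the weight $H$ which makes the analytically continued kernel bounded, and it is this point which foreshadows the equivalence with the weighted formulation in Theorem~\ref{th:G2ph}.
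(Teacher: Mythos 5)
Your proof is correct and follows essentially the same route as the paper's: for \eqref{eq:SLaT} both arguments deform the contour from $\Lambda_G$ back to $T^*\RR^n$ using that $b\,dx\wedge d\xi=d\zeta\wedge dz$ and that $G$ has compact support (you phrase this as Stokes's theorem for the $d$-closed holomorphic $(2n,0)$-form, which is the same contour shift), and for \eqref{eq:TLaS} both compute the phase of $T_\Lambda S_\Lambda$ by the exact Gaussian $y$-integral as $\psi_0$ evaluated on $\Lambda\times\Lambda$, verify via \eqref{eq:defH} the first-order cancellation of $-\Im\psi_0$ against $H(\beta)-H(\alpha)$ on the diagonal, and close with the quadratic lower bound (smallness of $\|G\|_{C^2}$) and Schur's test.
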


\noindent
{\bf Remark.} The weight $ H $ defined by \eqref{eq:defH} is precisely the unique weight (up to an additive constant) for which \eqref{eq:TLaS} holds -- see \eqref{eq:dImpsi} in the proof below.

\begin{proof}
To prove \eqref{eq:SLaT} we write out the composition and deform the 
contour. The phase in the composition is given by
\[   \langle \xi - i G_x , y - y' \rangle + \tfrac i 2 \left( 
(x + i G_x - y)^2 + ( x + i G_x - y' )^2 \right). \]
If 
$ z = x + iG_\xi ( x, \xi ) $, $ \zeta = \xi - i G_x ( x, \xi ) $, 
then our choice of $ b $ shows that
$  b ( x, \xi ) d x d\xi = d \zeta \wedge d z $ 
and,  by deforming the contour from $ \Lambda_G $ to $ \Lambda_0 := \RR^{2n } $ (note that $ \Lambda_G $ and $ \Lambda_0 $ coincide outside of a compact set), 
\[ \begin{split} S_\Lambda T_\Lambda u ( y ) &= 
c \bar c h^{ - \frac{3n } 2 } \int\!\!\!\int_{\Lambda_G }
e^{\frac i h ( ( y - y') \zeta + \frac i 2 ( ( y - z )^2 + ( y' - z)^2 ))
} u ( y' )  d \zeta dz dy' \\
& = 
c \bar c h^{ - \frac{3n } 2 } \int\!\!\!\int_{\Lambda_0 }
e^{\frac i h ( ( y - y') \zeta + \frac i 2 ( ( y - z )^2 + ( y' - z)^2 ))
} u ( y' )  d \zeta dz dy \\
& = T^* T u ( y ) = u ( y ). 
\end{split} \]
To prove \eqref{eq:TLaS}, we complete the squares in the phase arising in the composition 
of $  T_\Lambda S_\Lambda $ to obtain the phase
\begin{equation}
\label{eq:psiLa} 
\begin{gathered} \psi_\Lambda = 
\tfrac12 ( \langle z , \zeta \rangle -
\langle z' , \zeta' \rangle) + \tfrac 12 ( 
\langle z , \zeta' \rangle - \langle z' , \zeta\rangle ) 
+ \tfrac i 2 ( ( \zeta - \zeta')^2 + ( z - z' )^2 )) , \\
z := x + i G_\xi  ,  \ z' := x' + i G_{\xi'}, \\
\zeta := \xi - i G_{x} , \ \zeta := \xi' - i G_{x'}, \ \
\end{gathered}
\end{equation}
and where $ G_{\bullet'} : = G_{\bullet'} ( x', \xi')$. 

We calculate (noting that as $ \Lambda $ is totally real we can use 
holomorphic differentials by taking almost analytic extensions),
$  d \Im \psi_\Lambda  = 
( \partial + \bar \partial) \Im \psi_\Lambda = 
\Im \partial \psi_\Lambda $,
where $ d $ denote the differential with respect to $ (x,\xi,x',\xi') $
and $ \partial $ the holomorphic differential with respect to $ (z,\zeta,z',\zeta') $.
Using the expression above and restricting to $ z = z' $ and $ \zeta = \zeta' $ we see that
\begin{equation}
\label{eq:dImpsi}  d \Im \psi_\Lambda |_{ (x,\xi ) = ( x',\xi')}   = \Im ( \zeta dz - \zeta dz ' ) =
\left( - d_{x,\xi} {H} + d_{x',\xi'} H \right)|_{ ( x , \xi) = ( x',\xi')} . 
\end{equation}
This means that
\begin{equation}
\label{eq:ImpsiLa}  \Im \psi_\Lambda = - H ( x, \xi ) + \mathcal O ( ( x - x')^2 + 
(\xi - \xi' )^2 ) + H ( x', \xi' ) , \end{equation}
and as $ G $ (and $ H$ ) are small in $ C^2 $ the comparison with the 
case $ G = H =  0 $ gives
\[ \Im \psi_\Lambda = - H ( x , \xi ) + 
( \tfrac12 - \mathcal O ( \| G \|_{C^2} ) ) ( ( \xi - \xi')^2  + 
( x - x')^2 ) + H ( x', \xi' ) . \]
The Schur criterion now gives the boundedness in \eqref{eq:TLaS}.
\end{proof}

\noindent
{\bf Remark.} A more pedestrian way of seeing 
\eqref{eq:ImpsiLa} follows from a direct calculation and from using the formula \eqref{eq:formH}:
\[  \begin{split} 2 \Im \psi_\Lambda & 
= \xi G_\xi - \xi' G_{\xi'} - x G_x + x' G_{x'} 
+ \xi' G_\xi - x G_{x'} + x' G_x - \xi G_{\xi'} \\
&  \ \ \ + 
( \xi - \xi')^2 - ( G_x - G'_x )^2 + 
( x - x')^2 - ( G_\xi - G_{\xi'})^2  \\
& = 2 \xi G_\xi - 2 \xi' G_{\xi} + 2 (\xi' - \xi) G_\xi +
2(x' -x  ) G_{x }  
\\
& \ \ \ 
+ (\xi' - \xi) ( G_{\xi'} - G_\xi) + 
( x' - x ) ( G_{x'} - G_x ) 
 \\
&  \ \ \ 
+ 
( \xi - \xi')^2 - ( G_x - G'_x )^2 + 
( x - x')^2 - ( G_\xi - G_{\xi'})^2  \\
& =  - 2 G ( x, \xi ) + 2 \xi G ( x, \xi) + 2 G  ( x', \xi' ) - 
2 \xi' G_{\xi'} ( x', \xi' ) 
\\
&  \ \ \ + ( 1  - \mathcal O ( \| G \|_{C^2 }  )
( ( \xi - \xi')^2  + 
( x - x')^2 ) \\
& = - 2 H ( x, \xi) + 2 H  ( x',\xi' ) + ( 1  - \mathcal O ( \| G \|_{C^2 }  )
( ( \xi - \xi')^2  + 
( x - x')^2 ) .
\end{split}
\]

We now move to construct the orthogonal projector 
\begin{equation}
\label{eq:projLa} \Pi_\Lambda ( L^2_\Lambda) = T_\Lambda ( L^2 ( \RR^n ) ) , \ \ 
\Pi_\Lambda^{*, H} = \Pi_\Lambda, \ \ \Pi_\Lambda^2 = \Pi_\Lambda , 
\end{equation}
and describe its structure. That is done similarly to the construction 
of $ \Pi_\varphi $ in \S \ref{s:we}. The complication comes from a more
involved form of the operators $ \zeta_j $ which requires the use of the almost analytic methods reviewed in \S \ref{s:anex}.

We start by defining operators which annihilate the deformed FBI transform. We first recall that the holomorphic extension of $ T$ 
satisfies
\[ Z_j  T \equiv 0, \ \ 
Z_j = h D_{z_j} - \zeta_j - 
 i h D_{\zeta_j } . \]
Hence, 
\begin{equation}
\label{eq:zetajL}
\begin{gathered}
 Z_j^\Lambda ( x, \xi , h D_x, h D_\xi ) T_\Lambda \equiv 0, \ \ 
  Z_j^\Lambda ( x, \xi, h D_x, h D_\xi ) := \left( h D_{z_j} - \zeta_j - 
  i h D_{\zeta_j } \right)|_{ \Lambda } , 
\end{gathered} 
  \end{equation}
where
\[ \begin{bmatrix} hD_{z }|_\Lambda \\ h D_{\zeta}|_\Lambda 
\end{bmatrix} 
= \begin{bmatrix} I + i  G_{x \xi} ( x, \xi ) & i G_{ \xi\xi} ( x, \xi) 
\\
- i G_{xx} ( x, \xi ) & I - i G_{ \xi, x } ( x, \xi ) \end{bmatrix}^{-1}
\begin{bmatrix} h D_{x} \\ h D_{\xi} \end{bmatrix} . \]
 Since $ Z_j $'s commute we have (with $ \alpha =  ( x, \xi ) $)
$ [ Z_j^\Lambda ( \alpha, h D_\alpha ) , Z_k^\Lambda ( \alpha, h D_\alpha ) ] = 0 $.

We now repeat the construction outlined in \cite{Sj96} and presented in the slightly simpler setting in \S \ref{s:we}. 
Again, the argument proceeds in the following
steps:

\begin{itemize}

\item construction of a uniformly bounded operator (as $ h \to 0 $) $ B_\Lambda : L^2_\Lambda \to L^2_\Lambda $ such that $ Z_j^{\Lambda} B_\Lambda = \mathcal O ( h^\infty)_{L^2_\Lambda \to L^2_\Lambda }  $,
$ B_\Lambda^{*, H} = B_\Lambda $ and
$ B^2_\Lambda = B_\Lambda + \mathcal O ( h^\infty )_{ L^2_\Lambda \to L^2_\Lambda}$;

\item characterization of the {\em unique} properties of the Schwartz kernel of 
$ B_\Lambda $: uniqueness of the phase and the determination of the amplitude from its restriction to the diagonal;

\item finding a projector $ P_\lambda = \mathcal O ( 1)_{ L^2_\Lambda \to 
L^2_\Lambda } $ onto the image of $ T_\Lambda $. 

\item choosing $ f \in S ( 1 )  $, $ f \geq 1/C $ so that 
$ A := P_\Lambda M_f P_\Lambda^{*, H}  $ (in the notation of \S \ref{s:FBI}), satisfies 
$ A = B_\Lambda + \mathcal O ( h^\infty )_{ L^2_\Lambda\to L^2_\Lambda }$; this relies on the uniqueness properties in the construction of $ B_\Lambda $;

\item expressing $ \Pi_\Lambda $ as a suitable contour integral of the resolvent of $ A $ 
and using it to show that $ \Pi_\Lambda  = B_\Lambda + \mathcal O ( h^\infty )_{ L^2_\Lambda \to L^2_\Lambda }$.
\end{itemize}

To construct $ B_\Lambda $ we postulate an ansatz
\begin{equation}
\label{eq:kerBLa}
\begin{gathered}  B_\Lambda u ( \alpha ) = h^{-n} \int_{T^* \TT^n}  
e^{ i  \psi ( \alpha, \beta )/h -   2 H ( \beta )/h } a ( \alpha, \beta ) u ( \beta ) d m_\Lambda ( \beta)  ,  \\ 
d m_\Lambda ( \beta ) := (\sigma|_\Lambda)^n / n! = 
d \alpha ,  \ \beta = \Re \alpha , \ \ \alpha \in \Lambda ,  
\end{gathered}
\end{equation}
and as in \eqref{eq:Kab0} 
\begin{equation}
\label{eq:Kab01} \begin{split}  Z_j^\Lambda  ( \alpha, h D_\alpha ) \left( e^{ i\psi ( \alpha, \beta )/h }
a ( \alpha, \beta ) \right) & = \mathcal O ( h^\infty + | \alpha - \beta |^\infty ) , \\
\widetilde Z_j^{\Lambda}  ( \beta, h D_\beta ) \left( e^{ i \psi ( \alpha, \beta )/h }
a ( \alpha, \beta ) \right) & = \mathcal O ( h^\infty + | \alpha - \beta |^\infty ) , 
\end{split} \end{equation}
where $ \widetilde Z_j^\Lambda $, $ j = 1, \cdots , n $  are defined
in \eqref{eq:defwideZ} below. The equations \eqref{eq:Kab01} are consistent with 
$  \psi ( \alpha, \beta ) = - \overline{ \psi ( \beta, \alpha ) } $and
$ a ( \alpha, \beta ) = \overline{ a ( \beta, \alpha ) }$ -- see
\S \ref{s:Laeik}.

\noindent
{\bf Notation.} Suppose $ Q $ is a differential operator with holomorphic 
coefficients defined near $ \Lambda $. We write 
\[  Q ( \alpha, h  D_\alpha ) = Q ( z , \zeta, h D_z , h D_\zeta ) ,\]
and $ \overline{ Q ( \alpha, h D_\alpha ) } $ for the corresponding
anti-holomorphic operator. The operator $ Q $ can be restricted to the 
totally real submanifold $ \Lambda $ and that restriction is
denoted by $ Q^\Lambda $. If we parametrize $ \Lambda $ by 
$ \alpha \in T^* \RR^n$ we write
$ Q^\Lambda = Q^\Lambda ( \alpha, h D_\alpha ) $. This operator then has an almost analytic extension to a neighbourhood of $ \Lambda $ and we
denote it by the same letter. We also consider the anti-holomorphic operator
$ u \mapsto \overline { Q^t \bar u } $,
\[   \int_\Lambda u ( \alpha ) [Q v] ( \alpha ) d\alpha = 
\int_\Lambda [ Q^t u ] ( \alpha ) v ( \alpha ) d \alpha ,\]
and denote its restriction of $ \Lambda $ by 
$ \bar Q^\Lambda $. The reason for this notation is the
fact that, as function on $ \Lambda \simeq T^* \RR^n $, 
\begin{equation}
\label{eq:sigLa}  \sigma ( \overline Q^\Lambda ) = 
\overline { \sigma ( Q^\Lambda ) } . 
\end{equation}
We use the same letter to denote its almost 
analytic extension to a neighbourhood of $ \Lambda $. 
We also define  $ \widetilde Q^\Lambda $, 
$ \sigma ( \widetilde Q^\Lambda ) ( \alpha, \alpha^*) = 
\overline { \sigma ( Q^\Lambda ) } ( \alpha , - \alpha^*) $.
Here $ \sigma $ refers to the semiclassical principal symbol.

We illustrate this in a simple example: $ \Lambda = \{ ( x, 
\xi - i g'(x )) : ( x, \xi ) \in \RR^2 \}$, $ g \in C^\infty 
( \RR; \RR ) $. If
$ Q = h D_z - \zeta - i hD_\zeta $ then 
\begin{gather*}
 Q^\Lambda = h D_x + i g''( x ) h D_\xi - \xi + i g'( x) - i h D_\xi , \\
 \bar Q^\Lambda = h D_x - i g'' ( x ) h D_\xi - \xi - i g'( x ) + 
 i h D_\xi ,\\
\widetilde Q^\Lambda =  - h D_x + i g''( x ) h D_\xi - \xi - {i} g'( x ) - i
h D_\xi ,
\end{gather*}
with the operators extended to a neighbourhood of $ \Lambda $ by 
taking holomorphic derivatives and an almost analytic extension of $ g $.
\qed

We note again that the weight $ H $ does not appear in 
$ \widetilde Z_j^\Lambda $. To see this
we first compute $ (Z_j^\Lambda) ^{* , H} $:
\[ \begin{split} \langle Z_j^\Lambda u , v \rangle_{L^2_\Lambda} & =
\int_{T^* \TT^n} Z_j^\Lambda ( \alpha, h D_\alpha ) u ( \alpha) 
\overline {v ( \alpha ) } e^{ - 2 H ( \alpha ) } d m_\Lambda ( \alpha ) 
\\
& = \int_{ \Lambda } Z_j ( \alpha , h D_\alpha ) 
u (\alpha ) \overline{ v ( \alpha ) } e^{ - 2 H ( \alpha ) / h }d \alpha \\
& = \int_{\Lambda } 
u ( \alpha ) ( (Z_j ( \alpha, h D_\alpha ))^t ( \overline{
v( \alpha ) } e^{ -2 H ( \alpha ) } ) d \alpha \\
& = \int_{\Lambda } 
u ( \alpha ) \overline {\left( e^{ 2 H( \alpha)/h } \overline Z_j ( \alpha, h D_\alpha )  e^{ -2 H ( \alpha )} \right)  v( \alpha ) } e^{ -2 H ( \alpha ) } ) d \alpha 
\end{split} \]
where (see the remark about notation above)
$ \overline Z_j ( \alpha, h D_\alpha ) v ( \alpha ) := \overline{ (Z_j ( \alpha, h D_\alpha )^t \overline{v ( \alpha) }} $.
Hence 
\[  \left(Z_j^{\Lambda }  ( \alpha, h D_\alpha) \right)^{*,H} 
=  e^{ 2 H( \alpha)/h } \overline Z_j^{\Lambda}  ( \alpha, h D_\alpha )  e^{ -2 H ( \alpha )} , 
\ \  \overline Z_j^{\Lambda}  = \overline Z_j |_{ \Lambda } . 
\]
We then have
 \begin{equation*}
\begin{split}   0 & \equiv ( Z_j^\Lambda B_\Lambda )^* u ( \alpha ) = B_\Lambda^* (Z_j^{\Lambda} )^* u ( \alpha ) = B_\Lambda (Z_j^{\Lambda} )^* u ( \alpha ) \\ & 
 = \int_{T^* \TT^n}  K_\Lambda ( \alpha, \beta ) e^{ -  2H ( \beta)/h }(Z_j^{\Lambda} )^* u ( \beta )
d m_\Lambda ( \beta)  \\
& =  \int_\Lambda K_\Lambda ( \alpha, \beta ) \overline Z_j ( \beta, h D_\beta ) \left( 
e^{ -  2H( \beta)/ h } u ( \beta ) \right) d  \beta  \\
& = 
\int \left ( \widetilde Z_j ( \beta,  h D_\beta ) K_\Lambda ( \alpha, \beta ) \right) 
u ( \beta ) e^{ - 2H( \beta ) /h } d \beta  \\ 
& = 
\int \widetilde Z_j^\Lambda ( \beta, h D_\beta ) K_\Lambda ( \alpha, \beta ) e^{ -2 H ( \beta ) /h } {u(\beta)} d m_\Lambda ( \beta) , 
\end{split}
\end{equation*}
where 
\begin{equation}
\label{eq:defwideZ}   \widetilde Z_j ( \beta, h D_\beta ) v ( \beta ) := 
\overline Z_j ( \beta , h D_\beta )^t   v( \beta ) = 
\overline { Z_j ( \beta , h D_\beta ) \overline{ v ( \beta ) } } ,
\ \ \ \widetilde Z_j^\Lambda := \widetilde Z_j |_{\Lambda } . 
\end{equation}
Explicitly we have
\begin{equation}
\label{eq:expzetL}
\begin{gathered}
\widetilde Z_j ( z, \zeta, h D_x, h D_\zeta ) = 
- h \bar D_{z_j} - \bar \zeta_j  
- i h \bar D_{\zeta_j } , \end{gathered} 
  \end{equation}
where
\[ \begin{bmatrix} h \bar D_{z }|_\Lambda \\ h \bar D_{\zeta}|_\Lambda 
\end{bmatrix} 
= \begin{bmatrix} I - i  G_{x \xi} ( x, \xi ) & -i G_{ \xi\xi} ( x, \xi) 
\\
 i G_{xx} ( x, \xi ) & I + i G_{ \xi, x } ( x, \xi ) \end{bmatrix}^{-1}
\begin{bmatrix} h D_{x} \\ h D_{\xi} \end{bmatrix} . \]
 Also,
$  [ \widetilde Z_j^\Lambda, \widetilde Z_k^\Lambda ] = 0 .$

\subsection{Eikonal equations}
\label{s:Laeik}

Let $ \zeta_j^\Lambda  $ and $ \widetilde \zeta_j^\Lambda $ be 
the principal symbols of $ Z_j^\Lambda $ and $ \widetilde Z_j^\Lambda$ respectively.  {The} eikonal equations we want to solve are
\begin{equation}
\label{eq:Laeik}
\zeta_j^\Lambda ( \alpha, d_\alpha \psi ( \alpha, \beta ) ) = 
\mathcal O ( | \alpha - \beta |^\infty ) , \ \ 
\widetilde \zeta_j^\Lambda ( \beta, d_\beta \psi ( \alpha, \beta ) ) 
= \mathcal O ( | \alpha - \beta |^\infty ) ,  \ \ \alpha, \beta \in \Lambda. 
\end{equation}
We recall that $ \zeta_j^\Lambda $ are restrictions to $ T^* \Lambda $ 
of holomorphic functions on $ T^* \CC^{2n} $: $ \zeta_j = 
x_j^* - \xi_j - i \xi_j^* $, $ ( x, \xi, x^* , \xi^* ) \in 
\CC^{2n} \times \CC^{2n} $. We now put 
\[ \overline{ \zeta}_j^\Lambda ( \alpha, \alpha^* ) := 
\widetilde {\zeta}_j^\Lambda ( \alpha, - \alpha^* ) \]
which is the principal symbol of $ \overline Z_j^\Lambda  $. 

From the geometric point of view, so that we remain in the same framework as in \S \ref{s:geop}, it is convenient to construct the phase 
function corresponding to 
$ B_H := e^{ - H/h } B_\Lambda e^{H /h} $. That means that
properties of $ B_\Lambda $ on $ L^2 (\Lambda) $ are equivalent to the properties of $ B_H $ on $ L^2 $, that is 
we want
\begin{equation}
\label{eq:propBH}  B_H = B^*_H , \ \  B^2_H = B_H .
\end{equation}
We have 
\[ \begin{gathered} B_H u ( \alpha) = h^{-n} \int e^{ \frac i h \psi_H ( \alpha, \beta ) }
a ( \alpha, \beta ) u ( \beta ) d \beta , \\ 
\psi_H ( \alpha, \beta ) : = i H ( \alpha ) + \psi( \alpha, \beta ) +  i H ( \beta ) .
\end{gathered} \]

To simplify the notation we first assume that $ \Lambda $ (and consequently
$ H $ defined by \eqref{eq:defH} and $ \zeta_j^H $, $ \bar \zeta_j^H $) are analytic. We will replace that by almost analyticity by proceeding
as in \S \ref{s:anex}. 

To construct $ \psi_H $ we consider $ \mathscr C_H $,  the relation associated to it: 
\begin{equation}
\label{eq:defCH}  \mathscr C_H = \{ ( \alpha, d_\alpha \psi_H ( \alpha, \beta ) , 
\beta, - d_\beta \psi_H ( \alpha, \beta ) ) : ( \alpha, \beta)  \in 
\nbhd_{\CC^{4n} } ( \Diag ( \Lambda \times \Lambda ) ) \}. \end{equation}
In view of \eqref{eq:propBH} we must have 
\begin{equation}
\label{eq:propCH}   \mathscr C_H \circ \mathscr C_H = \mathscr C_H , \ \ 
\overline{ \mathscr C}_H^t  = \mathscr C_H .\end{equation}
(Here $ \overline{ \mathscr C}_H^t := 
\{ ( \bar \rho, \bar \rho' ) : ( \rho' , \rho ) \in \mathscr C_H \} $, 
and $ \rho \mapsto \bar \rho $ is defined after the almost analytic identification of
$ \Lambda $ with $ T^* \RR^n $.)

We define
\begin{equation}
\label{eq:defzetH}   \begin{split} \zeta_j^H ( \alpha, \alpha^* ) & :=
\zeta_j^\Lambda ( \alpha, \alpha^* - i dH( \alpha ) ), 
\\  \bar \zeta_j^H  ( \alpha , \alpha^* ) & := 
\bar  \zeta_j^\Lambda  ( \alpha, \alpha^* + i dH( \alpha ) )
= \overline{ \zeta_j^H ( \bar \alpha, \bar \alpha^* ) }
 , \end{split} \end{equation}
 so that the formal analogue of \eqref{eq:Laeik}
is given by 
\begin{equation}
\label{eq:eikH} \begin{gathered} \zeta_j^H ( \alpha, d_\alpha \psi_H ( \alpha, \beta ) )= 0
, \ \ \ \bar \zeta_j^H ( \alpha, - d_\beta \psi_H ( \alpha, \beta ) )= 0.
\end{gathered}
\end{equation}
(Here again the $ \bar \alpha $ and $ \bar \alpha^* $ are defined after an identification of $ \Lambda $ with $ T^* \RR^n $). We construct $ \mathscr C_H $ geometrically -- see \S \ref{s:geop} for the simpler linear algebraic treatment in the case of the FBI transform without weights. In view of \eqref{eq:defCH} and \eqref{eq:eikH} 
we must have
\[ \begin{gathered}  \mathscr C_H \subset S \times \overline S , \ \ 
S := \{ \rho : \zeta_j^H ( \rho ) = 0 , \ \rho \in \nbhd_{ \CC^{4n}} 
( T^* \Lambda ) \} , \\ \overline S := \{ \bar \rho : \rho \in S \}=
\{ \rho : \bar \zeta_j^H ( \rho ) = 0 , \ \rho \in \nbhd_{ \CC^{4n}} 
( T^* \Lambda ) \} . \end{gathered} \]
If follows that the complex vector fields $ H_{\pi^*_L \zeta_j^H} $ and $ H_{\pi_R \bar \zeta_j^H}$
($ \pi_L ( \rho, \rho' ) := \rho$, $ \pi_R ( \rho, \rho' ) := \rho' ) $)
are tangent to $ \mathscr C_H$. By checking the case of $ T^* \Lambda = 
T^* \RR^n $ (no deformation and hence $ H \equiv 0 $) we see, as in 
\S \ref{s:geop} that $ S \cap \bar S $ is a symplectic submanifold 
(with respect to the complex symplectic form) of complex dimension 
$ 4 n $. The independence of $ H_{\zeta_k^H } $, $ H_{\bar \zeta_j^H } $
, $ j , k = 1, \cdots n $ (again easily seen in the unperturbed case)
shows that 
\[  B_{\CC^n} ( 0 , \epsilon ) \times B_{\CC^n} ( 0 , \epsilon ) 
\times ( S \cap \bar S ) \ni ( t, s, \rho)  \mapsto 
  ( \exp \langle t, H_{\zeta^H} \rangle  ( \rho ) , 
\exp \langle s , H_{ \bar \zeta_H} \rangle  ( \rho ) ) \in 
\CC^{8n} , \]
is a bi-holomorphic map to an embedded (complex) $ 4n$ dimensional
submanifold. 
This and idempotence (first condition in \eqref{eq:propCH}) imply that
\[ \mathscr C_H =  \left\{ (
\exp \langle t , H_{\zeta^H} \rangle ( \rho) , 
\exp \langle s , H_{\bar \zeta^H} \rangle ( \rho )) : \rho \in 
S \cap \bar S , \ \ t , s \in B_{ \CC^n } ( 0 , \epsilon ) \right\}  , \]
where $ \langle t, H_{\bullet^H } \rangle := 
\sum_{ k=1}^n t_k H_{\bullet_k^H } $, $ \bullet = 
\zeta, \bar \zeta $.

The second condition in \eqref{eq:propCH} is automatically satisfied
(this makes sense since $ \mathscr C_H \subset S \times \bar S $
came from demanding that $ 0 = (\zeta_j^H B )^* = B (\zeta_j^H)^* $). 
Since $ \pi : \mathscr C_H \to \nbhd_{ \CC^{4n} } ( \Lambda \times \Lambda ) $ is surjective we have have a parametrization 
given by \eqref{eq:defCH} with $ \psi_H $ determined up to an additive constant. We claim that we can choose that constant so that
\begin{equation}
\label{eq:psiH0}  \psi_H ( \alpha, \alpha ) = 0 . 
\end{equation} 
To see this we note that (from $ \mathscr C_H = \overline {\mathscr C}_H ^t $)
\[ d_\alpha \psi_H  ( \alpha , \beta ) |_{\alpha = \beta } = - \overline { d_\beta \psi_H  ( \alpha, \beta )} |_{\alpha= \beta} ,  \ \alpha 
\in \Lambda ,  \]
and hence 
\begin{equation}
\label{eq:dalphpsiH} \begin{split}  
d_\alpha ( \psi_ H(\alpha, \alpha ) ) & = 
d_\alpha \psi_H ( \alpha, \beta ) |_{\alpha = \beta } + 
d_\beta  \psi_H ( \alpha, \beta ) |_{\alpha = \beta } \\
& = 
2 i \Im d_\alpha \psi_H ( \alpha, \beta ) |_{ \alpha=  \beta } , \ \
\alpha \in \Lambda .
\end{split} \end{equation}

To find $ \Im d_\alpha \psi_H ( \alpha, \beta ) |_{ \alpha=  \beta } $
it is convenient to go to the origins of the symbols
$ \zeta_j^H $ \eqref{eq:defzetH} : $ Z_j^\Lambda $'s, with 
symbols $ \zeta_j^\Lambda $ annihilate the phase in 
$ T_\Lambda $ 
and hence
\begin{gather*} S_\alpha := S \cap T_\alpha^* \Lambda^\CC = 
\left\{ ( \alpha, d_\alpha \varphi ( \alpha, y ) 
+ i dH ( \alpha ) ) \
 :  \ y \in \CC^n \right\}, \\ 
\varphi ( \alpha, y ) := \langle z - y , \zeta \rangle + 
i ( z - y )^2 /2  , \\
z = \alpha_x + i G_\xi ( \alpha_x, \alpha_\xi ) , \ \
\zeta = \alpha_\xi - G_x ( \alpha_x , \alpha_\xi ) .
\end{gather*}
In the case $ G = 0 $  (and hence $ H = 0 $), 
$ S_\alpha  $ and $ \bar S_\alpha := \bar S \cap T^*_\alpha
\Lambda ^\CC  $ intersect transversally 
in one point and that has to remain true under perturbations. Hence we 
are looking for a solution to 
\begin{equation}
\label{eq:dalH}  d_\alpha \varphi ( \alpha, y ) + i d H ( \alpha ) = 
\overline{ d_\alpha \varphi ( \alpha, y' ) } - i d H ( \alpha ) .\end{equation} 
Now, at $ y = y' = \alpha_x $ we have 
$ d_\alpha \varphi  ( \alpha, y ) = \zeta d z|_\Lambda $
and in view of the definition of $ d H $ in \eqref{eq:defH}, 
\eqref{eq:dalH} holds. It follows that  {for $\alpha\in \Lambda$}
\[  S_\alpha \cap \bar S_\alpha = \{ ( \alpha, \Re (\zeta d z   |_\Lambda ) \} \in T^* \Lambda . \]
{Next, by analytic continuation (replaced by almost analytic continuation below),} it follows (since intersection of $ S $ and $ \bar S $ is transversal 
and we have the right dimension) that
\begin{equation}
\label{eq:ScapbarS} \mathscr J := S \cap \bar S = \{ ( \alpha  , \omega ( \alpha ) + \bar\omega (  \alpha )   ) : \alpha \in \nbhd_{ \CC^{2n}} ( \Lambda ) \} , \ \ 
\omega ( \alpha )|_\Lambda  = 
 \tfrac12 \Re ( \zeta d z |_\Lambda) , \end{equation}
  {where we recall that $\bar{\omega}(\alpha)=\overline{\omega(\bar{\alpha})}$.}
 But this shows that $ \pi^{-1} ( \diag ( \Lambda \times \Lambda ) )
\cap \mathscr C_H $ is real which means that $ \Im d_\alpha \psi_H ( \alpha , \beta ) {\big|_{\beta=\alpha}} = 0 $ for $ \alpha \in \Lambda $ showing 
that $ \psi_H ( \alpha , \alpha ) $ is  {a} constant which can be chosen be $ 0 $. This gives \eqref{eq:psiH0}.

\noindent
{\bf Remark.} Vanishing of $\Im d_\alpha \psi_H ( \alpha , \beta ) $
also shows that 
\[ - \Im \psi_H  ( \alpha, \beta ) = 
\mathcal O ( | \alpha - \beta|^2 ) \]
and since $ G $ is assumed to be small, the case of $ G= 0 $ shows that
\begin{equation}
\label{eq:HpsiH}
- \Im \psi_H ( \alpha, \beta ) 
\leq - | \alpha - \beta|^2 / C , \ \ C> 0 .
\end{equation}
This shows that $ B_H $ given by \eqref{eq:kerBLa} is bounded on 
$ L^2 $. 
\qed

We now comment on the general case and explain how to use 
almost analytic extensions off $ \Lambda $. We first identify 
$ \Lambda $ with $ T^* \RR^n $ using \eqref{eq:LaG} and extending 
$ G $ almost analytically to $ \CC^{4n} $.
We then define $ \mathscr J $ by \eqref{eq:ScapbarS} using an 
almost analytic extension of $ \omega ( \alpha)|_\Lambda $ (where
$ \bar\omega ( \alpha ) = \overline {\omega ( \bar \alpha ) } $).
We are now basically in the same situation as in \S \ref{ss:eik}, except for a larger number of vector fields, with $ \Lambda $ replaced by  $ \mathscr C_H $ and $ \Lambda_0 $  by $ \{ ( \rho, \rho ) : \rho \in \mathscr J \} $. 
Hence we define
\[ \mathscr C_H = 
\left\{ \left( \exp{ \widehat{ \langle t, H_{\zeta^H } \rangle }} (\rho), 
\exp{ \widehat {\langle s, H_{\bar \zeta^H } \rangle } } (\rho) \right) :
\rho \in \mathscr J , \ \ t, s \in B_{\CC^n } ( 0 , \epsilon ) \right\} . \]
 {Almost the} same arguments as in \S \ref{ss:eik}
show that 
{ \begin{equation}
\label{eq:sJt}  |\Im \exp{ \widehat{ \langle t, H_{\zeta^H } \rangle }} (\rho) | \geq |t|/ C , \ \ 
| \Im \exp{ \widehat {\langle s, H_{\bar \zeta^H } \rangle } } (\rho) |
\geq |s|/ C ,  \ \ \rho \in \mathscr J . \end{equation}
In fact, for $ \zeta_j := z_j^* - \zeta_j - i \zeta_j^* $ and $ 
\bar \zeta_j := z_j^* - \zeta_j + i \zeta_j^* $ we have $ \{ \zeta_j , \bar \zeta_k\} = 2 i \delta_{jk} $. 
 On $ T^* \Lambda $, 
$  \bar \zeta_k^H = \overline{\zeta_k^H} $ 
and $ \{ \zeta_j^H, \bar \zeta_k^H \}/2i   $ is positive definite. By
taking a linear combination of $ \zeta_j^H $'s we can then arrange that,
at a given point, $  \{ \zeta_j^H, \bar \zeta_k^H \}/2i  = \delta_{jk} $.
We can then make a linear
symplectic change of variables at any point of $ T^* \Lambda $ giving
new variables $  ( x, y, \xi,\eta )$, $ x, y , \xi , \eta \in \RR^n $, centered at $ 0 \in \RR^{4n} $,  
such that 
\[ \zeta_j^H = c ( \eta_j + i y_j ) + \mathcal O ( |x|^2 + |y|^2 + |\xi|^2 + |\eta|^2 )  , \ \ c >0 , \]
and this holds also for almost continuations of $ \zeta_j^H $. 
That means that near $ 0 $,  
\begin{equation}
\label{eq:mathsJ}  \mathscr J = \{ ( z, 0 , \zeta, 
0 ) + F ( z, \zeta ) ) : ( z, \zeta ) \in \nbhd_{ \CC^{2n} } ( 0 )  \} , \ \ F = \mathcal O ( |z|^2 + |\zeta|^2 ) ,
\end{equation}
We also note that for $ ( z, \zeta )\in \RR^{2n} $ (which corresponds to 
the interection with $ T^* \Lambda $), $ \mathscr J $ is real. This means
that in \eqref{eq:mathsJ}, 
\[ \Im F ( z , \zeta )  = 
\mathcal O ( ( |\Im z | + |\Im \zeta | ) ( |z| + |\zeta| ) ). \]
Hence,
\[ \begin{split} 
|\Im \exp{ \widehat{ \langle t, H_{\zeta^H } \rangle }} ( 
( z, 0 , \zeta, 
0 ) + F ( z, \zeta ) ) | & =
|( \Im z , c \Im t , \Im \zeta, c \Re t ) | \\
& \ \ \ \ + \mathcal O (  ( |\Im z | + |\Im \zeta | ) ( |z| + |\zeta| )  + |t|^2 ) 
\\ & \geq |t| /C , \ \text{ if $ |z|,|\zeta| \ll 1 $,}  \end{split}
\]  
with the corresponding estimate for $ \bar \zeta^H $. 
Lemma \ref{l:flow1} and \eqref{eq:sJt} now show the almost analyticity of $ \mathscr C_H $.} 
As in the proof of \eqref{eq:paraLa} we now obtain $ \psi_H = 
\psi_H ( \alpha, \beta ) $ such that, 
\[ d_{\bar \alpha, \bar \beta } \psi_H  ( \alpha, \beta ) = 
\mathcal O \left( d ( ( \alpha, \beta ) , \diag (\Lambda \times \Lambda ) )^\infty  
\right) . 
 \]
Restricting $ \psi_H ( \alpha, \beta ) $ to $ \Lambda \times \Lambda $ 
gives \eqref{eq:Laeik}. 


We now return to our original $ \psi $ in \eqref{eq:kerBLa}, 
$ \psi( \alpha, \beta ) = - i H ( \alpha ) + \psi_{H } ( \alpha, \beta) - i H( \beta  ) $. Our construction shows that 
\begin{equation}
\label{eq:psi2H}
\text{\eqref{eq:Laeik} holds, }   \ \psi ( \alpha, \alpha ) = - 2 i H ( \alpha ) , \  \ \psi ( \alpha, \beta ) = - \overline{ \psi ( \beta, \alpha ) },  \ \ 
\alpha, \beta  \in \Lambda . 
\end{equation}

\noindent
{\bf Remark.} Although we motivated our construction using 
the self-adjointness and idempotence properties of the operator $ B_\Lambda $ (or equivalently $ B_H $), the construction shows
that 
 $ \psi $ is uniquely determined, up to $ \mathcal O ( | \alpha - \beta|^\infty ) $, by \eqref{eq:psi2H}. \qed

We also record that
\begin{equation}
\label{eq:proppsi}
\begin{gathered}
 {\rm{c.v.}}_\beta \left( \psi ( \alpha, \beta ) + 2  i H( \beta ) + 
\psi ( \beta, \alpha ) \right) = \psi ( \alpha, \alpha ) , \\
- H ( \alpha ) - \Im \psi ( \alpha, \beta ) - H( \beta )
\leq - | \alpha - \beta|^2 / C , \ \ C> 0 .
\end{gathered}
\end{equation}

\subsection{Transport equations}
\label{s:traLa}

We now return to \eqref{eq:Kab01} and consider the transport equations
satisfied by $ a $. The analysis is similar to that in 
\S \ref{ss:te} and we start with a formal discussion (valid when 
all the objects are analytic). In view of the eikonal equations we have, 
as in \eqref{eq:traspa}, 
\[ a ( \alpha, \beta ) \sim \sum_{k=0}^\infty h^k a_k ( \alpha, \beta ) , \]
where, with $ \zeta_{j1} ^\Lambda $ the second term in the expansion of
the symbol of $ Z_j^\Lambda $, we want to solve
\begin{equation}
\label{eq:ZjLa0} \begin{gathered} 
V_j   a_k (\alpha, \beta) + c_j ( \alpha, \beta ) 
a_k ( \alpha, \beta)  = F^j_{k-1} ( a_0, \cdots, a_{k-1} ) ( \alpha, \beta)  , \ \ F^j_{-1} \equiv 0, \\
V_j := \langle V_j ( \alpha, \beta ) , \partial_\alpha \rangle, \ \ V_j (\alpha, \beta )_\ell  :=  \partial_{ \alpha_\ell ^*}  \zeta_j^\Lambda ( \alpha, d_\alpha \psi ( \alpha, \beta ) )    , \\ 
c_j ( \alpha , \beta  ) := \tfrac12 \sum_{\ell=1}^{2n}  
\partial_{\alpha_\ell}  V_j (\alpha, \beta ) + 
\zeta_{j1} ( \alpha , d_\alpha \psi ( \alpha, \beta ) {)}  \\ 
\ \ \ \ \ \ \ \ \ \ \  \ \ \ \ \ \ \ \ \ \ \ \ \ \ \ \ \ \ \ 
- \, i \sum_{k,\ell = 1}^{2n }
\partial_{\alpha_k \alpha_\ell}^2 \psi ( \alpha, \beta  ) \partial_{\alpha^*_k \alpha^*_\ell}^2 \zeta_j^\Lambda  ( \alpha , d_\alpha \psi ( \alpha, \beta  ) ) .
\end{gathered} \end{equation}
We have similar expressions coming from the applications $ \widetilde Z_j^\Lambda ( \beta , h D_\beta ) $ with 
$ V_j $, $ c_j $, $ F^j_k $ replaced by $ \widetilde V_j $, $ \widetilde 
c_j $, $ \widetilde F_k^j$, and  with the roles of $ \alpha $ and $ \beta $ switched. 
A key observation here is that $ H_{\zeta_j^\Lambda ( \alpha ) } $ and
$ H_{\bar \zeta_j^\Lambda ( \beta )  } $ are tangent to $ \mathscr C $  {and commute}
and that $ V_j $ and $ - \widetilde V_j $ are these vector fields in the parametrization of $ \mathscr C $ by $ ( \alpha, \beta ) $. Hence,
\begin{equation}
\label{eq:comVjk0}
[ V_j , V_k ] = 0 , \ \ [ V_j, \widetilde V_k ] = 0  , \ \
[ \widetilde V_k, \widetilde V_k ] = 0 . 
\end{equation}
We note that,  {for any $b(\alpha,\beta)\in S^0$},
\begin{equation}
\label{eq:ZjLa} 
\begin{split}  &  Z_j^\Lambda ( \alpha, h D_\alpha) \left( 
e^{ \frac i h \psi ( \alpha, \beta ) } b ( \alpha, \beta ) \right) 
= h e^{ \frac i h \psi ( \alpha, \beta ) }(   ( V_j   + c_j  ) b ( \alpha, \beta )  + \mathcal O ( h  ) ) , \\
&  \widetilde Z_j^\Lambda ( \beta, h D_\beta) \left( 
e^{ \frac i h \psi ( \alpha, \beta ) } b ( \alpha, \beta ) \right) 
= h e^{ \frac i h \psi ( \alpha, \beta ) } ( ( \widetilde V_j   + \widetilde c_j  ) b ( \alpha, \beta )   + \mathcal O ( h ) 
).
\end{split} 
\end{equation}
 {Moreover, solving~\eqref{eq:ZjLa0} means that}
\begin{equation}
\label{eq:ZjLa1} 
\begin{split} 
& Z_j^\Lambda ( \alpha, hD_\alpha ) \left( e^{ \frac i h \psi ( \alpha, \beta ) } \sum_{k=0}^{K-1} h^k a_k ( \alpha, \beta ) \right) =
h^{ K+1 } e^{ \frac i h \psi ( \alpha, \beta ) } F_{ {K-1}}^j ( \alpha, \beta ) , \\
& \widetilde Z_j^\Lambda ( \beta, hD_\beta ) \left( e^{ \frac i h \psi ( \alpha, \beta ) } \sum_{k=0}^{K-1} h^k a_k ( \alpha, \beta ) \right) =
h^{ K+1 } e^{ \frac i h \psi ( \alpha, \beta ) } \widetilde F_{ {K-1}}^j ( \alpha, \beta ) . \end{split}
\end{equation}
Since 
\begin{gather*}  [ Z_j^\Lambda ( \alpha, h D_\alpha ) , Z_k^\Lambda ( \alpha, h D_\alpha )] = 0 , \ \ 
[\widetilde Z_j^\Lambda ( 
\beta, h D_\beta ) , \widetilde Z_k^\Lambda ( 
\beta, h D_\beta ) ] = 0 , \\
[ Z_j^\Lambda ( \alpha, h D_\alpha ), \widetilde Z_k^\Lambda ( 
\beta, h D_\beta ) ] = 0 , \end{gather*}
we have from \eqref{eq:comVjk0} and \eqref{eq:ZjLa},
\begin{equation}
\label{eq:comVjk}  V_j c_k = V_k c_j , \ \ 
V_k \widetilde c_j = 
\widetilde V_j c_k, \ \
\widetilde V_k \widetilde c_j = \widetilde V_j \widetilde c_k . 
\end{equation}
Similarly, \eqref{eq:ZjLa1} gives 
\begin{equation}
\label{eq:comVjk1} 
\begin{gathered}  ( V_j + c_j ) F_{ {K-1}}^\ell = ( V_k + c_k ) F_{ {K-1}}^j , \ \
( \widetilde V_j + \widetilde c_j ) \widetilde F_{ {K-1}}^\ell = ( \widetilde V_k + \widetilde c_k ) \widetilde F^j_{ {K-1}} , \\
( V_j +  c_j ) \widetilde F_{ {K-1}}^\ell = ( \widetilde V_k + \widetilde c_k )  F^j_{ {K-1}} . 
\end{gathered} \end{equation}
Equations \eqref{eq:comVjk} and \eqref{eq:comVjk1} provide compatibility 
conditions for solving \eqref{eq:ZjLa0}:
\[   ( V_j + c_j ) a_k = F_{k-1}^j ,  \ \ ( \widetilde V_\ell + 
\widetilde c_\ell ) a_k = F_{k-1}^\ell , \ \ 
a_k ( \alpha, \alpha ) = b_k ( \alpha ) , \]
where  {the} $ b_k $'s  {are} prescribed. In fact, since  {the} $ V_\ell $'s and 
$ \widetilde V_j $ {'s} are independent when $ \alpha = \beta $ (as complex vectorfields), 
\[ \begin{gathered}  \CC^{2n}  \times \CC^n \times \CC^n \ni
( \rho, t , s ) \mapsto ( \alpha , \beta ) = \left( \exp 
\langle V, t \rangle  ( \rho  ) , 
\exp  \langle \widetilde V, s \rangle  (\rho ) \right) \in 
\CC^{2n} \times \CC^{2n} ,\\
\langle V, t \rangle :=  \sum_{j=1}^n t_j V_j  , \ \
\langle \widetilde V, s \rangle  := \sum_{\ell=1}^n s_j \widetilde V_\ell , 
\end{gathered} \]
is a local bi-holomorphic map onto 
of $ \nbhd_{ \CC^{4n} } ( \diag ( \Lambda \times \Lambda) ) $ (almost analytic in the general case). In view of 
this and of \eqref{eq:comVjk0}, \eqref{eq:comVjk},  {the following integrating factor}, $ g = g ( \alpha, \beta ) $,  is well defined (in the analytic case) on $ \nbhd_{ \CC^{4n} } ( \diag ( \Lambda \times \Lambda) ) $:
\[  g ( e^{ \langle V, t \rangle  }  ( \rho) , 
e^{ \langle  \widetilde V, s \rangle }  ( \rho ) ) := - 
\sum_{ j=1}^n \int_0^1 ( 
t_j c_j + s_j \widetilde c_j )|_{( \alpha, \beta) = ( e^{ \tau \langle V, t \rangle  }  ( \rho ) , 
e^{ \tau \langle  \widetilde V, s \rangle }  ( \rho ) ) } d\tau , \]
 {and satisfies}
\[ V_j g ( \alpha , \beta ) = c_j ( \alpha, \beta ) , \ \ 
 \widetilde V_j g ( \alpha , \beta ) = \widetilde c_j ( \alpha, \beta ), \ \ j =1, \cdots , n . \] 
We then define $ a_k ( \alpha, \beta ) $ inductively as follows: at
$ ( \alpha, \beta ) = 
( e^{  \langle V, t \rangle  }  ( \rho ) , 
e^{  \langle  \widetilde V, s \rangle }  ( \rho ) ) $, 
\[ \begin{split}   a_k ( \alpha, \beta ) & =  
e^{g(\alpha, \beta  ) } 
 b_k ( \rho ) \\
& + e^{g (\alpha, \beta  ) } \int_0^1 e^{-g ( \gamma, \gamma' ) } (t_j F_{k-1}^j 
( \gamma, \gamma' )  + s_j \widetilde F_{k-1}^j ( \gamma, \gamma' ) )|_{( \gamma, \gamma') = ( e^{ \tau \langle V, t \rangle  }  ( \rho ) , 
e^{ \tau \langle  \widetilde V, s \rangle }  ( \rho ) ) } d\tau .
\end{split} \]
The compatibility relations \eqref{eq:comVjk1} then show that \eqref{eq:ZjLa0} hold.

We now modify this discussion to the $ C^\infty $ case using 
almost analytic extensions as in \S \ref{ss:te} and that provides
solutions of \eqref{eq:ZjLa0} for $ ( \alpha, \beta ) 
\in \Lambda \times \Lambda $ valid to infinite order at 
$ \diag( \Lambda \times \Lambda ) $ with any initial data on the diagonal. In the time honoured tradition of \cite{HS} and \cite{Sj96} we omit the tedious details. 

Combination of \S \ref{s:Laeik} and 
\ref{s:traLa} gives \eqref{eq:Kab0} with arbitrary $ a ( \alpha, \alpha ) \sim \sum_{ k } b_k ( \alpha ) h^k $.

\subsection{Construction of the projector}
\label{s:coaLA}

We now proceed as in \S \ref{s:ams} and obtain the initial values, $ b_k ( \alpha ) $ in the construction of the amplitude. Thus let $ B_\Lambda $ be given by \eqref{eq:kerBLa} with phase and amplitude
satisfying \eqref{eq:Kab01} and \eqref{eq:HpsiH} with $ a ( \alpha, \alpha) $ to be chosen. We also note that \eqref{eq:Kab01} 
determine $ a( \alpha, \beta ) $ (up to $ \mathcal O ( | \alpha - \beta|^\infty + h^\infty )$)
from $ a ( \alpha, \alpha ) \sim \sum_{ k } b_k ( \alpha ) h^k  $. 

To find $ b_k$'s we proceed by computing the expansion of 
$ B_\Lambda^2 $ using stationary phase. Since $ Z_j ^\Lambda B_\Lambda^2 
= \mathcal O ( h^\infty )_{L^2 \to L^2 } $ and $ B_\Lambda^2 $ is 
self-adjoint, the integration kernel of $ B_\Lambda^2 $ is again determined by its values on the diagonal (for the phase see the remark after \eqref{eq:psi2H}).  {The} stationary phase argument \eqref{eq:KB21}-\eqref{eq:a2b} gives the desired $ b_k's $ and we obtain 
$ a ( \alpha, \beta ) $ with 
\[   a ( \alpha , \beta ) \sim \sum_{k=0}^\infty h^k a_k ( \alpha, \beta ) , \ \ 
| a_0( \alpha, \alpha ) | > 1/C , \]
such that $ B_\Lambda $ given by 
\eqref{eq:Kab0} satisfies
\begin{equation}
\label{eq:Kab2}
B_\Lambda^{*,H} = B_\Lambda, \ \ B_\Lambda = \mathcal O ( 1 ) : L^2_\Lambda 
\to L^2_\Lambda , \ \ B_\Lambda^2 = B_\Lambda .
\end{equation} 

We now proceed as in \S \ref{s:cotp} and show that the exact
orthogonal projector \eqref{eq:projLa} satisfies
\begin{equation}
\label{eq:B2Pi1}
\Pi_\Lambda = B_\Lambda + \mathcal O ( h^\infty )_{L^2_\Lambda \to L^2_\Lambda } .
\end{equation}
The only difference in the argument is the construction of the  
the exact projector $ P_\Lambda $:
\[ P_\Lambda ( L^2_\Lambda ( \Lambda ) ) = T_\Lambda ( L^2 ( \RR^n )) , \ \ 
P_\Lambda^2 = P_\Lambda , \ \ P_\Lambda = \mathcal O ( 1 )_{L_H ( \Lambda ) \to L_H ( \Lambda ) } . \]
But a bounded projection was already provided by Lemma \ref{l:TSLa} and in its notation we can take 
\[ P_\Lambda = T_\Lambda S_\Lambda . \]
For $ f \in S (  \Lambda ) $, $ f ( \alpha ) \sim \sum_{ k=0}^\infty 
f_k ( \alpha ) h^k$ , $ f_0 ( \alpha) > 1/C  $, we now define
\[ A_f :=  P_\Lambda f P_\Lambda^{*, H } , \ \ 
A_f u ( \alpha ) =: h^{-n} \int_\Lambda e^{ \frac i h \psi_1 ( \alpha, \beta ) } a_f ( \alpha, \beta ) u ( \beta ) e^{ - 2 H ( \beta ) /h} dm_\Lambda ( \beta ) . \]
As in \S \ref{s:cotp} we claim that $\psi_1 = \psi $ (modulo 
$ \mathcal O ( | \alpha - \beta|^\infty ) $). 
Indeed, 
since $ A_f^{ *, H} = 
 {A_f} $ and $ P_\Lambda = T_\Lambda  S_\Lambda $, 
the arguments leading to \eqref{eq:Laeik} apply and those eikonal equations hold for $ \psi_1 $ as well. Hence, $ \psi_1 $ is fully 
determined by its value on the diagonal and we find 
that using $ \psi_\Lambda $ in \eqref{eq:psiLa} and 
\eqref{eq:dImpsi} 
\[ \begin{split} \psi_1 ( \alpha, \alpha )  + 2 i H ( \alpha ) & = 
{\rm{c.v.}}_{\beta } \left( \psi_\Lambda ( \alpha, \beta ) - 
\overline{ \psi_{\Lambda } ( \alpha, \beta ) } \right) = 0    . 
\end{split} \]
But this means that \eqref{eq:psi2H} holds for $ \psi_1 $ and hence
 $ \psi_1 ( \alpha, \beta ) = \psi( \alpha, \beta ) +  \mathcal O ( |\alpha - \beta |^\infty ) $. Choosing $ f $ so that 
 $ A_f = B_\Lambda + \mathcal O ( h^\infty)_{L^2_\Lambda \to L^2_\Lambda} $
as in \S \ref{s:cotp} and arguing as in that section gives 
\eqref{eq:B2Pi1}.

\section{Pseudodifferential operators}
\label{s:pseudoL} 
We now discuss the action of pseudodifferential operators
\begin{equation}
\label{eq:Pps} P u ( x ) = \frac{1}{(2 \pi i h)^n} \int_{\RR^n}\!\int_{\RR^n} 
p ( x, \xi ) e^{ \frac{i}{h} \langle x - y , \xi \rangle}  u ( y) dy d \xi, 
\end{equation}
where $ p $ has a holomorphic extension satisfying
\begin{equation} 
\label{eq:pps}
| p ( z, \zeta ) | \leq M , \ \ | \Im z |\leq a,  \ \ | \Im \zeta | \leq b , 
\end{equation}
for some $ a, b, M > 0$. 

We start with the following
\begin{lemm}
\label{l:TPSLa}
Suppose that $ P $ is given by \eqref{eq:Pps}. Then 
\begin{equation}
\label{eq:TPSLa}
\begin{gathered}
T_\Lambda P S_\Lambda = \mathcal O ( 1 ) : L^2_\Lambda \to L^2_\Lambda , 
\ \ \ 
T_\Lambda P S_\Lambda = c_0h^{-n} \int_\Lambda 
K_P ( \alpha, \beta ) u ( \beta) d \beta , \\
K_P ( \alpha, \beta ) = e^{ \frac i h \psi_\Lambda ( \alpha, \beta ) } 
a_P ( \alpha, \beta )+r(\alpha,\beta) , \\ 
a_P ( \alpha , \beta ) \sim \sum_{ j =0}^\infty h^j a_P^j ( \alpha, \beta ) , \ \ a_P^0 ( \alpha, \alpha ) = p|_{ \Lambda } ( \alpha ) ,
\end{gathered}
\end{equation}
where $ \psi_\Lambda $ is given by \eqref{eq:psiLa} and 
$$
|r(\alpha,\beta)|\leq Ce^{-\langle \alpha -\beta\rangle/Ch}.
$$
\end{lemm}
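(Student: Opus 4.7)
The plan is to express the Schwartz kernel of $T_\Lambda P S_\Lambda$ as a single oscillatory integral and then reduce it, following the pattern of Lemma \ref{l:TPT}, via one exact Gaussian integration in an intermediate variable, followed by the complex stationary phase method in the remaining variables. Abbreviate $z = z(\alpha) := x_\alpha + iG_\xi(\alpha)$, $\zeta = \zeta(\alpha) := \xi_\alpha - iG_x(\alpha)$, and similarly $w = z(\beta)$, $\eta = \zeta(\beta)$. Combining the (trivially) holomorphic extension of the formula \eqref{eq:defbi} for $T$, the definition \eqref{eq:SLa} of $S_\Lambda$, and \eqref{eq:Pps}, the Schwartz kernel takes the form
\[
K(\alpha,\beta) = c\bar c\, h^{-3n/2}(2\pi h)^{-n}\, b(\beta) \int\!\!\!\int\!\!\!\int e^{\frac{i}{h}\Phi}\, p(x,\xi)\,dy\,dx\,d\xi,
\]
with $\Phi = \langle z-x,\zeta\rangle + \tfrac{i}{2}(z-x)^2 + \langle x-y,\xi\rangle + \langle y-w,\eta\rangle + \tfrac{i}{2}(w-y)^2$. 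Since $\Phi$ is purely quadratic in $y$ and the integrand is entire in $y$, completing the square and deforming the $y$-contour performs the $y$-integration exactly, leaving an integral over $(x,\xi)\in\RR^{2n}$ with reduced phase
\[
\Phi_1 = \langle z-x,\zeta\rangle + \tfrac{i}{2}(z-x)^2 + \langle x-w,\xi\rangle + \tfrac{i}{2}(\xi-\eta)^2
\]
and amplitude proportional to $p(x,\xi)\,b(\beta)$.

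For $|\alpha-\beta|$ smaller than a fixed constant (depending only on $a,b$ and $\|G\|_{C^2}$), I would apply the complex stationary phase method \cite[Theorem 2.3]{mess} in $(x,\xi)$. Solving $\partial_x\Phi_1 = \partial_\xi\Phi_1 = 0$ yields the critical point
\[
x_c = \tfrac{z+w}{2} + \tfrac{i(\eta-\zeta)}{2},\qquad \xi_c = \tfrac{\zeta+\eta}{2} + \tfrac{i(z-w)}{2},
\]
which is the exact analogue of the critical point in the proof of Lemma \ref{l:TPT} with $(z,\zeta,w,\eta)$ replacing $(x,\xi,x',\xi')$; the Hessian $\bigl(\begin{smallmatrix} iI & I \\ I & iI\end{smallmatrix}\bigr)$ is unchanged and non-degenerate. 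Because $G$ is small in $C^2$ and $|\alpha-\beta|$ is bounded, $(x_c,\xi_c)$ stays inside the strip $\{|\mathrm{Im}\,z|<a,\,|\mathrm{Im}\,\zeta|<b\}$ where $p$ is holomorphic, so $p(x_c,\xi_c)$ is well defined. The critical value of $\Phi_1$ equals $\psi_\Lambda(\alpha,\beta)$ from \eqref{eq:psiLa}: this matches the phase already identified for $T_\Lambda S_\Lambda$ in Lemma \ref{l:TSLa}, since replacing the constant symbol $1$ by a general analytic $p$ only modifies the amplitude. The stationary phase expansion then produces $a_P \sim \sum_j h^j a_P^j$ with $a_P^0(\alpha,\beta) = p(x_c,\xi_c)$; at $\alpha=\beta$ one has $z=w$, $\zeta=\eta$, hence $x_c=z$, $\xi_c=\zeta$, and therefore $a_P^0(\alpha,\alpha) = p(z(\alpha),\zeta(\alpha)) = p|_\Lambda(\alpha)$.

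The hard part is the off-diagonal estimate. For $|\alpha-\beta|$ larger than the width of the strip of holomorphy, the critical point $(x_c,\xi_c)$ exits that strip, so complex stationary phase cannot be carried all the way to the critical point. Instead I would deform the real contour $(x,\xi)\in\RR^{2n}$ by a bounded displacement of size $\min(\rho_0,\,c|\alpha-\beta|)$, with $\rho_0 := \min(a,b)$, chosen along a steepest-descent direction for $\mathrm{Im}\,\Phi_1$; on the resulting contour, $\mathrm{Im}\,\Phi_1$ exceeds its ``reference value'' by a quantity of order $\langle\alpha-\beta\rangle$, producing the bound $|r(\alpha,\beta)| \leq C e^{-\langle\alpha-\beta\rangle/(Ch)}$ after subtracting the (cut-off) main term. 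Finally, the boundedness $T_\Lambda P S_\Lambda = \mathcal{O}(1) : L^2_\Lambda \to L^2_\Lambda$ follows from Schur's test applied to the conjugated kernel $e^{-H(\alpha)/h} K(\alpha,\beta) e^{H(\beta)/h}$: the main term is controlled by the Gaussian bound $|e^{i\psi_\Lambda/h}|\, e^{-H(\alpha)/h + H(\beta)/h} \leq C e^{-c|\alpha-\beta|^2/h}$ from \eqref{eq:ImpsiLa}, and the remainder by its exponential off-diagonal decay (with the constant in its bound chosen small enough relative to $\|dH\|_{L^\infty}$).
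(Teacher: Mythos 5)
Your proposal is correct and follows the same overall strategy as the paper: write the kernel as an oscillatory integral in the three intermediate variables, deform/apply stationary phase near the diagonal to get the expansion with leading term $p$ at the critical point (and the critical value $\psi_\Lambda$), and use a bounded contour shift that stays inside the strip of holomorphy of $p$ to obtain the exponential off-diagonal bound; the Schur argument at the end with the conjugated kernel matches the paper's treatment as well. The one technical choice you make differently is to integrate the ``free'' intermediate variable ($y'$ in the paper's notation, your $y$) exactly as a Gaussian before touching anything else; the paper instead deforms all three contours simultaneously by a small fixed displacement $\mathcal O(\epsilon)$, namely
$\eta \mapsto \eta + i\epsilon\tfrac{y-y'}{\langle y-y'\rangle}$, $y \mapsto y + i\epsilon\tfrac{\eta-\alpha_\xi}{\langle \eta-\alpha_\xi\rangle}$, $y' \mapsto y' + i\epsilon\tfrac{\beta_\xi-\eta}{\langle \beta_\xi-\eta\rangle}$, and reads off the linear-in-$\langle\alpha-\beta\rangle$ positivity of $\Im\Phi$ directly from the resulting phase. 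Since the exact Gaussian in $y$ produces exactly the steepest-descent contribution in that variable, the two routes are equivalent, and yours has the mild advantage of reducing the stationary-phase computation near the diagonal to $2n$ rather than $3n$ variables. The one place where your write-up is noticeably thinner than the paper's is the off-diagonal estimate: you assert that shifting $(x,\xi)$ by $\min(\rho_0, c|\alpha-\beta|)$ towards the critical point raises $\Im\Phi_1$ by $\gtrsim \langle\alpha-\beta\rangle$, and that is indeed what happens for the \emph{exactly quadratic} phase $\Phi_1$ (so the estimate $\min_{u_0}\Im\Phi_1(u_0+iv)=\Im\psi_\Lambda - |v-\Im u_c|^2$ is legitimate here), but you should note, as the paper does, that the bounded error terms coming from $G$ (e.g.\ $\langle G_\xi(\alpha),\xi_\alpha\rangle - \langle G_\xi(\beta),\xi_\beta\rangle$, controlled by the compact support of $G$) force the smallness of $\|G\|_{C^1}$ to depend on the separation scale $\delta$ at which one switches between the near-diagonal and off-diagonal regimes. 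That dependency is present in the paper's proof (``choosing $\|G\|_{C^1}$ small enough depending on $\delta$'') and should be made explicit.
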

\begin{proof}
Formally,
\[  K_P ( \alpha, \beta ) = \frac{ |c|^2 } { h^{ \frac n 2 } } \frac 1 { ( 2 \pi h )^n } \int_{\RR^{2n} }\!\int_{\RR^{n}}   e^{ \frac i h ( \varphi ( \alpha, y ) +  \langle y - y' , \eta  \rangle - \varphi^* ( \beta, y'  ) ) } p ( y, \eta ) dy'd \eta dy , \]
and the critical points of the phase 
\[\begin{gathered} ( y,y',\eta ) \mapsto \varphi ( \alpha, y ) +  \langle y - y' , \eta  \rangle - \varphi^* ( \beta, y'  ) ,\\
\varphi ( \alpha, y ) = \langle \alpha_x - y , \alpha_\xi \rangle +
\tfrac i 2 ( \alpha_x -y)^2 , \ \ \ \varphi^* ( \beta , y ) = 
\overline{ \varphi ( \bar \beta, y ) } , \ \ \alpha, \beta \in \Lambda, \end{gathered} \]
are 
\[  \begin{gathered} y = y' = y_c = \tfrac 1 2 (  \alpha_x + \beta_x )
+ \tfrac i 2 ( \beta_\xi - \alpha_\xi ) , \ \ 
\eta = \eta_c = \tfrac 12 ( \alpha_\xi + \beta_\xi ) + \tfrac i 2 ( \alpha_x - \beta_x ) . \end{gathered} \]
The critical value of the phase is given by $ \psi_\Lambda $ in 
\eqref{eq:psiLa}. This gives a formal argument for \eqref{eq:TPSLa}.

To justify this, 
we first shift contours by 
$$
\eta\mapsto \eta+i\e\frac{y-y'}{\langle y-y'\rangle},
$$
which changes the phase to
\begin{align*}
&\langle \alpha_x-y,\alpha_{\xi}\rangle +\frac{i}{2}(\alpha_x-y)^2+\langle y-y',\eta\rangle +\langle y'-\beta_x,\beta_\xi\rangle +\frac{i}{2}(\beta_x-y')^2+i\e\frac{(y-y')^2}{\langle y-y'\rangle}
\end{align*}
Next, we shift contours in $ y$ and $ y'$:
$$
y\mapsto y+i\e \frac{\eta-\alpha_\xi}{\langle \eta-\alpha_\xi\rangle},\qquad y'\mapsto y'+i\e\frac{\beta_\xi-\eta}{\langle \beta_\xi-\eta\rangle }.
$$
This results in the phase
\begin{gather*}
\langle \alpha_x-y,\alpha_{\xi}\rangle +\frac{i}{2}(\alpha_x-y)^2+\langle y-y',\eta\rangle +\langle y'-\beta_x,\beta_\xi\rangle +\frac{i}{2}(\beta_x-y')^2\\
 + \, i\e\frac{(\eta-\alpha_\xi)^2}{\langle \eta-\alpha_\xi\rangle}+i\e\frac{(\eta-\beta_\xi)^2}{\langle \eta-\beta_\xi\rangle} +i\e\frac{(y-y')^2}{\langle y-y'\rangle}-\e\Big[\big\langle \alpha_x-y,\frac{\alpha_\xi-\eta}{\langle \alpha_\xi-\eta\rangle}\big\rangle+\big\langle \beta_x-y',\frac{\eta-\beta_\xi}{\langle \beta_\xi-\eta\rangle}\big\rangle\Big]\\
+ \, O\Big(\e^2\Big[\frac{|y-y|
^2}{\langle y-y'\rangle }+\frac{|\eta-\alpha_\xi|^2}{\langle \eta-\alpha_\xi\rangle^2}+\frac{|\beta_\xi-\eta|^2}{\langle \beta_\xi-\eta\rangle^2}\Big]\Big).
\end{gather*}
Therefore, choosing $\e>0$ small enough (not depending on $G$), we observe that the imaginary part of the phase satisfies
\begin{align*}
\Im \Phi&\geq \Im\left[\langle \alpha_x-y,\alpha_{\xi}\rangle +\frac{i}{2}(\alpha_x-y)^2+\langle y-y',\eta\rangle +\langle y'-\beta_x,\beta_\xi\rangle +\frac{i}{2}(\beta_x-y')^2\right]\\
&+\Im\left[i\e\frac{(\eta-\alpha_\xi)^2}{\langle \eta-\alpha_\xi\rangle}+i\e\frac{(\eta-\beta_\xi)^2}{\langle \eta-\beta_\xi\rangle} +i\e\frac{(y-y')^2}{\langle y-y'\rangle}\right]-\frac{M}{4}\e\left[ |\alpha_x-y|^2+|\beta_x-y'|^2\right]\\
&-\frac{\e}{M}\frac{|\alpha_\xi-\eta|^2}{|\langle \alpha_\xi-\eta\rangle|^2}+\frac{|\eta-\beta_\xi|^2}{|\langle \beta_\xi-\eta\rangle|^2}]-C\e^2\frac{|y-y|^2
}{\langle y-y'\rangle }\\
&\geq c|\alpha_x-y|^2+c|\beta_x-y'|^2+c\e|\eta-\alpha_\xi|+c\e |\eta-\beta_\xi|+c\e|y-y'|-C\|G\|_{C^1}.
\end{align*}
In the last line we have used that $G$ is compactly supported to see that 
$$
|\Im \left( \langle \alpha_x,\alpha_\xi\rangle-\langle \beta_x,\beta_\xi\rangle \right) |\leq C\|G\|_{C^1}.
$$

Now, suppose that $|\alpha-\beta|>\delta$.  Then, 
$$|\alpha_x-y|+|\beta_x-y'|+|y-y'|+|\alpha_\xi-\eta|+|\beta_\xi-\eta|>\delta$$
 and, choosing $\|G\|_{C^1}$ small enough depending on $\delta$, the integral is controlled by $Ce^{-\langle \alpha-\beta\rangle /Ch}$.

In particular, we have, modulo an acceptable error,
\[  K_P ( \alpha, \beta ) = \frac{ |c|^2 } { h^{ \frac n 2 } } \frac 1 { ( 2 \pi h )^n } \int_{\RR^{2n} }\!\int_{\RR^{n}}  e^{ \frac i h ( \varphi ( \alpha, y ) +  \langle y - y' , \eta  \rangle - \varphi^* ( \beta, y'  ) ) } p ( y, \eta )\chi(\delta^{-1}|\alpha-\beta|) \,dy'd \eta dy \]
where $\chi\in C_c^\infty(-2,2)$, $\chi\equiv 1$ on $[-1,1]$.

Since we are now working in a small neighborhood of the diagonal, the contour shift,
\begin{gather*}
y\mapsto y+y_c(\alpha,\beta)\qquad y'\mapsto y'+y_c(\alpha,\beta),\qquad
\eta\mapsto \eta+\eta_c(\alpha,\beta)
\end{gather*}
is justified. The phase after this contour shift is given by
$$
\frac{i}{4}[(\alpha_\xi-\beta_\xi)^2+(\alpha_x-\beta_x)^2]+\frac{1}{2}(\alpha_x-\beta_x,\alpha_\xi+\beta_\xi)+\frac{i}{2}y^2+\frac{i}{2}y'^2+\langle y-y',\eta\rangle.
$$
The stationary point of the phase is now at $y=y'=\eta=0$ and the imaginary part of the phase is always larger than at the critical point. Therefore, we may apply the method of steepest decent 
to obtain the expansion in \eqref{eq:TPSLa}.
\end{proof}

We now proceed as in the proof of Theorem \ref{t:2} to obtain
\begin{theo}
\label{t:3}
Suppose that $ P$ is given by \eqref{eq:pseudo1} where the symbol $ p $
enjoys a holomorphic extension satisfying
\begin{equation*} 
| p ( z, \zeta ) | \leq M , \ \ | \Im z |\leq a,  \ \ | \Im \zeta | \leq b .
\end{equation*}
For $ G \in C^\infty_{\rm{c}} ( T^* \RR^n ) $ with 
$ \| B \|_{ C^2 } $ sufficiently small we define 
\[ \Lambda = \Lambda_G := \{ ( x + i G_\xi ( x, \xi ) , \xi - 
i G_x ( x, \xi ) ) : ( x, \xi ) \in T^* \RR^n ) , \ \ 
L^2_\Lambda := L^2 ( \Lambda , e^{ - 2 H ( \alpha )/h} d \alpha ), \]
where $ H $ is given by \eqref{eq:formH}.  Let $ T_\Lambda u := 
T u |_\Lambda $ (see \eqref{eq:TLa}). 

Then, for $ u , v \in L^2 ( \RR^n ) $, 
\begin{equation}
\label{eq:t2}
\langle T P u , T v \rangle_{ L^2_\Lambda } 
= \langle M_{P_\Lambda} T u , T v \rangle_{ L^2_\Lambda } + 
\langle R_\Lambda T u , T v \rangle_{ L^2_\Lambda } , \end{equation}
where $R_\Lambda= \mathcal O ( h^\infty )_{ L^2_\Lambda \to L^2_\Lambda } $ and 
\[ \begin{gathered} 
 P_\Lambda ( z, \zeta , h ) = p |_\Lambda ( z, \zeta) + h p_\Lambda ^1 ( z, \zeta ) + \cdots, \ \ \ ( z, \zeta ) \in \Lambda . 
\end{gathered} \]
\end{theo}


\section{Weights vs. deformations}
\label{s:wvsd}

To show that the approaches of \S \ref{s:we} and \S \ref{s:projdefo} 
are the same, 
we want 
 to find $ \varphi = \varphi ( x, \xi ) \in \CIc ( \RR^{2n} )$ such that 
\begin{equation}
\label{eq:TSbd}   T S_\Lambda = \mathcal O ( 1 ) =  L^2_\Lambda \to L^2_\varphi , \ \ 
T_\Lambda S = \mathcal O ( 1 ) : L^2_\varphi \to L^2_\Lambda . 
\end{equation}
Let $ \varphi_G $ be the phase in $ T_\Lambda $ and 
$ \widetilde \varphi_G $ be the phase in $ S_\Lambda $. We need 
\begin{equation}
\label{eq:condph}
\begin{gathered}
\varphi ( x, \xi ) = \varphi_{\rm{max}} ( x , \xi ) = \varphi_{\rm{min}} ( x, \xi) , \\
\varphi_{\rm{max}}  ( x, \xi ) := \max_{ (x',\xi') \in \RR^{2n} } \left(
- \Im {\rm{c.v}}_y ( \varphi_0 ( x, \xi, y ) + \widetilde \varphi_G 
( x', \xi', y) )  {+} H ( x', \xi' ) \right)  \\
 \varphi_{\rm{min}} ( x, \xi ) :=  \min_{ (x',\xi') \in \RR^{2n} } 
 \left (  {-}H ( x', \xi' ) + \Im {\rm{c.v}}_y ( \widetilde \varphi_0 ( x, \xi, y ) +  \varphi_G  ( x', \xi', y) ) \right) .
 \end{gathered}
 \end{equation}
We start by noting that 
\[ \varphi_G ( x, \xi , y ) = \Phi ( z, \zeta, y ) |_{ ( z, \zeta ) \in \Lambda_G } , \ \ 
\Phi ( z, \zeta, y ) = ( z - y ) \zeta + \tfrac i 2 ( z - y )^2 , \]
and that 
\[  \widetilde \varphi ( x, \xi, y )  = - \bar \Phi ( z, \zeta, y ) |_{ ( z, \zeta ) \in \Lambda_G } , \ \ \bar \Phi ( z, \zeta ) = 
\overline{ ( \bar z , \bar \zeta ) }. \]
The critical value of $ y \mapsto \Phi ( z , \zeta, y ) - \bar \Phi ( x, \xi, y ) $
is given by 
\[  y_{\rm{c}} = y_{\rm{c}} ( x, \xi, z , \zeta ) = \tfrac 12 ( x + z + i ( \xi - \zeta )) ,\]
while the critical value of
$ y \mapsto \Phi ( x, \xi, y ) - \bar \Phi ( x, z, \zeta ) $
is given by 
\[ \bar  y_{\rm{c}} = \bar y_{\rm{c}} ( x, \xi, z , \zeta ) = \tfrac 12 ( x + z + i ( \zeta - \xi )) . \]
To find the maximum in \eqref{eq:condph} we { first note that   with $ z = x' + i G_{\xi'} $ and $ \zeta = \xi' - i G_{x'} $, 
\[ \begin{split} \Im \bar \Phi ( x', \xi', y_c ) & = 
\tfrac12 \Im \left( i ( \xi - \zeta ) \xi' - i ( x'- \tfrac 12 ( 
x + z + i ( \zeta - \xi ) ) )^2 \right) \\
& = - \tfrac1 4 ( 3  (\xi')^2 + (x')^2 ) + \mathcal O ( \langle \xi' \rangle + 
\langle x' \rangle ) \to - \infty , \ \  ( x',\xi') \to \infty. 
\end{split} 
\]
Hence, 
$$
- \Im  [\Phi ( x, \xi, y_c ) - 
\bar \Phi ( x' , \xi', y_c )]+H(x',\xi')\to -\infty,\qquad (x',\xi')\to \infty
$$}
We then calculate (again with 
$ z = x' + i G_{\xi'} $ and $ \zeta = \xi' - i G_{x'} $)
\[  \begin{split}
& d_{x',\xi'} ( - \Im {\rm{c.v.}}_y ( \Phi ( x, \xi, y ) - 
\bar \Phi ( z , \zeta, y ) )  = 
 \Im \partial_{z, \zeta } \bar \Phi ( z, \zeta,y   ) 
 |_{ y = \bar y _{\rm{c}}} 
   \end{split} .\]
Since $ d_{x',\xi' } H = - \Im \zeta dz |_{ \Lambda } $ this means that
the critical $ z , \zeta $ are given by solving
\[ \Im ( \partial_{z,\zeta} \bar \Phi ( z, \zeta, y )|_{ y = \bar y_{\rm{c}}}
 {-} \zeta dz ) |_{ \Lambda } = 0 .\]
For the minimum we similarly obtain
\[ \Im ( \partial_{z, \zeta}  \Phi ( z, \zeta , y ) |_{ y =  y_{\rm{c}}}
  {+} \zeta dz ) |_{\Lambda } = 0 .\]
This shows that the critical  {points} 
\[  (z_{\rm{c}} , \zeta_{\rm{c}} ) = (z_{\rm{c}} ( x, \xi) , \zeta_{\rm{c}} ( x, \xi ) ) ,\]
are the same for the maximum and minimum at \eqref{eq:condph}.
The maxima and minima are non-degenerate as that is the case 
when $ G = 0 $ and hence holds for $ \| G \|_{ C^2 } $ sufficiently small.

We now need to show that the critical values $ \varphi_{\max} $
and $\varphi_{\min} $ are also equal. For that we compute the differentials:
\[ \begin{split} d_{x,\xi} \varphi_{\rm{max}}  ( x , \xi) & = 
-\Im d_{x,\xi} \Phi ( x, \xi, y ) |_{ y= \bar y_{\rm{c}} ( 
x, \xi, z_{\rm{c}}, \zeta_{\rm{c}} )} 
 =  \Im d_{x, \xi} \bar \Phi ( x, \xi, y ) |_{ y = y_{\rm{c}}  ( 
x, \xi, z_{\rm{c}}, \zeta_{\rm{c}} )} \\
& = d_{x,\xi} \varphi_{\rm{min}} ( x, \xi ) . 
\end{split}\]
Hence $\varphi_{\rm{max}} $ and $ \varphi_{\rm{min}} $ differ by a constant. Since $ G $ and $ H $ vanish outside a compact set
the critical values are both $ 0 $ when $ H = G = 0 $, 
we conclude the the constant is equal to $ 0 $. This gives us

\begin{theo}
\label{th:G2ph}
There exist
$ \epsilon_0 $, $ C_0 $ such that if $ G \in \CIc ( \RR^{2n} ) $ and $ \| G \|_{C^2} < \epsilon_0 $ then
\begin{gather*}  \| T_\Lambda  v \|_{ L^2_\Lambda } /C_0 \leq \| T v \|_{ 
L^2_\varphi } \leq 
C_0 \| T_\Lambda v \|_{ L^2_\Lambda }, \ \ v \in L^2 ( \RR^n ) , \\
L^2_\Lambda := L^2 ( \Lambda, e^{ -2H( x, \xi)/h} dxd\xi) , \ \
L^2_\varphi  := L^2 ( T^* \RR^n , e^{-\varphi( x, \xi )/h} dx d\xi ), 
\end{gather*}
where $ \Lambda $, $ T_\Lambda $ are given in \eqref{eq:LaG},\eqref{eq:TLa}, $ H $ is defined by \eqref{eq:defH}, and 
$ \varphi $ is given (implicitely) by \eqref{eq:condph}.
\end{theo}
\begin{proof}
We have shown that for $ \varphi $ given by \eqref{eq:condph} we have 
\eqref{eq:TSbd}. Hence,
\[  \| T_\Lambda v \|_{ L^2_\Lambda } = 
\| T_\Lambda S T v \|_{L^2_\Lambda } \leq \| T_\Lambda S \|_{ L^2_\varphi
\to L^2_\Lambda } \| T v \|_{L^2_\varphi } \leq C_0 \| T v \|_{L^2_\varphi }, \]
with the other bound derived similarly.
\end{proof}


\end{document}